\newtheorem{theorem}{Theorem}[section]
\newtheorem{proposition}[theorem]{Proposition}
\newtheorem{lemma}[theorem]{Lemma}
\newtheorem{corollary}[theorem]{Corollary}
\newtheorem{definition}[theorem]{Definition}
\newtheorem{remark}[theorem]{Remark}
\newcommand{\I}{\Delta^s_\infty}
\newcommand{\R}{\mathbb R}
\newcommand{\N}{\mathcal N}
\begin{document}
\title{Non-Local Tug-of-War and 
the Infinity Fractional Laplacian}
\author{C. Bjorland, L. Caffarelli, A. Figalli}

\begin{abstract} 
Motivated by the ``tug-of-war'' game studied in \cite{MR2449057},
we consider a ``non-local'' version of the game which goes as follows: at every step two players pick respectively a direction
and then, instead of flipping a coin in order to decide which direction to choose and then moving of a fixed amount $\epsilon>0$ (as is done in the classical case),
it is a $s$-stable Levy process which chooses at the same time both the direction and the distance to travel. Starting from this game,
we heuristically we derive a deterministic non-local integro-differential equation that we call ``infinity fractional Laplacian''. 
We study existence, uniqueness, and regularity, both for the Dirichlet problem and for a double obstacle problem, both problems having a natural interpretation as ``tug-of-war'' games.
\end{abstract}
\maketitle

\section{Introduction}
Recently Peres et al., \cite{MR2449057}, introduced and studied a class of two-player differential games called ``tug-of-war''.  Roughly, the game is played by two
players whose turns alternate based on a coin flip.  The game is played in some set $\Omega$ with a payoff function $f$ defined on $\partial\Omega$.
A token is initially placed at a point $x_0 \in \Omega$.
Then, on each turn, the player is allowed to move the token to any point in an open ball of size $\epsilon$ around the current position.
 If the players move takes the token to a point $x_f\in \partial\Omega$ then the game is stopped and the players are awarded or penalized by the payoff function $f(x_f)$.
In the limit $\epsilon\rightarrow 0$, the value function of this game is shown to solve the famous ``infinity Laplacian''
(see \cite{MR2408259} and the references therein). There are many variations on the rules of the game, for example adding a running cost of movement,
which give rise to a class of related ``Aronsson equations'', see \cite{MR2341994,MR2200259,MR2449057}.

In this paper we consider a variation of the game where, instead of flipping a coin, at each turn the players pick a direction and the distance moved in the chosen direction is determined by observing a stochastic process.
If the stochastic process is Brownian motion the corresponding limit of the value function will be the infinity Laplacian equation as before,
but if the stochastic process is a general Levy process the result will be a (deterministic) integro-differential equation with non-local behavior.
We call such a situation ``non-local tug-of-war'' and herein we study the case of a symmetric $s$-stable Levy process with $s\in (\frac{1}{2},1)$
(such processes are connected to the ``fractional Laplacian'' $(-\triangle)^s$). As we will show through a
heuristic argument in Section \ref{derivation}, this game will naturally lead to the following operator
(see also Subsection \ref{otherdefn}
for further considerations and a comparison with another possible definition of solution): 

\begin{definition}\label{IFLdefn}
For $s\in (\frac{1}{2},1)$ the ``infinity fractional Laplacian'' $\I:C^{1,1}(x)\cap BC(\mathbb{R}^N)\rightarrow \mathbb{R}$ at a point $x$ is defined in the following way: 
\begin{itemize}
\item If $\nabla\phi(x)\neq 0$ then
\begin{align}
\I\phi(x)=\int_0^\infty\frac{\phi(x+\eta v)+\phi(x-\eta v)-2\phi(x)}{\eta^{1+2s}}\,d\eta,\notag
\end{align}
where $v\in S^{N-1}$ is the direction of $\nabla\phi(x)$.
\item If $\nabla\phi(x)=0$ then
\begin{align}
\I\phi(x)= \sup_{y\in S^{N-1}}\int_0^\infty\frac{\phi(x+\eta y)-\phi(x)}{\eta^{1+2s}}\,d\eta+\inf_{z\in S^{N-1}}\int_0^\infty\frac{\phi(x-\eta z)-\phi(x)}{\eta^{1+2s}}\,d\eta.\notag
\end{align}
\end{itemize}
\end{definition}
In the above definition, $BC(\mathbb{R}^N)$ is used to denote the set of bounded continuous functions on $\mathbb{R}^N$, and functions which are $C^{1,1}$ at a point $x$
are defined in Definition \ref{C11defn}.

Given a domain $\Omega\subset \mathbb{R}^N$ and data $f:\R^N\setminus \Omega\rightarrow \mathbb{R}$, we will be interested in solutions of the integro-differential equation
\begin{align}
\left\{\begin{array}{cclrl}
\I u(x)&=&0 &\text{if}  &x\in \Omega,\\
u(x)&=&f(x) &\text{if} &x\in \R^N\setminus \Omega.
\end{array}\right.\label{PDE}
\end{align}
(This is the Dirichlet problem for the infinity fractional Laplacian. In Section \ref{obstacle} we will also consider a double obstacle problem associated to the infinity fractional Laplacian.)

As we will see, a natural space for the data $f$ is the set of uniformly H\"older continuous with exponent $2s-1$, that is,
\begin{align}
\sup_{x,y\in \R^N\setminus \Omega}\frac{|f(x)-f(y)|}{|x-y|^{2s-1}}<\infty.\notag
\end{align}
If $f$ belongs to this space, we will show that (sub-super)solutions of (\ref{PDE}), if they exist, are also uniformly H\"older continuous with exponent $2s-1$,
and have a H\"older constant less then or equal to the constant for $f$. 
This is analogous to the well known absolutely minimizing property of the infinity Laplacian \cite{MR1861094,MR1218686}, and is argued through  ``comparison with cusps'' of the form
\begin{align}
\mathcal{C}(x)=A|x-x_0|^{2s-1}+B.\label{cuspdfn}
\end{align}
Indeed, this follows from the fact that these cusps satisfy $\I\mathcal{C}(x)=0$ at any point $x\neq x_0$  (see Lemma \ref{lemma:fund solution}),
so they can be used as barriers.
If the data $f$ in (\ref{PDE}) is assumed uniformly Lipschitz and bounded, one can use this H\"older continuity property to get enough compactness and regularity
to show that, as $s\rightarrow 1$, solutions converge uniformly to the (unique) solution of the Dirichlet problem for the infinity Laplacian $\triangle_\infty u =D^2u[\nabla u,\nabla u]$.

Let us point out that uniqueness of viscosity solutions to \eqref{PDE} is not complicated, see for instance Theorem \ref{comparison}.
Instead, the main obstacle here is in the existence theory, and the problem comes from the discontinuous behavior of $\I\phi$ at points where $\nabla\phi=0$.
Consider for example a function obtained by taking the positive part of a paraboloid: $p:\mathbb{R}^2\rightarrow\mathbb{R}$ is given by $p(x,y)=(1-2x^2-y^2)\vee 0$.
(Here and in the sequel,
$\vee$ [resp. $\wedge$] denotes the maximum [resp. minimum] of two values.)  Note
\begin{align}
\lim_{h\rightarrow 0} \I p(h,0)\neq \lim_{h\rightarrow 0}\I p(0,h)\neq \I p(0,0).\notag
\end{align}
In particular $\I\phi(x)$ can be discontinuous even if $\phi$ is a very nice function, and in particular $\I$ is unstable under uniform limit at points where the
limit function has zero derivative.
This is actually also a feature of the infinity Laplacian if one defines $\Delta_\infty \phi=\frac{D^2\phi[\nabla \phi,\nabla \phi]}{|\nabla\phi|^2}$.
However, in the classical case, this problem is ``solved'' since one actually considers the operator $D^2\phi[\nabla \phi,\nabla \phi]$, so with the latter definition the infinity Laplacian
is zero when $\nabla\phi=0$. In our case we cannot adopt this other point of view, since $u=0$ in $\Omega$ would always be a solution whenever
$f:\R^N\setminus \Omega\to \R$ vanishes on $\partial\Omega$, even if $f$ is not identically zero.
As we will discuss later, in game play this instability phenomenon of the operator is expressed in unintuitive strategies which stem
from the competition of local and non-local aspects of the operator, see Remark \ref{gameplayex}.

In order to prevent such pathologies and avoid this (analytical) problem, we will restrict ourselves to 
situations where we are able to show that $\nabla u\neq 0$ (in the viscosity sense) so that $\I$ will be stable.
This is a natural restriction guaranteeing the players will always point in opposite directions.  Using standard techniques we also show uniqueness of solutions on compact sets,
and uniqueness on non-compact sets in situations where the operator is stable.

We will consider two different problems: (D) the Dirichlet problem; (O) a double obstacle problem.
As we will describe in the next section, they both have a natural interpretation as the limit of value functions for a ``non-local tug-of-war''.
Under suitable assumptions on the data, we can establish ``strict uniform monotonicity'' (see Definition \ref{unifmonotonedefn}) of the function constructed using Perron's method (which at the beginning we do not know to be a solution),
so that we can prove prove existence and uniqueness of solutions.

In situation (D) we consider $\Omega$ to be an infinite strip with data $0$ on one side and $1$ on the other,
this is the problem given by (\ref{PDEm}).  Assuming some uniform regularity on the boundary of $\Omega$ we construct suitable barriers
which give estimates on the growth and decay of the ``solution'' near $\partial\Omega$ implying strict uniform monotonicity.

In situation (O) we consider two obstacles, one converging to $0$ at negative infinity along the $e_1$ axis which the solution must lie below,
and one converging to $1$ at plus infinity along the $e_1$ axis which the solution must lie above. Then, we will look for a function $u:\R^n \to [0,1]$
which solves the infinity fractional Laplacian whenever it does not touch one of the obstacles. This is the problem given by \eqref{obstaclePDE}.
We prove that the solution must coincide with the obstacles near plus and minus infinity, and we use this to deduce strict uniform monotonicity.
In addition we demonstrate Lipschitz regularity for solutions of the obstacle problem, and analyze how the solution approaches the obstacle.

Before concluding this section we would like to point out a related work, \cite{CLM}, which considers H\"older extensions from a variational point of view as opposed to the game theoretic approach we take here.  In \cite{CLM} the authors construct extensions by finding minimizers for the $W^{ps,\infty}(\Omega)$ norm with $s\in (0,1)$, then taking the limit as $p\rightarrow \infty$.

Now we briefly outline the paper:  In Section \ref{derivation} we give a detailed (formal) derivation for the operator $\Delta^s_\infty$,
and we introduce the concept of viscosity solutions for this operator.
In Section \ref{comparisonsec} we prove a comparison principle on compact sets and demonstrate H\"older regularity of solutions.
This section also contains a stability theorem and an improved regularity theorem.
 In Section \ref{monotone} we investigate a Dirichlet monotone problem, and in Section \ref{obstacle} we investigate a monotone double obstacle problem.

\section{Derivation of the Operator and Viscosity Definitions}\label{derivation}
\subsection{Heuristic Derivation}We give a heuristic derivation of $\I$ by considering two different non-local versions of the two player tug-of-war game:
\begin{itemize}
\item[(D)]  Let $\Omega$ be an open,
simply connected subset of $\mathbb{R}^N$ where the game takes place, and let $f:\mathbb{R}^N\rightarrow \mathbb{R}$
describe the payoff function for the game (we can assume $f$ to be defined on the whole $\R^N$).
The goal is for player one to maximize the payoff while player two attempts to minimize the payoff. 
The game is started at any point $x_0\in \Omega$, and let $x_n$ denote the position of the game at the the beginning of the $n$th turn.
Then both players pick a direction vector, say $y_n,z_n\in S^{N-1}$ (here and in the sequel, $S^{N-1}$ denotes the unit sphere), and the two players 
observe a stochastic process $X_t$ on the real line starting from the origin.  This process is embedded into the game by the function $g_n:\mathbb{R}\rightarrow \mathbb{R}^N$ defined:
\begin{align}
g_n(X_t)=\left\{
\begin{array}{lcl}
X_t v_n+x_{n} &\text{if} &X_t\geq 0,\\
X_t w_n+x_{n} &\text{if} &X_t<0.
\end{array}\right. \notag
\end{align}
On each turn the stochastic process is observed for some predetermined time $\epsilon>0$ which is the same for all turns.
If the image of the stochastic process remains in $\Omega$ the position of the game moves to $x_{n+1}=g_n(X_{\epsilon})$, and the game continues.
If the image of the stochastic process leaves $\Omega$, that is, $g_n(X_\epsilon)\in \mathbb{R}^N\setminus\Omega$, then the game is stopped and the payoff of the game is $f(g_n(X_\epsilon))$. 

\item[(O)] Consider two payoff functions, $\Gamma^+,\Gamma^-:\mathbb{R}^N\rightarrow\mathbb{R}$, such that $\Gamma^+(x)\geq\Gamma^-(x)$ for all $x\in\mathbb{R}^N$.
Again the goal is for player one to maximize the payoff while player two attempts to minimize the payoff.
The game starts at any point $x_0\in\mathbb{R}^N$ (now there is no boundary data), and let $x_n$ denote the position of the game at the the beginning of the $n$th turn.
In this case, at the beginning of each turn, both players are given the option to stop the game: if player one stops the game the payoff is $\Gamma^-(x_n)$,
and if player two stops the game the payoff is $\Gamma^+(x_n)$.  If neither player decides to stop the game, then both players pick a direction vector and observe a stochastic process as in (D).
Then, the game is moved to the point $x_{n+1}=g_n(X_\epsilon)$ as described in case (D), and they continue playing in this way until one of the players decides to stop the game.
\end{itemize}
The game (D) will correspond to the Dirichlet problem \eqref{PDEm}, while (O) corresponds to the double obstacle problem \eqref{obstaclePDE} where the payoff functions $\Gamma^+,\Gamma^-$
will act as an upper and lower obstacles.

In order to derive a partial differential equations associated to these games, we use the dynamic programming principle
to write an integral equation whose solution represents the expected value of the game starting at $x\in \Omega$.  Denote by $p^t(\eta)$
the transition density of the stochastic process observed at time $t$, so that for a function $f:\mathbb{R}\rightarrow \mathbb{R}$ the expected value of $f(X_t)$ is $p^t\ast f(0)$.  The expected value $u(x)$ of the game starting at $x\in \Omega$, and played with observation time $\epsilon$, satisfies
\begin{align}
2u(x)= \sup_{y\in S^{N-1}}\inf_{z\in S^{N-1}}\left\{ \int_0^\infty p^\epsilon(\eta)u(x+\eta y)\,d\eta
+ \int_0^\infty p^\epsilon(\eta)u(x-\eta z)\,d\eta\right\},\notag
\end{align}
or equivalently
\begin{align}
0= \sup_{y\in S^{N-1}}\inf_{z\in S^{N-1}}\left\{\int_0^\infty \frac{p^\epsilon(\eta)}{\epsilon}[u(x+\eta y)+u(x-\eta z)-2u(x)]\,d\eta\right\}.\label{approxoperator}
\end{align}
We are interested in the limit $\epsilon\rightarrow 0$.

We now limit the discussion to the specific case where the stochastic process is a one dimensional symmetric $s$-stable Levy process for $s\in (\frac{1}{2},1)$.  That is
\begin{align}
\mathbb{E}_x\left[e^{i\xi \cdot (X_t-X_0)}\right]=e^{-t|\xi|^{2s}}.\notag
\end{align}
It is well known that this process has an infinitesimal generator
\begin{align}
-(-\triangle)^su(x)=2(1-s)\int_{0}^\infty\frac{u(x+\eta )+u(x-\eta)-2u(x)}{\eta^{1+2s}}\notag
\end{align}
and a transition density which satisfies
\begin{align}
p^\epsilon(\eta)\sim \frac{\epsilon}{(\epsilon^\frac{1}{s}+\eta^2)^{\frac{2s+1}{2}}}.\notag
\end{align}
Hence, in the limit as $\epsilon \to 0$ (\ref{approxoperator}) becomes
\begin{align}
0= \sup_{y\in S^{N-1}}\inf_{z\in S^{N-1}}\left\{\int_0^\infty \frac{u(x+\eta y)+u(x-\eta z)-2u(x)}{\eta^{1+2s}}\,d\eta\right\}.\label{operator}
\end{align}
As we will show below, it is not difficult to check that, if $u$ is smooth, this operator coincides with the one in Definition \ref{IFLdefn}.
From this game interpretation, we can also gain insight into the instability phenomenon mentioned in the introduction:

\begin{remark}\label{gameplayex}{\rm 
As evident from Definition \ref{IFLdefn} the players choice of direction at each term is weighted heavily by the gradient, a local quantity
(and in the limit $\epsilon=0$, it is uniquely determined from it).
However, after this direction is chosen, the jump is done accordingly to the stochastic process, a non-local quantity, and it may happen
that the choice dictated by the gradient is exactly the opposite to what the player would have chosen in order to maximize its payoff.

Consider for example the following situation: $\Omega\subset\mathbb{R}^2$ is the unit ball
centered at the origin, and $f(x,y)$ is a smooth non-negative function (not identically zero) satisfying $f(x,y)=f(x,-y)$ and supported in the unit ball centered at $(2,0)$.
If a solution $u$ for (\ref{PDE}) exists, by uniqueness (Theorem \ref{comparison}) $u$ is symmetric with respect to $y=0$. Hence it must attain a local maximum at a point $(r_0,0)$ with $r_0\in (-1,1)$,
and there are points $(r,0)$ with $r>r_0$ such that the gradient (if it exists) will have direction $(-1,0)$.
Starting from this point the player trying to maximize the payoff will pick the direction $(-1,0)$ since this will be the direction of the gradient,
but the maximum of $f$ occurs exactly in the opposite direction $(1,0)$.}
\end{remark}

\subsection{Viscosity Solutions}
As we said, the operator defined in \eqref{operator} coincides with the one in Definition \ref{IFLdefn} when $u$ is smooth.
If $u$ is less regular, one can make sense of the $\inf\sup$ with a ``viscosity solution'' philosophy (see \cite{MR1118699}), but first let us define $C^{1,1}$ functions at a point $x_0$:
\begin{definition}\label{C11defn}
 A function $\phi$ is said to be $C^{1,1}(x_0)$, or equivalently ``$C^{1,1}$ at the point $x_0$'' if there is a vector $p\in \mathbb{R}^N$ and numbers $M,\eta_0>0$ such that
\begin{align}
\label{eq:C11}
 |\phi(x_0+x)-\phi(x_0)-p\cdot x|\leq M |x|^2
\end{align}
for $|x|<\eta_0$.  We define $\nabla \phi(x_0):=p$.  
\end{definition}
It is not difficult to check that the above definition of $\nabla \phi(x_0)$ makes sense, that is, if $u$ belongs to $C^{1,1}(x_0)$ then there exists a unique vector $p$
for which \eqref{eq:C11} holds.

Turning back to (\ref{operator}), let $u\in C^{1,1}(x)$ be bounded and H\"older continuous.  We can say $u$ is a supersolution at $x\in \Omega$ if, for any $\epsilon>0$, there exists
 $z_\epsilon\in S^{N-1}$ such that
\begin{align}
\sup_{y\in S^{N-1}}\left\{\int_0^\infty \frac{u(x+\eta y)+u(x-\eta z_\epsilon)-2u(x)}{\eta^{1+2s}}\,d\eta\right\}\leq \epsilon.\label{supop}
\end{align}
If $\nabla u(x)=0$ the above integral is finite for any choice of $y$ and $z_\epsilon$. As $S^{N-1}$ is compact, there is
a subsequence $\epsilon\rightarrow 0$ and $z_\epsilon\rightarrow z_0$ such that, in the limit,
\begin{align}
\sup_{y\in S^{N-1}}\left\{\int_0^\infty \frac{u(x+\eta y)+u(x-\eta z_0)-2u(x)}{\eta^{1+2s}}\,d\eta\right\}\leq 0.\notag
\end{align}
So, in the case $\nabla u(x)=0$, we say that $u$ is a supersolution if there is a $z_0$ such that the above inequality holds.

If $\nabla u(x)\neq 0$ we rewrite
\begin{align}
\int_0^\infty &\frac{u(x+\eta y)+u(x-\eta z_\epsilon)-2u(x)}{\eta^{1+2s}}\,d\eta\notag\\
&\ \ \ \ =\int_0^\infty \frac{u(x+\eta y)+u(x-\eta z_\epsilon)-\eta\nabla u(x_0)\cdot(y-z_\epsilon)-2u(x)}{\eta^{1+2s}}\,d\eta \notag\\
&\ \ \ \ \ \ \ \ \ \ +\nabla u(x_0)\cdot(y-z_\epsilon)\int_0^\infty \eta^{-2s}\,d\eta.\notag
\end{align}
The first integral on the right hand side is convergent for all choices of $y$, $z_\epsilon$ but the second diverges when $s>\frac{1}{2}$. (Strictly speaking, one should argue
in the limit of (\ref{approxoperator}) to understand the second integral.)  Let $v\in S^{N-1}$ denote the direction of $\nabla u(x)$.  Since $v$ is a possible choice
for $y$ and $\nabla u(x)\cdot(v-z_\epsilon)\geq 0$ for any choice of $z_\epsilon$ we are compelled to choose $z_\epsilon=v$.  Likewise, once we set
$z_\epsilon=v$ the supremum in (\ref{supop}) is obtained when $y=v$. Hence, in the case $\nabla u(x)\neq0$, we say that $u$ is a supersolution if 
\begin{align}
\int_0^\infty \frac{u(x+\eta v)+u(x-\eta v)-2u(x)}{\eta^{1+2s}}\,d\eta\leq 0, \qquad v=\frac{\nabla u(x)}{|\nabla u(x)|}\notag
\end{align}
A similar argument can be made for subsolutions and with these considerations the right hand side of (\ref{operator}) leads to Definition \ref{IFLdefn}.

In addition to $u\in C^{1,1}$ we have assumed $u$ is bounded and H\"older continuous.
As we will demonstrate in Section \ref{comparison}, both assumptions can be deduced from the data when the payoff function $f$ is bounded and uniformly H\"older continuous.
When $u\notin C^{1,1}$ we use the standard idea of test functions for viscosity solutions, replacing $u$ locally with a $C^{1,1}$ function which touches it from below or above.  

\begin{definition}\label{subsupdefn}
An upper [resp. lower] semi continuous function $u:\mathbb{R}^N\rightarrow\mathbb{R}$ is said to be a \emph{subsolution} [resp. \emph{supersolution}] at $x_0$, and we write $\I u(x_0)\geq 0$ [resp. $\I u(x_0)\leq 0$], if every time all of the following happen:
\begin{itemize}
\item $B_r(x_0)$ is an open ball of radius $r$ centered at $x_0$,
\item $\phi\in C^{1,1}(x_0)\cap C(\bar{B}_r(x_0))$,
\item $\phi(x_0)=u(x_0)$,
\item $\phi(x)>u(x)$ [resp. $\phi(x)<u(x)$] for every $x\in B_r(x_0)\setminus \{x_0\}$,
\end{itemize}
we have $\I\tilde{u}(x_0)\geq 0$ [resp. $\I\tilde{u}(x_0)\leq 0$], where
\begin{align}
\tilde{u}(x):=\left\{
\begin{array}{ccl}
\phi(x) &\text{if}  &x\in  B_r(x_0)\\
u(x) &\text{if} &x\in \mathbb{R}^N\setminus B_r(x_0).
\end{array}\right.\label{subsupcut}
\end{align}
\end{definition}
In the above definition we say the test function $\phi$ ``touches $u$ from above [resp. below] at $x_0$''.
We say that $u:\Omega\rightarrow\mathbb{R}$ is a \textit{subsolution} [resp. \textit{supersolution}]\
if it is a subsolution [resp. supersolution] at every point inside $\Omega$.
We will also say that a function $u:\Omega\rightarrow\mathbb{R}$ is a ``\textit{subsolution} [resp. \textit{supersolution}] \textit{at non-zero gradient points}''
if it satisfies the subsolution [resp. supersolution] condition on Definition \ref{subsupdefn} only when $\nabla\phi(x)\neq 0$.
If a function is both a subsolution and a supersolution, we say it is a \textit{solution}. 

With these definitions, game (D) leads to the Dirichlet problem (\ref{PDEm}), and game (O) leads to the double obstacle problem (\ref{obstaclePDE}).

\subsection{Further Considerations}\label{otherdefn}
As mentioned in the introduction, the definition of solution we adopt is not stable under uniform limits on compact sets.  We observe here a weaker definition for solutions which is stable under such limits by modifying Definition \ref{subsupdefn} when the test function has a zero derivative.  

One may instead define a subsolution to require only that
\begin{align}
\sup_{y\in S^{N-1}}\int_0^\infty\frac{\tilde{u}(x+\eta y)+\tilde{u}(x-\eta y)-2\tilde{u}(x)}{\eta^{1+2s}}\,d\eta\geq 0 \label{altsub}
\end{align}
when $u$ is touched from above at a point $x_0$ by a test function $\phi$ satisfying $\nabla\phi(x_0)=0$.
Likewise, one may choose the definition for a supersolution to require only
\begin{align}
\inf_{y\in S^{N-1}}\int_0^\infty\frac{\tilde{u}(x+\eta y)+\tilde{u}(x-\eta y)-2\tilde{u}(x)}{\eta^{1+2s}}\,d\eta\leq 0 \label{altsup}
\end{align}
when $u$ is touched from below at a point $x_0$ by a test function $\phi$ satisfying $\nabla\phi(x_0)=0$.
When $\phi$ satisfies $\nabla\phi(x_0)\neq 0$ we refer back to Definition \ref{subsupdefn}.
It is straightforward to show this definition is stable under uniform limits on compact sets,
and it should not be difficult to prove that
the expected value functions coming from the non-local tug-of-war game
converge to a viscosity solution of \eqref{altsub}-\eqref{altsup}.
However, although this definition may look natural,
unfortunately solutions are not unique and no comparison principle holds.
(Observe that, in the local case, by Jensen's Theorem \cite{MR1218686} one does not need to impose any condition 
at points with zero gradient.)

For example, in one dimension consider the classical fractional Laplacian on $(0,1)$, with non-negative data in
the complement vanishing both at $0$ and $1$.  This has a unique non-trivial positive solution $v(x)$.
 When extended in one more dimension as $u(x,y)=v(x)$ we recover
 a non-trivial solution in the sense of Definition \ref{subsupdefn}
to the problem in the strip $(0,1)\times \mathbb{R}$ with corresponding boundary data, and of course this is also 
a solution in the sense of \eqref{altsub}-\eqref{altsup}. However, with the latter defintion,
also $u\equiv 0$ is a solution.  

To further show the lack of uniqueness for \eqref{altsub}-\eqref{altsup},
in the appendix we also provide an example of a simple bi-dimensional geometry for which
the boundary data is positive and compactly supported, and $u\equiv 0$ is a solution.
However, for this geometry there is also a positive subsolution in the sense of \eqref{altsub}-\eqref{altsup},
which show the failure of any comparison principle.

In light of these examples, we decided to adopt Definition \ref{subsupdefn}, for which a comparison principle holds
(see Theorems \ref{comparison} and \ref{monocompthrm}).

\section{Comparison Principle and Regularity}\label{comparisonsec}

\subsection{Comparison of Solutions}
Our first goal is to establish a comparison principle for subsolutions and supersolutions.
To establish the comparison principle in the ``$\nabla u(x_0)=0$'' case we will make use of the following maximal-type lemma:
\begin{lemma}\label{maxlemma}
Let $u,w\in C^{1,1}(x_0)\cap BC(\mathbb{R}^N)$ be such that $\nabla u(x_0)=\nabla w(x_0)=0$.  Then
\begin{align}
2\inf_{z\in S^{N-1}}\int_0^\infty&\frac{[u-w](x_0-\eta z)-[u-w](x_0)}{|\eta|^{1+2s}}\,d\eta\notag\\
&\leq \I u(x_0)-\I w(x_0) \notag\\
&\leq 2\sup_{y\in S^{N-1}}\int_0^\infty\frac{[u-w](x_0+\eta y)-[u-w](x_0)}{|\eta|^{1+2s}}\,d\eta.\notag
\end{align}
\end{lemma}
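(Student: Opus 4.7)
The plan is to exploit the splitting of the operator at zero-gradient points given by Definition \ref{IFLdefn}: writing
\begin{align*}
I(y;\phi) := \int_0^\infty \frac{\phi(x_0+\eta y)-\phi(x_0)}{\eta^{1+2s}}\,d\eta,\qquad J(z;\phi) := \int_0^\infty \frac{\phi(x_0-\eta z)-\phi(x_0)}{\eta^{1+2s}}\,d\eta,
\end{align*}
we have $\I\phi(x_0)=\sup_{y\in S^{N-1}} I(y;\phi)+\inf_{z\in S^{N-1}} J(z;\phi)$ for $\phi\in\{u,w\}$. First I would verify absolute convergence of every integral in sight: the $C^{1,1}(x_0)$ condition together with $\nabla\phi(x_0)=0$ yields $|\phi(x_0+\eta v)-\phi(x_0)|\leq M\eta^2$ for small $\eta$, producing an integrand bounded by $M\eta^{1-2s}$ near zero (integrable since $s<1$), while boundedness of $\phi$ handles the tail at infinity. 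The same bound applies to $u-w$.

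Next I would invoke the elementary algebra of sup and inf. For bounded real-valued functions $F$ and $G$ on a common index set one has $\sup F-\sup G\leq\sup(F-G)$ and $\inf F-\inf G\leq\sup(F-G)$, with dual lower bounds $\sup F-\sup G\geq\inf(F-G)$ and $\inf F-\inf G\geq\inf(F-G)$ obtained by exchanging the roles of $F$ and $G$. Applied termwise to the two contributions in $\I u(x_0)-\I w(x_0)$, these give
\begin{align*}
\sup_y I(y;u)-\sup_y I(y;w) &\leq \sup_y I(y;u-w),\\
\inf_z J(z;u)-\inf_z J(z;w) &\leq \sup_z J(z;u-w),
\end{align*}
together with the analogous inequalities having $\inf$ on the right-hand side.

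To merge the $I$- and $J$-contributions I would use the symmetry of $S^{N-1}$ under $v\mapsto -v$: the change of variable $z=-y$ in the integrand of $J$ produces exactly the integrand of $I$, so $\sup_z J(z;u-w)=\sup_y I(y;u-w)$ and $\inf_z J(z;u-w)=\inf_y I(y;u-w)$. Adding the two upper bounds then yields the right-hand inequality of the lemma, and adding the two lower bounds yields the left-hand inequality, with the factor $2$ coming from the sum of two identical bounds.

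The main obstacle is essentially notational rather than mathematical: one must be careful that every sup/inf manipulation is performed on quantities which are genuinely finite real numbers, which is precisely what the $C^{1,1}(x_0)$ hypothesis with vanishing gradient ensures. No viscosity test-function machinery is invoked here, since both $u$ and $w$ are classically regular at $x_0$; the lemma is a purely pointwise algebraic statement about the operator $\I$ at such points.
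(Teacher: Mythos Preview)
Your proof is correct and follows essentially the same route as the paper's. The paper introduces the functional $L(u,y,x_0)$ (your $I(y;u)$) and then proves the elementary inequalities $\sup F-\sup G\leq\sup(F-G)$ and the dual bound by choosing $\delta$-near-maximizers $\hat y$ and $\bar y$ for $u$ and $w$; your version simply invokes these inequalities as standard facts and then exploits the same $v\mapsto -v$ symmetry of $S^{N-1}$ to collapse the $I$- and $J$-bounds into a single term with the factor $2$.
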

\begin{proof}
We use the notation
\begin{align}
L(u,y,x_0)=\int_0^\infty\frac{u(x_0+\eta y)-u(x_0)}{|\eta|^{1+2s}}\,d\eta\notag
\end{align}
so that
\begin{align}
\I u(x_0)=\sup_{y\in S^{N-1}}L(u,y,x_0)+\inf_{y\in S^{N-1}}L(u,y,x_0).\notag
\end{align}

For any $\delta >0$ there exists $\hat{y},\bar{y}\in S^{N-1}$ such that 
\begin{align}
\sup_{y\in S^{N-1}}L(u,y,x_0) - L(u,\hat{y},x_0)< \delta,\ \ \ \sup_{y\in S^{N-1}}L(w,y,x_0)-L(w,\bar{y},x_0) <\delta.\notag
\end{align}
This implies
\begin{align}
L(u,\bar{y},x_0) \leq &\sup_{y\in S^{N-1}}L(u,y,x_0) < \delta +L(u,\hat{y},x_0),\notag\\
-L(w,\bar{y},x_0)-\delta< &-\sup_{y\in S^{N-1}}L(w,y,x_0)\leq -L(w,\hat{y},x_0).\notag
\end{align}
All together, and using the linearity of $L$,
\begin{align}
\inf_{z\in S^{N-1}}&L(u-w,-z,x_0)-\delta\notag\\ 
&< \sup_{y\in S^{N-1}}L(u,y,x_0)-\sup_{y\in S^{N-1}}L(w,y,x_0)\notag\\
&\ \ \ \ \ \ \ \ <\sup_{y\in S^{N-1}}L(u-w,y,x_0)+\delta.\notag
\end{align}
A similar argument holds for the infimums in the definition of $\I$ and the proof is completed by combining them and letting $\delta\rightarrow 0$.
\end{proof}

We now show a comparison principle on compact sets. We assume that our functions
grow less than $|x|^{2s}$ at infinity (i.e., there exist $\alpha <2s$, $C>0$, such that $|u(x)| \leq C(1+|x|)^\alpha$), so that the integral defining $\I$ is convergent at infinity.
Let us remark that, in all the cases we study, the functions will always grow at infinity at most as $|x|^{2s-1}$,
so this assumption will always be satisfied.

\begin{theorem}\emph{(Comparison Principle on Compact Sets)}\label{comparison}
Assume $\Omega$ is a bounded open set.  Let $u,w:\mathbb{R}^N\rightarrow \mathbb{R}$, be two continuous functions such that
\begin{itemize}
\item $\I u(x)\geq 0$ and $\I w(x)\leq 0$ for all $x\in\Omega$ (in the sense of Definition \ref{subsupdefn}),
\item $u$, $w$ grow less than $|x|^{2s}$ at infinity,
\item $u\leq w$ in $\mathbb{R}^N\setminus \Omega$.
\end{itemize}
Then $u\leq w$ in $\Omega$.
\end{theorem}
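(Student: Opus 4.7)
The proof will proceed by contradiction via the classical doubling-of-variables technique, adapted to the nonlocal setting of $\I$. Suppose $M := \sup_{\Omega}(u-w) > 0$; since $u \le w$ outside $\Omega$ and $\Omega$ is bounded, the supremum is attained at some $\hat x \in \Omega$. For small $\epsilon > 0$ (and a suitable penalization $-\delta(\langle x\rangle + \langle y\rangle)$ at infinity, with $\langle x\rangle := (1+|x|^2)^{\beta/2}$ and $\alpha<\beta<2s$, which uses the sub-quadratic growth hypothesis to guarantee a maximizer even when $u,w$ are unbounded), consider
\[
\Phi_\epsilon(x,y) = u(x) - w(y) - \frac{|x-y|^2}{2\epsilon^2}.
\]
The classical doubling-of-variables lemma yields a maximizer $(x_\epsilon, y_\epsilon)$ with $|x_\epsilon - y_\epsilon|^2/\epsilon^2 \to 0$, $(x_\epsilon, y_\epsilon) \to (\hat x, \hat x)$, and $u(x_\epsilon) - w(y_\epsilon) \ge M/2$ for $\epsilon$ small; in particular $x_\epsilon, y_\epsilon \in \Omega$.

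At the maximizer, the function $\phi(x) := u(x_\epsilon) + \tfrac{1}{2\epsilon^2}(|x-y_\epsilon|^2 - |x_\epsilon-y_\epsilon|^2) + |x-x_\epsilon|^4$ is $C^{1,1}(x_\epsilon)$, strictly touches $u$ from above at $x_\epsilon$ in every ball $B_r(x_\epsilon)$, and has gradient $p_\epsilon := (x_\epsilon-y_\epsilon)/\epsilon^2$; a symmetric $\psi$ strictly touches $w$ from below at $y_\epsilon$ with the same gradient $p_\epsilon$. Assuming first $p_\epsilon \ne 0$, set $v_\epsilon := p_\epsilon/|p_\epsilon|$, and subtract the subsolution inequality at $x_\epsilon$ from the supersolution inequality at $y_\epsilon$ (both use the common direction $v_\epsilon$) to obtain
\[
0 \le \int_0^\infty \eta^{-1-2s}\,T_\epsilon(\eta)\,d\eta,
\]
where $T_\epsilon(\eta) = [\tilde u(x_\epsilon + \eta v_\epsilon) - \tilde w(y_\epsilon + \eta v_\epsilon)] + [\tilde u(x_\epsilon - \eta v_\epsilon) - \tilde w(y_\epsilon - \eta v_\epsilon)] - 2[u(x_\epsilon) - w(y_\epsilon)]$. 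For $\eta < r$ the explicit quadratic form of $\phi,\psi$ gives $T_\epsilon(\eta) = O(\eta^2/\epsilon^2 + \eta^4)$, contributing at most $Cr^{2-2s}/\epsilon^2$ to the integral. For $\eta \ge r$, $\tilde u = u$ and $\tilde w = w$; since the translates $(x_\epsilon \pm \eta v_\epsilon, y_\epsilon \pm \eta v_\epsilon)$ preserve $|x-y|$, the maximality of $\Phi_\epsilon$ yields $T_\epsilon(\eta) \le 0$. For $\eta \ge R_0$, with $R_0$ depending only on $\mathrm{diam}(\Omega)$, all four evaluation points lie outside $\Omega$ where $u \le w$, giving $T_\epsilon(\eta) \le -M/2 + o(1)$, so $\int_r^\infty \eta^{-1-2s} T_\epsilon\,d\eta \le -c$ for some $c>0$ independent of $\epsilon$. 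Choosing $r = \epsilon^a$ with $a > 1/(1-s)$ makes the near-origin contribution $o(1)$, producing the contradiction $0 \le o(1) - c$.

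The sole genuine obstacle is the degenerate case $p_\epsilon = 0$ (equivalently $x_\epsilon = y_\epsilon$), where both test functions have vanishing gradient at the common point $\hat x_\epsilon$ and one cannot reduce the $\sup$-$\inf$ in Definition \ref{IFLdefn} to a single direction. This is precisely the scenario for which Lemma \ref{maxlemma} was proved: applied to $\tilde u$ and $\tilde w$ at $\hat x_\epsilon$, it gives
\[
0 \le \I\tilde u(\hat x_\epsilon) - \I\tilde w(\hat x_\epsilon) \le 2\sup_{y \in S^{N-1}} \int_0^\infty \eta^{-1-2s}\bigl\{[\tilde u - \tilde w](\hat x_\epsilon + \eta y) - [\tilde u - \tilde w](\hat x_\epsilon)\bigr\}d\eta.
\]
Splitting this integral at $\eta = r$ and estimating exactly as in the previous step (explicit positive quadratic contribution inside $B_r$, and non-positive maximality-based tail outside with strictly negative contribution for $\eta \ge R_0$) delivers the same contradiction. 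Thus Lemma \ref{maxlemma} bridges the local geometry of the test functions to the nonlocal subtraction when $\I$ cannot be evaluated via a single preferred direction, which is the only non-routine ingredient of the argument.
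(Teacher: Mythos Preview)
Your approach via doubling of variables is essentially correct and genuinely different from the paper's. The paper instead regularizes $u$ and $w$ by sup- and inf-convolutions (Lemma~\ref{infsupconvlemma}), so that $u^\epsilon$ is $C^{1,1}$ from below and $w_\epsilon$ is $C^{1,1}$ from above; at the single interior maximum point $\bar x$ of $u^\epsilon-w_\epsilon$ both functions are then $C^{1,1}(\bar x)$ with equal gradients, and $\I$ can be evaluated classically. The contradiction comes not from a quantitative tail estimate but from the exact identity $u^\epsilon=w_\epsilon+\delta$ along the whole line (Case~I) or a ray (Case~II, via Lemma~\ref{maxlemma}) through $\bar x$, which must exit the bounded $\Omega$. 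This is shorter and avoids tracking two points, but it requires knowing that sup/inf-convolutions preserve the sub/supersolution property for $\I$, a nonlocal fact the paper packages in Lemma~\ref{infsupconvlemma}. Your route is the standard viscosity-theory alternative and does not need that lemma.

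Two points in your write-up deserve tightening. First, the penalization $-\delta(\langle x\rangle+\langle y\rangle)$ is mentioned and then dropped; if you use it, its gradient enters $p_\epsilon$ and its $\I$-contribution must be shown to be $O(\delta)$ and sent to zero. Second, the claim ``$T_\epsilon(\eta)\le -M/2+o(1)$ for $\eta\ge R_0$'' is not literally correct, since $u(x_\epsilon+\eta v_\epsilon)-w(y_\epsilon+\eta v_\epsilon)$ evaluates $u,w$ at \emph{different} points, and $w$ is only continuous (not uniformly). The fix is easy: restrict the strict-negativity argument to a compact interval $\eta\in[R_0,R_1]$, where $w$ is uniformly continuous on the relevant compact set so that $w(x_\epsilon+\eta v_\epsilon)-w(y_\epsilon+\eta v_\epsilon)\to 0$ uniformly; on $[R_1,\infty)$ the maximality bound $T_\epsilon\le 0$ suffices. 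With these clarifications your argument goes through.
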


The strategy of the proof is classical: assuming by contradiction there is a point $x_0\in \Omega$ such that $u(x_0)>w(x_0)$,
we can lift $w$ above $u$, and then lower it until it touches $u$ at some point $\bar x$. Thanks to the assumptions, (the lifting of) $w$ will be strictly greater
then $u$ outside of $\Omega$, and then we use the sub and supersolution conditions at $\bar x$ to find a contradiction.
However, since in the viscosity definition we need to touch $u$ [resp. $w$] from above [resp. below] by a $C^{1,1}$ function at $\bar x$,
we need first to use sup and inf convolutions to replace $u$ [resp. $w$] by a semiconvex [resp. semiconcave] function:
\begin{definition}
Given a continuous function $u$, the ``sup-convolution approximation'' $u^\epsilon$ is given by
\begin{align}
u^\epsilon(x_0)=\sup_{x\in \mathbb{R}^N} \left\{u(x)+\epsilon-\frac{|x-x_0|^2}{\epsilon}\right\}.\notag
\end{align}
Given a continuous function $w$, the ``inf-convolution approximation'' $w_\epsilon$ is given by
\begin{align}
w_\epsilon(x_0)=\inf_{x\in \mathbb{R}^N}\left\{ w(x)-\epsilon+\frac{|x-x_0|^2}{\epsilon}\right\}.\notag
\end{align}
\end{definition}

We state the following lemma without proof, as it is standard in the theory of viscosity solutions (see, for instance, \cite[Section 5.1]{MR1351007}).
\begin{lemma}\label{infsupconvlemma}
Assume that $u,w:\mathbb{R}^N\rightarrow \mathbb{R}$ are two continuous functions which grow at most as $|x|^{2}$ at infinity.
The following properties hold:
\begin{itemize}
\item $u^\epsilon \downarrow u$ [resp. $w_\epsilon \uparrow w$] uniformly on compact sets as $\epsilon\rightarrow 0$.
Moreover, if $u$ [resp. $w$] is uniformly continuous on the whole $\R^N$, then the convergence is uniform on $\R^N$.
\item At every point there is a concave [resp. convex] paraboloid of opening $2/\epsilon$
touching $u^\epsilon$ [resp. $w_\epsilon$] from below [resp. from above]. (We informally refer to this property
by saying that $u^\epsilon$ [resp. $w_\epsilon$] is $C^{1,1}$ from below [resp. above].)
\item If $\I u\geq 0$ [resp. $\I w\leq 0$] in $\Omega$ in the viscosity sense, then $\I u^\epsilon \geq 0$ [resp. $\I w^\epsilon \leq 0$] in $\Omega$.
\end{itemize}
\end{lemma}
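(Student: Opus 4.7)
My plan is to address each of the three bullets of the lemma in turn using the standard sup-convolution framework, with most of the work concentrated in the last one.

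For the convergence statement, I would first note that taking $x = x_0$ in the defining supremum yields $u^\epsilon(x_0) \geq u(x_0) + \epsilon$, and a pointwise comparison of $u(x) + \epsilon_i - |x-x_0|^2/\epsilon_i$ for two parameters $\epsilon_1 < \epsilon_2$ shows that the family $\{u^\epsilon\}$ is monotone decreasing as $\epsilon \downarrow 0$. The at-most-quadratic growth of $u$ forces any approximate maximizer $x^* = x^*(x_0,\epsilon)$ to satisfy $|x^* - x_0| \leq C\sqrt{\epsilon}$ for $\epsilon$ small, hence $u^\epsilon(x_0) \leq u(x^*) + \epsilon \to u(x_0)$ by continuity. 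Uniform convergence on compact sets then follows from Dini's theorem, and the quantitative bound $u^\epsilon - u \leq \omega_u(C\sqrt{\epsilon}) + \epsilon$ upgrades this to all of $\R^N$ when $u$ has a global modulus $\omega_u$. For the $C^{1,1}$-from-below statement, letting $x^*$ achieve the supremum in $u^\epsilon(x_0)$, the paraboloid $P(x) := u(x^*) + \epsilon - |x - x^*|^2/\epsilon$ satisfies $P(x_0) = u^\epsilon(x_0)$, while $P(x) \leq u^\epsilon(x)$ for every $x$ since $x^*$ is always a valid competitor in the sup at $x$; thus $P$ is a concave paraboloid of opening $2/\epsilon$ touching $u^\epsilon$ from below at $x_0$.

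For the viscosity stability statement, suppose $\phi \in C^{1,1}(x_0)$ touches $u^\epsilon$ strictly from above at $x_0 \in \Omega$, and set $\xi := x^* - x_0$ where $x^*$ is a maximizer. The core identity, obtained by choosing $z = y + \xi$ in the sup defining $u^\epsilon(y)$, is
\begin{equation*}
u^\epsilon(y) \geq u(y + \xi) + \epsilon - \tfrac{|\xi|^2}{\epsilon}, \qquad \text{with equality at } y = x_0.
\end{equation*}
This shows that the shifted test function $\tilde\phi(y) := \phi(y-\xi) - \epsilon + |\xi|^2/\epsilon$ touches $u$ from above at $x^*$ in a neighborhood (and $x^* \in \Omega$ for $\epsilon$ small). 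Applying the viscosity subsolution property of $u$ at $x^*$ yields $\I \hat u(x^*) \geq 0$, where $\hat u = \tilde\phi$ on $B_r(x^*)$ and $\hat u = u$ elsewhere. I would then compare this against $\I \tilde u^\epsilon(x_0)$, with $\tilde u^\epsilon = \phi$ on $B_r(x_0)$ and $\tilde u^\epsilon = u^\epsilon$ elsewhere, along the common gradient direction $v$: the two integrands agree identically for $\eta < r$ since $\tilde\phi(x^* \pm \eta v) - \tilde\phi(x^*) = \phi(x_0 \pm \eta v) - \phi(x_0)$, while for $\eta > r$ the core identity yields $u^\epsilon(x_0 \pm \eta v) - u^\epsilon(x_0) \geq u(x^* \pm \eta v) - u(x^*)$, so the integrand for $\tilde u^\epsilon$ dominates pointwise. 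Integrating gives $\I \tilde u^\epsilon(x_0) \geq \I \hat u(x^*) \geq 0$. The zero-gradient case of Definition \ref{IFLdefn} is handled in the same way, since the shift-and-compare inequality holds direction by direction and therefore passes through the sup and inf separately.

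The main obstacle I anticipate is the final step above: one needs the test-function pieces inside $B_r$ to match \emph{exactly} (not just up to a bounded error) so that the comparison at $\eta > r$ is not polluted by small-$\eta$ discrepancies, and this is precisely the reason for translating $\phi$ by $\xi$ and shifting by the constant $\epsilon - |\xi|^2/\epsilon$ in the definition of $\tilde\phi$. A minor technical point is that $x^* \in \Omega$ is guaranteed only for $\epsilon$ sufficiently small on compact subsets of $\Omega$, but this is harmless since the lemma is invoked together with the limit $\epsilon \to 0$.
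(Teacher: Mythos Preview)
The paper does not prove this lemma; it is stated without proof as a standard fact in viscosity solution theory, with a reference to \cite[Section 5.1]{MR1351007}. Your argument is a correct and complete expansion of that standard proof. The shift-by-$\xi$ device you use for the third bullet is exactly the right mechanism: the translation makes the test-function pieces on $B_r$ match identically, and the core inequality $u^\epsilon(y)-u^\epsilon(x_0)\geq u(y+\xi)-u(x^*)$ handles the tails, in both the nonzero-gradient and zero-gradient branches of Definition~\ref{IFLdefn}. The caveat you flag about $x^*\in\Omega$ only for $\epsilon$ small (equivalently, $\I u^\epsilon\geq 0$ holds a priori only on the shrunken set $\{x\in\Omega:d(x,\partial\Omega)>C\sqrt{\epsilon}\}$) is a genuine but harmless imprecision in the lemma as stated, since in the paper it is only ever invoked together with the limit $\epsilon\to 0$.
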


\begin{proof}[Proof of Theorem \ref{comparison}]
Assume by contradiction that there is a point $x_0\in \Omega$ such that $u(x_0)>w(x_0)$. Replacing $u$ and $w$ by $u^\epsilon$ and $w_\epsilon$, 
we have  $u^\epsilon(x_0)-w_\epsilon(x_0) \geq c>0$ for $\epsilon$ sufficiently small,
and $(u^\epsilon-w_\epsilon)\vee 0 \to 0$ as $\epsilon \to 0$ locally uniformly outside $\Omega$.
Thanks to these properties, the continuous function $u^\epsilon-w_\epsilon$ attains its maximum over $\overline\Omega$ at some interior point $\bar x$ inside $\Omega$.
Set $\delta = u^\epsilon(\bar x)-w_\epsilon(\bar x)\geq c>0$.
Since $u^\epsilon$ is $C^{1,1}$ from below, $w_\epsilon+\delta$ is $C^{1,1}$ from above, and $w_\epsilon+\delta$ touches $u^\epsilon$ from below at $\bar x$,
it is easily seen that both $u^\epsilon$ and $w_\epsilon+\delta$ are $C^{1,1}$ at $\bar x$ (in the sense of Definition \ref{C11defn}), and that
$\nabla u^{\epsilon}(x_0)=\nabla w_\epsilon(x_0)$. So, can evaluate $\I u^\epsilon(x_0)$ and $\I w_\epsilon(x_0)$ directly
without appealing to test functions.

As $u^\epsilon$ is a subsolution and $w_\epsilon$ is a supersolution, we have $\I u^\epsilon(\bar x)\geq 0$ and $\I(w_\epsilon+\delta)(\bar x)=\I w_\epsilon(\bar x)\leq 0$.
We now break the proof into two cases, reflecting the definition of $\I$.\\
{\bf Case I:} Assume $\nabla u^\epsilon(\bar x)=\nabla w_\epsilon(\bar x)\neq 0$ and let $v\in S^{N-1}$ denote the direction of that vector.  Then,
\begin{align}
0&\geq \I(w_\epsilon+\delta)(\bar x)-\I(u^\epsilon)(\bar x)\notag\\
&\geq \int_{0}^\infty\frac{[w_\epsilon-u^\epsilon](\bar x+\eta v)+[w_\epsilon-u^\epsilon](\bar x-\eta v)-2[w_\epsilon-u^\epsilon](\bar x)}{\eta^{1+2s}}\,d\eta.\notag
\end{align}
As $[w_\epsilon+\delta-u^\epsilon](\bar x)=0$ and $[w_\epsilon+\delta-u^\epsilon](x)\geq 0$ on $\mathbb{R}^N$, we conclude $[w_\epsilon+\delta-u^\epsilon]= 0$ along
the line $\{\bar x+\eta v\}_{\eta\in\R}$, which for $\epsilon$ small contradicts the assumption $u\leq w$ in $\mathbb{R}^N\setminus \Omega$.\\
{\bf Case II:} Assume $\nabla u^\epsilon(\bar x)=\nabla w_\epsilon(\bar x)= 0$.
We proceed similarly to the previous case but use in addition Lemma \ref{maxlemma} applied to $u^\epsilon$ and $w_\epsilon$.  Then,
\begin{align}
0&\geq \I(\tilde{w}_\epsilon+\delta)(\bar x)-\I(\tilde{u}^\epsilon)(\bar x)\notag\\
&\geq 2\inf_{z\in S^{N-1}}\left(\int_{0}^\infty\frac{[w_\epsilon-u^\epsilon](\bar x-\eta z)-[w_\epsilon-u^\epsilon](\bar x)}{\eta^{1+2s}}\,d\eta\right).\notag
\end{align}
Using $[w_\epsilon+\delta-u^\epsilon](\bar x)=0$ and $[w_\epsilon+\delta-u^\epsilon]\geq 0$ on $\mathbb{R}^N$ we deduce the existence of a ray $\{\bar x+\eta v\}_{\eta\geq 0}$,
such that $[w_\epsilon+\delta-u^\epsilon]= 0$ along this ray. This contradicts again the assumption $u\leq w$ in $\mathbb{R}^N\setminus \Omega$, and our proof is complete.
\end{proof}

\subsection{Regularity of Solutions}
The aim of this section is to show that (sub/super)solutions are H\"older continuous of exponent $2s-1$ whenever the boundary data is.
\begin{definition}\label{def:holder}
A function $f:\Omega\to\R$ is H\"older continuous with exponent $\gamma$ on the set $\Omega$ if
\begin{align}
[f]_{C^{0,\gamma}(\Omega)}:=\sup_{x,y \in\Omega}\frac{|f(x)-f(y)|}{|x-y|^\gamma}<\infty.\notag
\end{align}
In this case we write $f\in C^{0,\gamma}(\Omega)$. Moreover, if
$$
\lim_{\delta \to 0} \sup_{x,y \in\Omega,\, |x-y|\leq \delta}\frac{|f(x)-f(y)|}{|x-y|^\gamma}=0,
$$
then we write $f\in C_0^{0,\gamma}(\Omega)$.
\end{definition}
Let us observe that $C_0^{0,\gamma}(\Omega)$ is a separable subspace of the (non-separable) Banach space $C^{0,\gamma}(\Omega)$.
This subspace can be characterized as the closure of smooth function with respect to the norm
$\|f\|_{C^{0,\gamma}(\Omega)}=\|f\|_{L^\infty(\Omega)}+[f]_{C^{0,\gamma}(\Omega)}$. In this section we will show that solutions belong to $C^{0,\gamma}$.
Then, in Section \ref{sect:improved regularity} we will prove a Liouville-type theorem which shows that solutions are actually $C_0^{0,\gamma}$.

As mentioned in the introduction, the natural space for solutions of this problem is $C^{0,2s-1}(\Omega)$. This follows from the fact that the cusps
\begin{align}
\mathcal{C}(x)=A|x-x_0|^{2s-1}+B\notag
\end{align}
are solutions of $\I$ at all points but the tip of the cusp, so they can be used as barriers.

\begin{lemma}\label{lemma:fund solution}
For all $x\neq x_0$, $\I\mathcal{C}(x)=0$.
\end{lemma}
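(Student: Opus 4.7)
The plan is to exploit the radial symmetry of $\mathcal{C}$ to reduce $\I\mathcal{C}(x)$ to a one-dimensional integral which then vanishes by a classical identity.

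Assume $A \neq 0$ (the case $A = 0$ is trivial since $\mathcal{C}$ is then constant and $\I$ clearly annihilates constants). For $x \neq x_0$ the function $\mathcal{C}$ is smooth, with
$$\nabla \mathcal{C}(x) = A(2s-1)|x-x_0|^{2s-3}(x-x_0) \neq 0,$$
parallel to $x-x_0$. Hence the first branch of Definition \ref{IFLdefn} applies with unit vector $v = \pm(x-x_0)/|x-x_0|$. Crucially, the line $\{x+\eta v : \eta \in \R\}$ passes through $x_0$, so writing $r := |x-x_0|$ we have $|x \pm \eta v - x_0| = |r\pm\eta|$. Substituting into the definition,
$$\I\mathcal{C}(x) = A\int_0^\infty \frac{|r+\eta|^{2s-1} + |r-\eta|^{2s-1} - 2r^{2s-1}}{\eta^{1+2s}}\,d\eta,$$
the integrand being symmetric under $\eta \leftrightarrow -\eta$ so that the sign of $v$ is immaterial. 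A Taylor expansion at $\eta = 0$ gives $O(\eta^2)$ in the numerator, while at $\eta \to \infty$ the numerator is $2\eta^{2s-1} + O(\eta^{2s-3}) - 2r^{2s-1}$, so the integrand decays like $1/\eta^2$; the integral converges absolutely.

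It now suffices to show that $I(r) := \int_0^\infty \frac{|r+\eta|^{2s-1} + |r-\eta|^{2s-1} - 2r^{2s-1}}{\eta^{1+2s}}\,d\eta$ vanishes for every $r > 0$. The rescaling $\eta = r\sigma$ gives $I(r) = r^{-1}I(1)$, so everything reduces to $I(1) = 0$. This is the classical statement that $t \mapsto |t|^{2s-1}$ is $s$-harmonic on $\R \setminus \{0\}$, the one-dimensional analogue of the well-known fact that the Riesz profile $|x|^{2s-N}$ is $s$-harmonic off the origin in $\R^N$. For a self-contained check, I would split $I(1) = \int_0^1 + \int_1^\infty$ and apply the substitution $\eta = 1/\tau$ to the tail; a short calculation transforms it into the elementary integral $\int_0^1 [(1+\tau)^{2s-1} + (1-\tau)^{2s-1} - 2\tau^{2s-1}]\,d\tau$, which can be matched against $\int_0^1 \frac{(1+\eta)^{2s-1} + (1-\eta)^{2s-1} - 2}{\eta^{1+2s}}\,d\eta$ (computable via integration by parts or the binomial series for $(1\pm\eta)^{2s-1}$) to yield the required cancellation.

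The only genuine obstacle in the proof is this one-dimensional identity $I(1) = 0$. The $N$-dimensional statement is then an immediate consequence of the radial symmetry: because $\nabla\mathcal{C}$ points along the radius through $x_0$, the unique direction $v$ selected in Definition \ref{IFLdefn} lies along that same radius, which collapses the nonlocal operator to an integral over a single line passing through the singular point $x_0$.
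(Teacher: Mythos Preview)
Your proof is correct and follows the same path as the paper's: both observe that $\nabla\mathcal{C}(x)$ is radial, so Definition \ref{IFLdefn} collapses $\I\mathcal{C}(x)$ to a one-dimensional fractional Laplacian of $\eta\mapsto|\eta|^{2s-1}$, which vanishes away from the origin because $|\eta|^{2s-1}$ is the fundamental solution in dimension one. The only difference is in how the one-dimensional identity is justified: the paper appeals to the Fourier transform, while you invoke the same classical fact and additionally sketch a direct computation via the substitution $\eta\mapsto 1/\eta$ on the tail---this sketch is correct as far as it goes, but the final ``matching'' step (showing the two resulting integrals on $(0,1)$ cancel) is not actually carried out and would still require a nontrivial calculation, so in practice your argument rests on the same citation as the paper's.
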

\begin{proof}
For any $x\neq x_0$, $\nabla \mathcal{C}(x)\neq 0$ and the direction of $\nabla\mathcal{C}(x)$ is the same as the direction of $x-x_0$.
Moreover $\mathcal{C}$ is smooth in a neighborhood of $x$ so we may evaluate $\I\mathcal{C}(x)$ classically.
Thus, to evaluate $\I\mathcal{C}(x)$ we need only to evaluate the 1-D $s$-fractional Laplacian on the 1-D function $\R\ni\eta \mapsto |\eta|^{2s-1}$.
It can be checked through Fourier transform that $|\eta|^{2s-1}$ is the fundamental solution for the 1-D $s$-fractional Laplacian (i.e.,
$\I |\eta|^{2s-1} = c_s \delta_0$), which completes the proof.
\end{proof}

By considering data $f\in C^{0,2s-1}(\R^N\setminus \Omega)$ and comparing (sub/super)solutions of (\ref{PDE}) with these cusps, we will establish regularity.
The intuition is that if $u$ is for instance a subsolution, and we choose a cusp with $A>0$ sufficiently large, for every $x_0 \in \Omega$ we can first raise the cusp 
centered at $x_0$ above the solution, and then lower the cusp until it touches the function $u$ at some point.
If we have assumptions on the data that ensure that the contact point cannot occur outside $\Omega$ (at least for $A$ sufficiently large),
then by the (strict) comparison principle
this contact point can only happen at the tip, from which the H\"older regularity follows easily (see Figure \ref{fig1}).

\begin{figure}
\centerline{\epsfysize=1truein\epsfbox{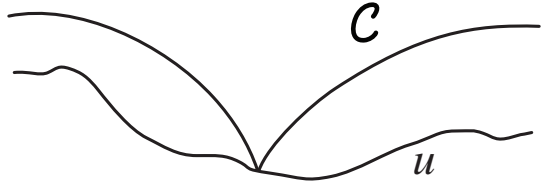}}
\caption{The cusp $\mathcal{C}$ touching $u$ from above.}
\label{fig1}
\end{figure}

We state the following result for subsolutions. By changing $u$ with $-u$, it also holds for supersolutions (with obvious modifications).

\begin{theorem}\label{holdcontthrm}
Let $u$ be a subsolution of $\I$ in the sense of Definition \ref{subsupdefn} inside an open set $\Omega$, with $u=f$ on $\R^N\setminus \Omega$.
Furthermore, assume there exist $A>0$, $C_0<\infty$, such that
\begin{align}
\label{eq:control f}
\sup_{z\in \R^N\setminus \Omega}\left\{f(z)-A|x-z|^{2s-1}\right\}\leq u(x) \leq C_0\qquad \forall\,x \in \Omega.
\end{align}
Then $u\in C^{0,2s-1}(\R^N)$ and
$$
[u]_{C^{0,2s-1}(\R^N)} \leq A.
$$
In particular, since $u=f$ on $\R^N\setminus \Omega$,
$$
|u(x)| \leq \inf_{z\in \R^N\setminus\Omega}\left\{f(z)+ A|x-z|^{2s-1}\right\}\qquad \forall\,x \in \Omega.
$$
\end{theorem}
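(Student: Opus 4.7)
The plan is to establish the one-sided pointwise bound
\[
u(x) \le u(x_0) + A|x-x_0|^{2s-1}\qquad \forall\, x_0,\,x \in \mathbb{R}^N
\]
by the classical sliding-cusp argument: for each fixed $x_0$, I lower the cusp $\mathcal{C}_{x_0,B}(x) := B + A|x-x_0|^{2s-1}$ from above until it first touches $u$, and then show that the contact point must be the tip $x_0$ itself. Swapping $x$ and $x_0$ in this bound gives the two-sided estimate $[u]_{C^{0,2s-1}(\mathbb{R}^N)}\le A$, and taking the infimum over $z\in\mathbb{R}^N\setminus\Omega$ in $u(x)\le f(z) + A|x-z|^{2s-1}$ combined with the lower bound in (\ref{eq:control f}) yields the pointwise inequality stated at the end of the theorem.

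Fix $x_0\in\mathbb{R}^N$ and set $B^* := \sup_{x\in\mathbb{R}^N}(u(x) - A|x-x_0|^{2s-1})$. The bound $u\le C_0$ on $\Omega$ forces $u(x) - A|x-x_0|^{2s-1}\to -\infty$ as $|x|\to\infty$ within $\Omega$, while the hypothesis (\ref{eq:control f}) (combined with the $C^{0,2s-1}$-regularity of $f$ on the complement, which is consistent with and readily derived from the hypothesis by passing to the boundary) gives $f(x) - A|x-x_0|^{2s-1}\le u(x_0)$ for every $x\in\mathbb{R}^N\setminus\Omega$. Consequently $B^*$ is attained at some point $x^*$, and if the target inequality fails, i.e.\ $B^* > u(x_0)$, then necessarily $x^*\in\Omega$ and $x^*\ne x_0$. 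I assume this from now on and aim for a contradiction.

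In this case, $\nabla\mathcal{C}_{x_0,B^*}(x^*)\ne 0$ in the direction $v := (x^*-x_0)/|x^*-x_0|$, the cusp $\mathcal{C}_{x_0,B^*}$ is smooth in a neighborhood of $x^*$, and $\mathcal{C}_{x_0,B^*}\ge u$ globally with equality at $x^*$. To make the touching strict, I replace the cusp locally by
\[
\phi_\delta(x) := \mathcal{C}_{x_0,B^*}(x) + \delta|x-x^*|^2
\]
on a ball $B_r(x^*)$ with $r<|x^*-x_0|$, and set $\tilde u_\delta := \phi_\delta$ on $B_r(x^*)$ and $\tilde u_\delta := u$ outside; then $\phi_\delta$ is admissible in Definition \ref{subsupdefn} (with $\nabla\phi_\delta(x^*) = \nabla\mathcal{C}_{x_0,B^*}(x^*)\ne 0$), so the subsolution property forces $\I\tilde u_\delta(x^*)\ge 0$. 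Splitting the radial integral at $\eta=r$ and using $\I\mathcal{C}_{x_0,B^*}(x^*)=0$ from Lemma \ref{lemma:fund solution}, a direct computation gives
\[
\I\tilde u_\delta(x^*) = \frac{\delta\,r^{2-2s}}{1-s} + \int_r^\infty \frac{(u-\mathcal{C}_{x_0,B^*})(x^*+\eta v) + (u-\mathcal{C}_{x_0,B^*})(x^*-\eta v)}{\eta^{1+2s}}\,d\eta.
\]
The integrand is non-positive since $u\le \mathcal{C}_{x_0,B^*}$ globally, and at $\eta = |x^*-x_0|>r$ the point $x^*-\eta v$ coincides with $x_0$, where $u(x_0)-\mathcal{C}_{x_0,B^*}(x_0) = u(x_0)-B^* < 0$; upper semicontinuity of $u$ then delivers a definite negative contribution of size $-C$ on a short interval around this $\eta$, with $C>0$ independent of $\delta$. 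Choosing $\delta$ small enough that $\delta\, r^{2-2s}/(1-s) < C$ forces $\I\tilde u_\delta(x^*) < 0$, contradicting the subsolution inequality. The main obstacle is precisely this quantitative step: one must arrange a strictly-touching $C^{1,1}$ test function while ensuring that the slack $B^*-u(x_0)>0$ at the tip delivers enough negative integral mass to dominate the small positive penalty introduced by the quadratic perturbation of the cusp.
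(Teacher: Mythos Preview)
Your argument is correct and follows the same sliding--cusp strategy as the paper, but you execute two steps differently. First, to arrange strict touching the paper replaces $A$ by $A+\epsilon$ and lets $\epsilon\to 0$ at the end, whereas you keep the $A$-cusp and add a quadratic bump $\delta|x-x^*|^2$; both are legitimate ways to produce an admissible test function in Definition~\ref{subsupdefn}. Second, and more interestingly, the contradictions are drawn from different sources: the paper deduces $u=\mathcal{C}$ on the full line through $\bar x$ in the direction $v$, and then notes this is impossible because that line eventually leaves $\Omega$ where $\mathcal{C}>f$ (or, equivalently, because $\mathcal{C}\to\infty$ while $u\le C_0$). You instead exploit directly that the assumption $B^*>u(x_0)$ forces $(u-\mathcal{C}_{x_0,B^*})(x_0)<0$ at the tip, which lies on the line of integration at $\eta=|x^*-x_0|>r$ and by upper semicontinuity gives a fixed negative mass in the tail integral that the $O(\delta)$ perturbation cannot overcome. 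Your route is a bit more self-contained (no $\epsilon\to 0$ passage, no need to track the ray into $\R^N\setminus\Omega$), while the paper's avoids the perturbation bookkeeping; both rely on Lemma~\ref{lemma:fund solution} and are equivalent in strength.
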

\begin{proof}
As we said above, this theorem is proved through comparison with cusps. 

Fix $x_0\in \Omega$. We will show that, for any $x\in \mathbb{R}^N$,
\begin{align}
\label{eq:Holder}
u(x)\leq u(x_0)+A|x-x_0|^{2s-1}.
\end{align}
Choosing $x \in \Omega$ and exchanging the role of $x$ and $x_0$, this will prove the result.

Fix $\epsilon>0$ and consider cusps of the form $\mathcal{C}(x)=B+(A+\epsilon)|x-x_0|^{2s-1}$. Thanks to assumption \eqref{eq:control f}
we see that, if we choose $B=u(x_0)$, then $\mathcal C>f$ on $\R^N\setminus \Omega$. This implies that, if we first choose $B=C_0$ with $C_0$ as in \eqref{eq:control f}
so that $\mathcal C \geq u$ on the whole $\R^n$, and then we lower $B$ until $\mathcal C$ touches $u$ from above at a point $\bar x$, then $\bar x \in\Omega$.

We claim that $\bar x=x_0$. Indeed, if not, $\mathcal C$ would be smooth in a neighborhood of $\bar x$,
and we can use it as a test function to construct $\tilde{u}$ as in (\ref{subsupcut}). (Observe that, strictly speaking, we do not necessarily have $\mathcal{C}(x)>u(x)$
when $x\neq \bar{x}$, but this can be easily fixed by an easy approximation argument.)
Since $u$ is a subsolution it must be that $\I\tilde{u}(\bar{x})\geq 0$. So, for any $r\in(0,|\bar x-x_0|)$ we have
\begin{align}
0\leq \I\tilde{u}(\bar{x})&=
\int_{0}^r\frac{\mathcal{C}(\bar{x}+\eta v)+\mathcal{C}(\bar{x}-\eta v)-2\mathcal{C}(\bar{x})}{\eta^{1+2s}}\,d\eta\label{compareconeseq}\\
&\ \ \ \ \ +\int_r^\infty\frac{u(\bar{x}+\eta v)+u(\bar{x}-\eta v)-2u(\bar{x})}{\eta^{1+2s}}\,d\eta\notag\\
&\leq\int_{0}^r\frac{\mathcal{C}(\bar{x}+\eta v)+\mathcal{C}(\bar{x}-\eta v)-2\mathcal{C}(\bar{x})}{\eta^{1+2s}}\,d\eta\notag\\
&\ \ \ \ \ +\int_r^\infty\frac{\mathcal{C}(\bar{x}+\eta v)+\mathcal{C}(\bar{x}-\eta v)-2\mathcal{C}(\bar{x})}{\eta^{1+2s}}\,d\eta=0,\notag
\end{align}
where $v\in S^{N-1}$ denotes the direction of $\nabla \mathcal{C}(\bar{x})\neq 0$.  This chain of inequalities implies
\begin{align}
&\int_r^\infty\frac{u(\bar{x}+\eta v)+u(\bar{x}-\eta v)-2u(\bar{x})}{|\eta|^{1+2s}}\,d\eta\notag\\
&\ \ \ \ \ \ \ \ \ \ =\int_r^\infty\frac{\mathcal{C}(\bar{x}+\eta v)+\mathcal{C}(\bar{x}-\eta v)-2\mathcal{C}(\bar{x})}{|\eta|^{1+2s}}\,d\eta=0.\notag
\end{align}
Using $\mathcal{C}(\bar{x})\geq u(\bar{x})$ with equality at $\bar{x}$ we conclude $u=\mathcal{C}$ for all $x$ on the ray $\{\bar x+\eta v\}_{\eta\geq r}$,
contradicting $u=f$ in $\R^N\setminus \Omega$.

Thus $\bar x=x_0$, which implies
$$
u(x) \leq \mathcal C(x)=u(x_0)+(A+\epsilon)|x-x_0|^{2s-1}\qquad \forall\,x\in \R^N.
$$
Letting $\epsilon \to 0$ we finally obtain \eqref{eq:Holder}, which concludes the proof.
\end{proof}

The following result can be thought as the analogous of the absolutely minimizing property of infinity harmonic functions \cite{MR1861094,MR1218686}.

\begin{corollary}
\label{cor:Holder sol}
Let $u$ be a solution of $\I$ in the sense of Definition \ref{subsupdefn} inside a bounded open set $\Omega$, with $u=f$ on $\R^N\setminus \Omega$, $f\in C^{0,2s-1}(\R^N\setminus \Omega)$.
Then $u\in C^{0,2s-1}(\R^N)$ and
$$
[u]_{C^{0,2s-1}(\R^N)} \leq [f]_{C^{0,2s-1}(\R^N\setminus \Omega)}.
$$
\end{corollary}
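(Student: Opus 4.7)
The plan is to derive this H\"older estimate as a direct consequence of Theorem \ref{holdcontthrm}. Set $A:=[f]_{C^{0,2s-1}(\R^N\setminus\Omega)}$; once hypothesis \eqref{eq:control f} of that theorem is verified with this constant, applying it to $u$ (which is in particular a subsolution) produces $u(x)\leq u(x_0)+A|x-x_0|^{2s-1}$ for every $x_0\in\Omega$ and every $x\in\R^N$, while applying it to $-u$ (which is a subsolution because $\I(-u)=-\I u$, as is immediate from Definition \ref{IFLdefn} by changing $v$ into $-v$ in the sup/inf) produces the opposite one-sided inequality. Together with the H\"older assumption on $f$ (which handles the case when both arguments lie in $\R^N\setminus\Omega$), these two inequalities deliver the desired bound $[u]_{C^{0,2s-1}(\R^N)}\leq A$.

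The bulk of the argument is then the verification of \eqref{eq:control f}, which I would carry out through comparison with the cusps
\begin{equation*}
\mathcal{C}_z^\pm(x):=f(z)\pm A|x-z|^{2s-1},\qquad z\in\R^N\setminus\Omega.
\end{equation*}
By Lemma \ref{lemma:fund solution}, $\I\mathcal{C}_z^\pm\equiv 0$ on $\R^N\setminus\{z\}$, so in particular $\mathcal{C}_z^\pm$ are classical solutions (hence both sub- and supersolutions) of $\I$ on $\Omega$. The H\"older assumption on $f$ gives $\mathcal{C}_z^-\leq f\leq \mathcal{C}_z^+$ on $\R^N\setminus\Omega$. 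Since all functions involved are continuous and grow at infinity at most like $|x|^{2s-1}$ (hence strictly slower than $|x|^{2s}$), the comparison principle on compact sets (Theorem \ref{comparison}), applied in the bounded domain $\Omega$, yields
\begin{equation*}
\mathcal{C}_z^-(x)\leq u(x)\leq \mathcal{C}_z^+(x)\qquad \forall\,x\in\Omega,\ \forall\,z\in\R^N\setminus\Omega.
\end{equation*}
Taking the supremum over $z$ in the left inequality gives the cusp lower bound required by \eqref{eq:control f}, and evaluating the right inequality at any fixed $z_0\in\partial\Omega$ yields a uniform upper bound $u\leq C_0<\infty$ on the bounded set $\Omega$. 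The mirror bounds needed for the application to $-u$ (with boundary data $-f$) are encoded in exactly the same pair of inequalities.

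I do not anticipate any serious obstacle: the proof amounts to a bookkeeping exercise that leverages the cusp-barrier philosophy already established for Theorem \ref{holdcontthrm}. The only mildly subtle point is that a single application of Theorem \ref{holdcontthrm} produces only a one-sided H\"older inequality anchored at a point of $\Omega$, which is why the symmetrized application to both $u$ and $-u$ is genuinely used to close the two-sided estimate in the mixed regime $x_0\in\Omega$, $x\in\R^N\setminus\Omega$.
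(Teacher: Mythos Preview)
Your proposal is correct and follows essentially the same route as the paper: verify hypothesis \eqref{eq:control f} by comparing $u$ with the cusps $\mathcal{C}_z$ via Theorem \ref{comparison}, then invoke Theorem \ref{holdcontthrm}. The paper obtains $C_0$ simply from continuity of $u$ on the bounded set $\overline\Omega$ rather than from $\mathcal{C}_z^+$, and leaves the symmetrized application to $-u$ implicit (it is covered by the remark preceding Theorem \ref{holdcontthrm} about supersolutions), but otherwise the arguments coincide.
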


\begin{proof}
We want to show that \eqref{eq:control f} holds with $C_0=\|u\|_{L^\infty(\Omega)}$
and  $A=[f]_{C^{0,2s-1}(\R^N\setminus \Omega)}$.  Since $u$ is a solution it is continuous and $C_0<\infty$.

Observe that, for any $z\in \R^N\setminus \Omega$ and $B \in\R$, the cone
$$
\mathcal{C}_z(x)=f(z)-A|x-z|^{2s-1}
$$
solves $\I \mathcal{C}_z=0$ inside $\Omega$, and $\mathcal C_z \leq f$ on $\R^N\setminus \Omega$ (by the definition of $[f]_{C^{0,2s-1}(\R^N\setminus \Omega)}$).
So, we can apply Theorem \ref{comparison} to conclude that $\mathcal C_z\leq u$, so \eqref{eq:control f} follows by the arbitrariness of $z\in \R^N\setminus \Omega$.
\end{proof}

\subsection{Stability}
The goal of this section is to show that the condition of being a ``(sub/super)solution at non-zero gradient points'' (see the end of Section \ref{derivation} for the definition)
is stable under uniform limit.  First we establish how subsolutions and supersolutions can be combined.
\begin{lemma}\label{maxsubissub}
The maximum [resp. minimum] of two subsolutions [resp. supersolutions] is a subsolution [resp. supersolution]. 
\end{lemma}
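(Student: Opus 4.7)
The plan is to prove the statement for subsolutions; the supersolution case follows by applying the subsolution argument to $-u_1$, $-u_2$ (or equally well by the symmetric argument). The core idea is monotonicity: both the linear integral used at non-zero-gradient points and the $\sup$/$\inf$ combination used at zero-gradient points are monotone in their argument, so replacing a smaller function by a pointwise larger one (with equality at the base point) can only increase the value of $\I$.

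First I would fix a point $x_0$, a radius $r>0$, and a test function $\phi\in C^{1,1}(x_0)\cap C(\overline{B_r(x_0)})$ touching $u:=u_1\vee u_2$ from above at $x_0$, with $\phi(x_0)=u(x_0)$ and $\phi(x)>u(x)$ for $x\in B_r(x_0)\setminus\{x_0\}$. Without loss of generality $u(x_0)=u_1(x_0)$. Since $u\geq u_1$ everywhere, the inequalities $\phi(x_0)=u_1(x_0)$ and $\phi(x)>u(x)\geq u_1(x)$ for $x\in B_r(x_0)\setminus\{x_0\}$ show that $\phi$ is an admissible test function touching $u_1$ (strictly) from above at $x_0$. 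Therefore, since $u_1$ is a subsolution, $\I\tilde u_1(x_0)\geq 0$, where $\tilde u_1:=\phi$ on $B_r(x_0)$ and $\tilde u_1:=u_1$ outside. Define analogously $\tilde u:=\phi$ on $B_r(x_0)$ and $\tilde u:=u$ outside; then $\tilde u\geq \tilde u_1$ on $\R^N$, with equality on $B_r(x_0)$ (and in particular at $x_0$).

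Now I split into the two cases of Definition \ref{IFLdefn}. If $\nabla\phi(x_0)\neq 0$, then $\I\tilde u(x_0)$ and $\I\tilde u_1(x_0)$ are both computed as the single integral along $v=\nabla\phi(x_0)/|\nabla\phi(x_0)|$, and since the pointwise inequality $\tilde u-\tilde u_1\geq 0$ holds with equality at $x_0$, the integrand defining $\I\tilde u(x_0)$ dominates that defining $\I\tilde u_1(x_0)$ pointwise in $\eta$; integrating gives $\I\tilde u(x_0)\geq \I\tilde u_1(x_0)\geq 0$. If $\nabla\phi(x_0)=0$, the same inequality $\tilde u-\tilde u_1\geq 0$ with equality at $x_0$ implies that, for every unit vector $y$,
\begin{equation*}
\int_0^\infty\frac{\tilde u(x_0+\eta y)-\tilde u(x_0)}{\eta^{1+2s}}\,d\eta\;\geq\;\int_0^\infty\frac{\tilde u_1(x_0+\eta y)-\tilde u_1(x_0)}{\eta^{1+2s}}\,d\eta,
\end{equation*}
and likewise for the integrals against $-\eta z$; taking $\sup_y$ and $\inf_z$ preserves these inequalities, giving again $\I\tilde u(x_0)\geq \I\tilde u_1(x_0)\geq 0$.

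The only subtlety is checking that $\phi$ strictly touches $u_1$ from above (not merely touches from above), which is needed because Definition \ref{subsupdefn} requires strict inequality away from the contact point; but this is automatic from $\phi>u\geq u_1$ off $x_0$, so no approximation argument is required. I do not expect any real obstacle: the whole proof is just monotonicity of $\I$ together with the elementary observation $u\geq u_1$ with equality at a point where $u_1$ attains the maximum.
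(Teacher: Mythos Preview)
Your proof is correct and follows the same approach as the paper's: touch $u=u_1\vee u_2$ from above by $\phi$, observe that $\phi$ then touches (strictly) whichever $u_i$ realizes the max at $x_0$, and use monotonicity of $\I$ in its argument to get $\I\tilde u(x_0)\geq\I\tilde u_i(x_0)\geq 0$. The paper's version is terser (it simply invokes ``the monotonicity properties of the integral in the operator'' without splitting into the $\nabla\phi(x_0)\neq 0$ and $\nabla\phi(x_0)=0$ cases), but the argument is the same.
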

\begin{proof}
Let $u_1$ and $u_2$ be subsolutions, we argue $w=u_1\vee u_2$ is a subsolution.  Let $x_0\in \Omega$.
If $\phi\in C^{1,1}(x_0)\cap BC(\mathbb{R}^N)$ touches $w$ from above at $x_0$ then it must either touch $u_1$ from above at $x_0$ or touch $u_2$ from above at $x_0$. Assuming
with no loss of generality that the first case happens, using the monotonicity properties of the integral in the operator we get
\begin{align}
\I\tilde{w}(x_0)\geq \I\tilde{u}_1(x_0)\geq 0,\notag
\end{align}
where $\tilde{w}$ and $\tilde{u}$ are described by (\ref{subsupcut}). The statement about supersolutions is argued the same way.
\end{proof}

\begin{theorem}\label{stability}
Let $\Omega\subset \mathbb{R}^N$ and $u_n$ be a sequence of ``subsolutions [resp. supersolutions] at non-zero gradient points''.
Assume that
\begin{itemize}
\item $u_n$ converges to a function $u_0$ uniformly,
\item there exist $\alpha <2s$, $C>0$, such that $|u_n(x)| \leq C(1+|x|)^\alpha$ for all $n$.
\end{itemize}
Then $u_0$ is a ``subsolution [resp. supersolution] at non-zero gradient points''.
\end{theorem}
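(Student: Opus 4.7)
The plan is to adapt the classical viscosity stability argument to our nonlocal setting, controlling the integrand at the singularity $\eta=0$ by the $C^{1,1}$ modulus of the test function and at infinity by the growth bound on the $u_n$. I will argue the subsolution case; the supersolution case is symmetric.

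Let $x_0\in \Omega$ and take $\phi\in C^{1,1}(x_0)\cap C(\bar B_r(x_0))$ touching $u_0$ from above at $x_0$ with $\nabla \phi(x_0)\neq 0$. Adding $\delta|x-x_0|^2$ to $\phi$ (and eventually sending $\delta\to 0$) we may assume the touching is strict; by a standard reduction to smooth test functions---essentially replacing $\phi$ with the smooth upper envelope $\psi(y)=\phi(x_0)+\nabla\phi(x_0)\cdot(y-x_0)+M|y-x_0|^2$ furnished by the $C^{1,1}$ bound at $x_0$---we may further assume $\phi$ is $C^2$ in a fixed neighborhood of $x_0$ with unchanged gradient at $x_0$. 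Since $\phi-u_0$ has a strict local minimum at $x_0$ on $\bar B_{r/2}(x_0)$ and $u_n\to u_0$ uniformly there, a standard argument yields points $x_n\in B_{r/2}(x_0)$ with $x_n\to x_0$ and constants $c_n=(\phi-u_n)(x_n)\to 0$ such that $\phi_n:=\phi-c_n$ touches $u_n$ from above at $x_n$; strictness is enforced by a further $\varepsilon_n|x-x_n|^2$ perturbation. Continuity of $\nabla \phi$ together with $\nabla\phi(x_0)\neq 0$ forces $\nabla\phi_n(x_n)\neq 0$ for $n$ large.

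The ``subsolution at non-zero gradient points'' hypothesis applied to $u_n$ with test function $\phi_n$ at $x_n$ then yields $\I \tilde u_n(x_n)\geq 0$, where $\tilde u_n$ is the cutoff from \eqref{subsupcut}. Written out,
\begin{equation*}
\int_0^\infty\frac{\tilde u_n(x_n+\eta v_n)+\tilde u_n(x_n-\eta v_n)-2\tilde u_n(x_n)}{\eta^{1+2s}}\,d\eta \;\geq\; 0,
\end{equation*}
with $v_n=\nabla\phi(x_n)/|\nabla\phi(x_n)|\to v_0$. I split the integral at $\eta=r/2$. On $[0,r/2]$ the integrand equals the expression for $\phi_n$, dominated uniformly in $n$ by $C\eta^{1-2s}$ from the common $C^2$ bound on $\phi$ in a neighborhood of $x_0$ (integrable since $s<1$). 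On $[r/2,\infty)$ the integrand involves $u_n$, which converges uniformly on compacta to $u_0$, while the growth hypothesis $|u_n(x)|\leq C(1+|x|)^\alpha$ with $\alpha<2s$ supplies a uniformly integrable majorant. Dominated convergence then transfers the inequality to the limit, giving $\I \tilde u_0(x_0)\geq 0$; finally sending $\varepsilon_n,\delta\to 0$ establishes the subsolution-at-non-zero-gradient property of $u_0$ at $x_0$.

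The main obstacle is the uniform-integrability estimate, which decouples into a near-zero piece (handled by a common $C^{1,1}$ modulus on the regularized test function) and a tail piece (handled by the hypothesis $\alpha<2s$, which is used essentially here). A more delicate point is the reduction to test functions smooth in a whole neighborhood: Definition \ref{C11defn} only imposes $C^{1,1}$ behavior at the single point $x_0$, so in order to speak of $\nabla\phi(x_n)$ and apply the subsolution condition of $u_n$ there, one must argue---via the upper-quadratic reduction described above---that the subsolution property for $u_0$ already follows from testing against this narrower, more regular class. The supersolution statement follows by the same argument with the obvious sign changes.
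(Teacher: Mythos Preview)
Your argument is correct and follows essentially the same route as the paper: perturb the test function to obtain touching points $x_n\to x_0$, use that $\nabla\phi(x_n)\neq 0$ for large $n$, apply the subsolution hypothesis for $u_n$, split the integral into a near-origin piece (controlled by the $C^{1,1}$ bound) and a tail piece (controlled by the growth hypothesis $\alpha<2s$), and pass to the limit by dominated convergence. The paper splits at the variable radius $r_n=r-|x_n-x_0|$ rather than your fixed $r/2$, but this is cosmetic.

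You are in fact more careful than the paper on one point: the paper tacitly writes $\nabla\phi(x_n)$ and invokes a $C^{1,1}$ bound at $x_n$, although Definition~\ref{C11defn} only guarantees such control at the single point $x_0$. Your reduction to $C^2$ test functions is the right way to close this, but your one-line justification (``replace $\phi$ by the upper paraboloid $\psi$'') is slightly misleading as stated: since $\psi\geq\phi$ one only gets $\I\tilde u_0^{\psi}(x_0)\geq \I\tilde u_0^{\phi}(x_0)$, which is the wrong direction. The reduction is nonetheless valid: once you know $\I\tilde u_0^{\psi,r'}(x_0)\geq 0$ for the paraboloid $\psi$ on \emph{every} small ball $B_{r'}$, you get $I(r',\infty)\geq -\tfrac{M}{1-s}(r')^{2-2s}$ for the tail $I$; combining this with $\phi\geq u_0$ on $(r',r)$ and the two-sided $C^{1,1}$ bound on $(0,r')$ yields $\I\tilde u_0^{\phi,r}(x_0)\geq -\tfrac{2M}{1-s}(r')^{2-2s}$, and letting $r'\to 0$ finishes. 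So the reduction you invoke is legitimate, just not for the reason you sketch.
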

\begin{proof}
We only prove the statement with subsolutions.

Let $\phi\in C^{1,1}(x_0)$ touch $u_0$ from above at $x_0$, with $\nabla\phi(x_0)\neq 0$, and $\phi>u_0$ on $B_r(x_0)\setminus \{x_0\}$.
Since $u_n$ converges to a function $u_0$ locally uniformly, for $n$ sufficiently large there exists a small constant $\delta_n$ such that
$\phi+\delta_n$ touches $u_n$ above at a point $x_n \in B_r(x_0)$. Define $r_n=r-|x_n - x_0|$. Observe that $x_n \to x_0$ as $n \to \infty$, so $r_n \to r$ as $n \to \infty$.
Define
\begin{align}
\tilde{u}_n=\left\{
\begin{array}{lcl}
\phi(x)+\delta_n &\text{if}& |x-x_n|<r_n,\\
u_n(x) &\text{if}& |x-x_n|\geq r_n.
\end{array}\right.\notag
\end{align}
Since $\nabla\phi(x_0)\neq 0$, taking $n$ large enough we can ensure $\nabla\phi(x_n)\neq 0$. 
So, since $u_n$ is a subsolution at non-zero gradient points, $\I\tilde{u}_n(x_n)\geq 0$. Let $v_n\in S^{N-1}$
denote the direction of $\nabla\phi(x_n)$ and $v_0$ denote the direction of $\nabla\phi(x_0)$. We have
\begin{align}
0&\leq \int_0^{r_n}\frac{\phi(x_n+\eta v_n)+\phi(x_n-\eta v_n)-2\phi(x_n)}{\eta^{2s+1}}\,d\eta\notag\\
&\ \ \ \ \ \ +\int_{r_n}^\infty\frac{u_n(x_n+\eta v_n)+u_n(x_n-\eta v_n)-2u_n(x_n)}{\eta^{2s+1}}\,d\eta.\notag
\end{align}
By the $C^{1,1}$ regularity of $\phi$, the integrand in the first integral on the right hand side is bounded by the integrable function $M\eta^{1-2s}$. 
By the assumption on the growth at infinity of $u_n$,
also the integrand in the second integral on the right hand side is bounded, independently of $n$, by an integrable function.
Finally, also $v_n\rightarrow v_0$ (as $x_n \to x_0$). Hence, by the local uniform convergence of $u_n$ and applying the dominated convergence theorem, we find
\begin{align} 
0&\leq \int_0^r\frac{\phi(x_0+\eta v_0)+\phi(x_0-\eta v_0)-2\phi(x_0)}{\eta^{2s+1}}\,d\eta\notag\\
&\ \ \ \ \ \ +\int_r^\infty\frac{u_0(x_0+\eta v_0)+u_0(x_0-\eta v_0)-2u_0(x_0)}{\eta^{2s+1}}\,d\eta\notag\\
&=\I\tilde{u}_0(x_0).\notag
\end{align}
This proves $\I u_0(x)\geq 0$, as desired. 
\end{proof}

\subsection{Improved Regularity}\label{sect:improved regularity}

The aim of this section is to establish a Liouville-type theorem which will allow to show that solutions belong to $C^{0,2s-1}_0$ (see Definition \ref{def:holder}). 
The strategy is similar to the blow-up arguments employed in \cite{MR1804769,MR1861094}.

\begin{lemma}\label{impreg1}
Let $u\in C^{0,2s-1}(\mathbb{R}^N)$ be a global ``solution at non-zero gradient points''.
Then $u$ is constant.
\end{lemma}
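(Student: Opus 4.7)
The plan is to argue by contradiction via a blow-up combined with a comparison against two carefully chosen cusps. Setting $L := [u]_{C^{0,2s-1}(\mathbb{R}^N)}$ and supposing $L > 0$, pick sequences $x_n, y_n$ with $(u(y_n) - u(x_n))/|y_n - x_n|^{2s-1} \to L$, set $r_n := |y_n - x_n|$, $v_n := (y_n - x_n)/r_n$, and define
$$u_n(z) := \frac{u(x_n + r_n z) - u(x_n)}{r_n^{2s-1}}.$$
A direct substitution $\eta \mapsto \eta / r_n$ in the kernel shows $\I u_n(z) = r_n\, \I u(x_n + r_n z)$, so each $u_n$ is again a global ``solution at non-zero gradient points''. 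Since $u_n(0) = 0$ and $|u_n(z)| \leq L |z|^{2s-1}$, Arzel\`a--Ascoli produces a locally uniform subsequential limit $u_n \to u_\infty$ with $v_n \to v_0 \in S^{N-1}$, $u_\infty(0) = 0$, and $u_\infty(v_0) = L$. The growth bound $|u_n(z)| \leq L(1+|z|)^{2s-1}$ verifies the hypothesis of Theorem \ref{stability} (since $2s-1 < 2s$), so $u_\infty$ is itself a global ``solution at non-zero gradient points''.

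Next I would compare $u_\infty$ with the cusp $\mathcal{C}(x) := L |x|^{2s-1}$. The H\"older bound at base point $0$ gives $u_\infty \leq \mathcal{C}$ on $\R^N$, with equality at both $0$ and $v_0$, and $\nabla \mathcal{C}(v_0) = L(2s-1) v_0 \neq 0$. (If the touching at $v_0$ is not strict, the standard quadratic perturbation $\mathcal{C} + \delta |x - v_0|^2$ with $\delta \downarrow 0$ remedies this.) Using $\mathcal{C}$ as a test function from above at $v_0$ and Lemma \ref{lemma:fund solution} (which gives $\I \mathcal{C}(v_0) = 0$), the inner contributions cancel and the subsolution condition reduces, for every $r \in (0,1)$, to
$$\int_r^\infty \frac{(u_\infty - \mathcal{C})(v_0 + \eta v_0) + (u_\infty - \mathcal{C})(v_0 - \eta v_0)}{\eta^{1+2s}}\, d\eta \geq 0.$$
Since $u_\infty \leq \mathcal{C}$ the integrand is non-positive, so it must vanish identically, yielding $u_\infty(t v_0) = L |t|^{2s-1}$ for every $t \in \R$.

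Now perform the complementary comparison from below with the dual cusp $\mathcal{C}^-(x) := L - L |x - v_0|^{2s-1}$: the H\"older bound at base point $v_0$ gives $u_\infty \geq \mathcal{C}^-$, with equality at $0$ (since $u_\infty(0) = 0 = L - L |v_0|^{2s-1}$), and $\nabla \mathcal{C}^-(0) = L(2s-1) v_0 \neq 0$. Repeating the previous argument in supersolution form at $0$ (using again $\I \mathcal{C}^-(0) = 0$ from Lemma \ref{lemma:fund solution}) forces $u_\infty(t v_0) = L - L |t-1|^{2s-1}$ for every $t \in \R$. Comparing the two expressions for $u_\infty(t v_0)$ at $t = 2$ gives $L \cdot 2^{2s-1} = 0$, contradicting $L > 0$. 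Hence $L = 0$ and $u$ is constant.

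The main technical point lies in the setup: verifying that the rescaling preserves the equation exactly (the exponent $2s-1$ in the numerator of $u_n$ is precisely what matches the kernel's $\eta^{1+2s}$ after $\eta \mapsto \eta / r_n$), meeting the growth hypothesis for Theorem \ref{stability} at infinity, and handling the non-strict touching by the cusps via the standard quadratic perturbation already invoked in the proof of Theorem \ref{holdcontthrm}. Once the blow-up is correctly in place, what drives the contradiction is simply that the two cusps --- one applied from above at $v_0$, one applied from below at $0$ --- force two manifestly incompatible explicit formulas for $u_\infty$ along the line $\R v_0$.
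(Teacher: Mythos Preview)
Your proof is correct and follows essentially the same approach as the paper: blow up along a sequence nearly saturating the H\"older seminorm, pass to a limit via Arzel\`a--Ascoli and Theorem \ref{stability}, then touch the limit from above and below by the two cusps $L|x|^{2s-1}$ and $L - L|x-v_0|^{2s-1}$, using Lemma \ref{lemma:fund solution} to force incompatible identities along the line $\mathbb{R}v_0$. The only cosmetic difference is that the paper first rotates so that $v_0 = e_1$, and leaves the final contradiction (``$\mathcal{C}^+ \neq \mathcal{C}^-$ along the $e_1$ axis'') implicit rather than evaluating at $t=2$.
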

\begin{proof}
Let $M=[u]_{C^{0,2s-1}(\mathbb{R}^N)}$.
Our goal is to prove $M=0$. By way of contradiction, assume $M> 0$. Then, there are two sequences $x_n\neq y_n$ such that 
\begin{align}
|u(x_n)-y(x_n)|>\left(M-\frac{1}{n}\right)|x_n-y_n|^{2s-1}.\notag
\end{align}
The assumptions of this lemma are preserved under translations, rotations, and the scaling $u(x)\rightarrow \lambda^{1-2s} u(\lambda x)$ for any $\lambda>0$.
Therefore, if $R_n:\R^N\to \R^N$ is a rotation such that $R_ne_1=\frac{y_n-x_n}{|y_n-x_n|}$, then the sequence of functions
$$
u_n(x)= |y_n-x_n|^{1-2s}\bigl[u(x_n+|y_n-x_n|R_nx)-u(x_n)\bigr]
$$
satisfies the assumptions of the lemma, and $u_n(e_1)>M-1/n$.  Then, Arzel\`a-Ascoli Theorem
gives the existence of a subsequence (which we do not relabel) and a function $u_0$ such that $u_n\rightarrow u_0$ uniformly on compact sets.
Moreover $u_0(0)=0$, $u_0(e_1)=M$, and Theorem \ref{stability} shows $u_0$ satisfies the assumptions of the lemma.

Let us observe that the cusp $\mathcal{C}^+=M|x|^{2s-1}$ touches $u_0$ from above at $e_1$, while $\mathcal{C}^-=-M|x-e_1|^{2s-1}+M$ touches $u_0$ from below at $0$.
Since $\nabla\mathcal{C}^+(e_1)\neq 0$ we may use it as a test function and arguing as in (\ref{compareconeseq})
we conclude $u=\mathcal{C}^+$ along the $e_1$ axis.  Similarly $\mathcal{C}^-$ touches $u$ from below at $0$, so we have also $u=\mathcal{C}^-$ along the $e_1$ axis.
However this is impossible since $\mathcal{C}^+\neq \mathcal{C}^-$ along the $e_1$ axis.
\end{proof}

\begin{corollary}
\label{cor:improved holder}
Let $\Omega\subset \R^N$ be open, and let $u\in C^{0,2s-1}(\Omega)$ ``solution at non-zero gradient points'' inside $\Omega$. Then $u\in C_{0,{\rm loc}}^{0,2s-1}(\Omega)$.
\end{corollary}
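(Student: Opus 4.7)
The plan is to argue by contradiction using a blow-up procedure that produces a non-constant global ``solution at non-zero gradient points'' in $C^{0,2s-1}(\R^N)$, thereby violating the Liouville-type result of Lemma \ref{impreg1}.

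Suppose the conclusion fails. Then there exist a compact set $K\subset \Omega$, a constant $\varepsilon_0>0$, and sequences $x_n\neq y_n$ in $K$ with $|x_n-y_n|\to 0$ such that
$$
|u(x_n)-u(y_n)|\geq \varepsilon_0\,|x_n-y_n|^{2s-1}.
$$
Passing to a subsequence, $x_n\to x_\infty\in K$, so there is $d>0$ with $B_d(x_n)\subset\Omega$ for all large $n$. Following the normalization used in Lemma \ref{impreg1}, let $R_n$ be a rotation sending $e_1$ to $(y_n-x_n)/|y_n-x_n|$, set $\lambda_n:=|y_n-x_n|$, and define
$$
u_n(x):=\lambda_n^{1-2s}\bigl[u(x_n+\lambda_n R_n x)-u(x_n)\bigr].
$$
Then $u_n(0)=0$, $|u_n(e_1)|\geq \varepsilon_0$, and the hypotheses are preserved under translations, rotations, and the scaling $v(x)=\lambda^{1-2s}u(\lambda x)$. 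The key point (which is what makes this argument work) is that $\I$ is $(2s-1)$-homogeneous in this sense: a change of variable in the defining integral gives $\I v(x)=\lambda\,\I u(\lambda x)$ at points where the gradient is nonzero, so each $u_n$ is a ``solution at non-zero gradient points'' on the expanding domains $\Omega_n:=\{x:x_n+\lambda_n R_nx\in\Omega\}$, which eventually contain any fixed ball. Similarly the Hölder seminorm satisfies $[u_n]_{C^{0,2s-1}(\Omega_n)}\leq [u]_{C^{0,2s-1}(K')}$ for a suitable compact $K'\subset\Omega$.

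Since $u_n(0)=0$ and the Hölder seminorms of $u_n$ are bounded uniformly, Arzelà--Ascoli produces (along a subsequence) a limit $u_0\in C^{0,2s-1}(\R^N)$ with $u_n\to u_0$ locally uniformly, $u_0(0)=0$, and $|u_0(e_1)|\geq \varepsilon_0$. From $|u_n(x)|\leq [u_n]_{C^{0,2s-1}}|x|^{2s-1}$ the growth hypothesis of Theorem \ref{stability} is met with exponent $\alpha=2s-1<2s$. Applying Theorem \ref{stability} at each point of $\R^N$ (each point lies inside $\Omega_n$ for $n$ large, where $u_n$ satisfies the equation), we conclude that $u_0$ is a global ``solution at non-zero gradient points''. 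But then Lemma \ref{impreg1} forces $u_0$ to be constant, contradicting $|u_0(e_1)-u_0(0)|\geq\varepsilon_0>0$.

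The main technical point is the scaling invariance of $\I$ and the fact that the stability theorem can be applied locally, i.e. point by point inside the exhausting domains $\Omega_n$, to transfer the sub/supersolution property of the $u_n$ to the limit $u_0$ on all of $\R^N$. Once this is in place, the conclusion is immediate from the Liouville-type statement in Lemma \ref{impreg1}.
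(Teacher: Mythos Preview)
Your proof is correct and follows essentially the same blow-up/compactness/Liouville argument as the paper: both contradict the conclusion by rescaling around points $x_n,y_n$ with $|x_n-y_n|\to 0$ and fixed H\"older quotient, extract a limit via Arzel\`a--Ascoli and Theorem \ref{stability}, and then invoke Lemma \ref{impreg1}. The only minor imprecision is the remark that $[u_n]_{C^{0,2s-1}(\Omega_n)}\leq [u]_{C^{0,2s-1}(K')}$ for a \emph{compact} $K'$; since $\Omega_n$ exhausts $\R^N$ you actually need the full seminorm $[u]_{C^{0,2s-1}(\Omega)}$, which is finite by hypothesis, so this does not affect the argument.
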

\begin{proof}
We have to prove that, for any $\Omega'\subset \Omega$ with $d(\Omega',\partial\Omega)>0$,
\begin{align}
\lim_{\delta \to 0} \sup_{x,y \in\Omega',\, |x-y|\leq \delta}\frac{|f(x)-f(y)|}{|x-y|^{2s-1}}=0.\notag
\end{align}
Assume by contradiction this is not the case. Then we can find as sequence of points $x_n\neq y_n \in \Omega'$, with $|x_n - y_n| \leq 1/n$, such that
$$
\frac{|u(x_n)-u(y_n)|}{|x_n-y_n|^{2s-1}} \geq c_0 >0.
$$
Let us define the sequence of functions
$$
u_n(x)=|y_n-x_n|^{1-2s}\bigl[u(x_n+|y_n-x_n|R_nx)-u(x_n)\bigr],
$$
where $R_n:\R^N\to \R^N$ is a rotation such that $R_ne_1=\frac{y_n-x_n}{|y_n-x_n|}$.
Then $u_n$ are ``solution at non-zero gradient points'' inside the ball $B_{\delta/|y_n - y_n|}$ (since $B_\delta(x_n)\subset \Omega$). Moreover,
\begin{equation}
u_n(0)=0, \qquad u_n(e_1)\geq c_0. 
\end{equation}
Let us observe that $\frac{d(\Omega',\partial\Omega)}{|y_n - y_n|}\to \infty$ as $n \to \infty$.
So, as in the proof of Lemma \ref{impreg1},  by the uniform H\"older continuity of $u_n$ we can combine Arzel\`a-Ascoli Theorem with the stability
Theorem \ref{stability} to extract a subsequence with limit $u_0$ which satisfies the assumptions of Lemma \ref{impreg1}. Hence $u_0$ is constant,
which is impossible since $u_0(0)=0$ and $u_0(e_1) \geq c_0$.
\end{proof}

\section{A Monotone Dirichlet Problem}\label{monotone}
For the ``$i$th'' component of $x$ write $x_i$ and use $e_i$ to denote the unit vector in the ``$i$'' direction.
We take $\Omega$ as a strip orthogonal to the $e_1$ axis described as follows:
Consider two maps $\Gamma_1,\Gamma_2:\mathbb{R}^{N-1}\rightarrow \mathbb{R}$ which define the boundaries
\begin{align}
\partial\Omega^{-}&=\{(\Gamma_1(\hat{x}),\hat{x}_1,...,\hat{x}_{n-1}):\hat{x}\in \mathbb{R}^{N-1}\},\notag\\
\partial\Omega^{+}&=\{(\Gamma_2(\hat{x}),\hat{x}_1,...,\hat{x}_{n-1}):\hat{x}\in \mathbb{R}^{N-1}\}.\notag
\end{align}
We assume $\Gamma_1$ and $\Gamma_2$ are $C^{1,1}$ and uniformly separated. More precisely:
\begin{itemize}
\item There are constants $M>m>0$ such that, for all $\hat{x}\in \mathbb{R}^{N-1}$,
\begin{align}
\begin{array}{c}
0\leq \Gamma_1(\hat{x})  \leq \Gamma_2(\hat{x}) \leq M,\\
\Gamma_2(\hat{x})  -\Gamma_1(\hat{x}) \geq m.
\end{array}\label{eq:m M}
\end{align}
\item There exists a constant $C_1$ such that 
\begin{align}
\sup_{\hat{x}\in \mathbb{R}^{N-1}}|\partial_k\Gamma_i(\hat{x})|+|\partial_l\partial_k\Gamma_i(\hat{x})|\leq C_1\label{lipass}
\end{align}
for each $i=1,2$ and $k,l=1,...,n-1$.  
\end{itemize}
We also use the notation 
\begin{align}
\Omega^{c,-}=\{x\in \mathbb{R}^N : x_1\leq \Gamma_1(x_2,..., x_n)\},\notag\\
\Omega^{c,+}=\{x\in \mathbb{R}^N : x_1\geq \Gamma_2(x_2,..., x_n)\},\notag
\end{align}
to denote the two connected components of $\R^N\setminus \overline\Omega$.

\begin{figure}
\centerline{\epsfysize=2.5truein\epsfbox{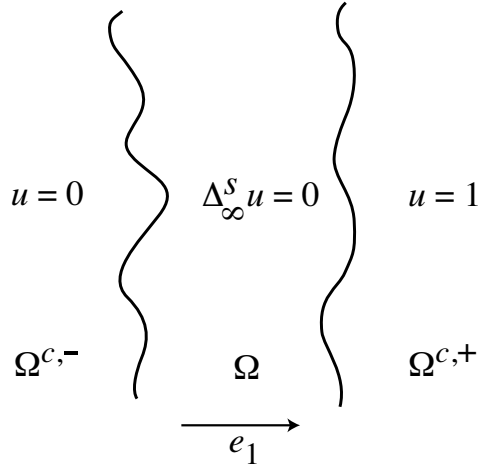}}
\caption{The monotone Dirichlet problem.}
\label{fig2}
\end{figure}

Consider the problem
\begin{align}
\left\{\begin{array}{cclcl}
\I u(x)&=&0 &\text{if}  &x\in \Omega,\\
u(x)&=&1 &\text{if} &x\in \Omega^{c,+},\\
u(x)&=&0 &\text{if} &x\in \Omega^{c,-}.
\end{array}\right.\label{PDEm}
\end{align}
Following Perron's method, we will show the supremum of subsolutions is a solution for (\ref{PDEm}) in the sense of Definition \ref{subsupdefn}.

More precisely, consider the family of subsolutions $\mathcal{F}$ given by
\begin{align}
u\in \mathcal{F} \ \ \ \ \ \text{if} \ \ \ \ \ 
\left\{\begin{array}{cclcl}
\I u(x)&\geq&0 &\text{if}  &x\in \Omega,\\
u(x)&\leq&0 &\text{if} &x\in \Omega^{c,-},\\
u(x)&\leq&1 &\text{if}  &x\in \Omega\cup\Omega^{c,+}.
\end{array}\right.\label{monotonefamily}
\end{align}
(Recall that, by Definition \ref{subsupdefn}, the set of functions in $\mathcal{F}$ are continuous inside $\Omega$ by assumption.)
The function $u\equiv 0$ belongs to $\mathcal F$, so the family is not empty. Moreover, every element of $\mathcal F$ is bounded above by $1$. So, we define
our solution candidate
\begin{align}
U(x)=\sup\{u(x): u\in\mathcal{F}\}.\notag
\end{align}
Let us remark that, since the indicator function of $\Omega^{c,+}$ belongs to $\mathcal F$, we have
\begin{equation}
\label{eq:bdry U}
U=0 \quad \text{in } \Omega^{c,-}, \qquad U=1 \quad \text{in } \Omega^{c,+}.
\end{equation}
The remainder of this section is devoted to proving the following theorem (recall Definition \ref{def:holder}):
\begin{theorem}\label{PDEmthrm}
Let $\Omega$ satisfy the above assumptions.  Then $U(x) \in C^{0,2s-1}(\R^N)\cap C_{0,loc}^{0,2s-1}(\Omega)$, and it is the unique solution to the problem (\ref{PDEm}).
\end{theorem}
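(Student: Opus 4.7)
The plan is Perron's method: the candidate $U=\sup\mathcal F$ will be shown to be a solution, and uniqueness will follow from the comparison principles of the paper. Three inputs are required beyond the general Perron machinery: (i) global $(2s-1)$-H\"older regularity of $U$; (ii) continuity of $U$ up to $\partial\Omega$ with the prescribed values; and (iii) strict monotonicity of $U$ in the $e_1$ direction. Step (iii) is the key, as it guarantees $\nabla U\neq 0$ throughout $\Omega$ in the viscosity sense, thereby bypassing the instability of $\I$ at zero-gradient points and making the standard Perron bump argument go through.

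For (i), observe that $f=\chi_{\Omega^{c,+}}$ is $(2s-1)$-H\"older on $\R^N\setminus\Omega$ with seminorm at most $m^{-(2s-1)}$, since the two level sets are separated by distance at least $m$ by \eqref{eq:m M}. Theorem \ref{holdcontthrm} then yields $[u]_{C^{0,2s-1}(\R^N)}\leq m^{-(2s-1)}$ for every $u\in\mathcal F$, and this uniform bound passes to the supremum $U$; once $U$ is known to be a solution, Corollary \ref{cor:improved holder} upgrades the regularity to $C^{0,2s-1}_{0,\mathrm{loc}}(\Omega)$. For (ii), the $C^{1,1}$ hypothesis \eqref{lipass} on $\Gamma_1,\Gamma_2$ supplies an exterior tangent ball of uniform radius $\rho>0$ at every boundary point. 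From such a ball one builds an explicit barrier in $\mathcal F$ of the form $w(x)=\bigl(1-A(|x-y_0|-\rho)_+^{2s-1}\bigr)\vee 0$, extended to agree with the boundary data outside; using Lemma \ref{lemma:fund solution} and the fact that truncated radial cusps are genuine subsolutions away from the tip, one checks $w\in\mathcal F$ for $A$ large (depending only on $m,M,C_1,s$), hence $U(x)\geq 1-A|x-x_0|^{2s-1}$ near each $x_0\in\partial\Omega^+$. A symmetric construction gives the analogous upper bound near $\partial\Omega^-$, so in particular $U$ is continuous at $\partial\Omega$ with the correct values and with quantitative decay/growth rates.

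For (iii), consider the translate $U_t(x):=U(x+te_1)$ with $t>0$. Since translating to the right maps $\Omega^{c,+}$ into itself and $\Omega^{c,-}$ into $\Omega\cup\Omega^{c,-}$, one directly verifies $U_t\geq U$ on $\R^N\setminus\Omega_t$, and translation-invariance of $\I$ together with comparison on exhausting bounded subdomains (the behavior at infinity being controlled by the uniform H\"older bound) extends $U_t\geq U$ to $\R^N$. Combining this inequality with the quantitative barriers of step (ii) then yields strict uniform monotonicity in $e_1$ with a locally positive lower bound on $U(x+te_1)-U(x)$, so that $\nabla U\neq 0$ at every point of $\Omega$ in the viscosity sense. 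With this in hand, the remaining Perron arguments are standard: the upper semicontinuous envelope of $U$ coincides with $U$ by H\"older continuity, and Lemma \ref{maxsubissub} combined with a monotone-approximation argument shows $U$ is a subsolution; if $U$ failed to be a supersolution at some $x_0\in\Omega$, the non-vanishing gradient would permit a small upward bump of $U$ near $x_0$ producing an element of $\mathcal F$ strictly above $U(x_0)$, contradicting maximality. Uniqueness then follows from the comparison principle for the monotone Dirichlet problem. The principal obstacle in this plan is step (iii): non-locality couples distant values of $U$, so strict monotonicity must be propagated across the unbounded strip from $\partial\Omega$ via a careful marriage of the boundary barriers with translation-comparison, and it is here that the $C^{1,1}$ regularity of $\partial\Omega$ plays an essential role through the exterior tangent ball condition.
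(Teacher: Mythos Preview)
Your overall architecture matches the paper's: Perron candidate, H\"older regularity via cusps, uniform monotonicity to force nonzero gradients, then the standard bump argument and the comparison principle Theorem~\ref{monocompthrm}. The serious gap is step (iii), precisely where you say the principal obstacle lies.

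First, the argument you give for $U_t\geq U$ is circular. To apply comparison on bounded subdomains you need one side to be a subsolution and the other a supersolution, but at this stage $U$ is only a pointwise supremum of subsolutions; you have not yet shown it is either. The paper avoids this by working at the level of the family $\mathcal F$: for $u\in\mathcal F$ and $y$ in a cone $C^+$ determined by the Lipschitz constant of $\partial\Omega$, the translate $u(\cdot-y)$, corrected on the boundary, is shown to lie in $\mathcal F$ again (Proposition~\ref{prop:monotone}). Monotonicity of $U$ then follows from the definition of the sup, with no comparison principle invoked.

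Second, and more fundamentally, the passage ``combining this inequality with the quantitative barriers of step (ii) then yields strict uniform monotonicity'' hides the entire content of the paper's Lemma~\ref{unifmonotone}. Weak monotonicity $U_t\geq U$ together with boundary growth does not by itself produce a quantitative lower bound $U(x+he_1)-U(x)\geq\beta h^\alpha$ with $\alpha<2$ in the interior of the strip, which is exactly what Lemma~\ref{notouchlem} needs. The paper's mechanism is to take a subsolution $u^\delta$ close to $U(x_0)$, shift and raise it to $u^\delta(\cdot-he_1)+\beta h^{1+s}$, and verify directly that the resulting function (after correcting it in the $h$-neighborhood of $\partial\Omega$) still lies in $\mathcal F$. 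This computation balances a loss of order $\beta h^{1+s}$ against a gain coming from the boundary growth of $u^\delta$; the gain is of order $h^{1+s}$ precisely because the growth at $\partial\Omega$ is $d^s$, not $d^{2s-1}$. This is why the paper builds the sharper $d^s$ barriers (Lemmas~\ref{growth1construction}--\ref{growth2}) rather than the $(2s-1)$-barriers you sketch in step (ii). Your barrier $w(x)=(1-A(|x-y_0|-\rho)_+^{2s-1})\vee0$ is also not covered by Lemma~\ref{lemma:fund solution}, which concerns cusps $|x-x_0|^{2s-1}$ and not their radial translates, so the claim that it is a subsolution needs its own argument.

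In short, step (iii) as written is not an argument but a statement of what must be proved; the paper's Sections~4.2--4.3 contain the actual work.
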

This theorem is a combination of Lemma \ref{Usubsolnlemma} and Theorems \ref{monoextthrm} and \ref{monocompthrm} which are proved in the following sections.
The strategy of the proof is to find suitable subsolutions and supersolutions to establish specific growth and decay rates of $U$, which will in turn imply a uniform monotonicity in the following sense:
\begin{definition}\label{unifmonotonedefn}
We say a function $u:\Omega\rightarrow \mathbb{R}$ is uniformly monotone in the $e_1$ direction with exponent $\alpha>0$ and constant $\beta>0$ if the following statement holds:
For every $x\in\Omega$ there exists $h_0>0$ such that 
\begin{align}
u(x)+\beta h^\alpha\leq u(x+e_1h)\qquad \forall\, 0 \leq h \leq h_0.\notag
\end{align}
\end{definition}

By constructing suitable barriers we will show $U$ grows like $d^s(x,\partial\Omega^-)$ near $\partial\Omega^-$,
and decays like $1-d^s(x,\partial\Omega^+)$ near $\partial\Omega^+$.  This sharp growth near the boundary influences the solution in the interior in such a way that $U$
is uniformly monotone with exponent $\alpha=1+s$ away from $\partial\Omega$. As shown in Lemma \ref{notouchlem}, this uniform monotonicity implies we can only touch $U$ by test
functions which have a non-zero derivative. 
Thanks to this fact, the operator $\I$ will be stable under uniform limits (see Theorem \ref{stability}). This will allow to prove that
$U$ is a solution to the problem. 

Concerning uniqueness, let us remark that $\Omega$ is not a compact set,
so we cannot apply our general comparison principle (see Theorem \ref{comparison}). However, in this specific situation
we will be able to take advantage of the fact that $\Omega$ is bounded in the $e_1$ direction, that our solution $U$ is uniformly monotone in that direction, and that
being a subsolution or supersolution is stable under translations, 
to show a comparison principle for this problem (Theorem \ref{monocompthrm}).  This comparison principle implies uniqueness of the solution.

\subsection{Basic Monotone Properties of $U$}\label{monosec}
Let $L$ denote the Lipschitz constant of $\Gamma_1$ and $\Gamma_2$ (see assumption (\ref{lipass})).  Set $\theta={\rm arccot}(L)\in (0,\frac{\pi}{2})$,
and  consider the open cone
\begin{align}
C^+=\left\{x\in\mathbb{R}^N\setminus \{0\}: \cos(\theta)< \frac{ x\cdot e_1}{|x|} \leq 1\right\}.\label{monocone}
\end{align}
If $x_0\in\partial\Omega^-$ the cone $x_0+C^+$ does not intersect $\partial\Omega^-$ except at $x_0$, that is $x_0+C^+\cap \Omega^{c,-}=\{x_0\}$.

The aim of this section is to show that $C^+$ defines the directions of monotonicity for $U$. More precisely, we want to prove the following:
\begin{proposition}
\label{prop:monotone}
$U(x+y)\geq U(x)$ for all $x \in \R^N$, $y \in C^+$. 
\end{proposition}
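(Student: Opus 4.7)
The plan is to apply Perron's machinery. For any $u\in\mathcal{F}$ and any $y\in C^+$, I would construct a new function in $\mathcal{F}$ that dominates $u(\cdot-y)$; taking the supremum in $\mathcal{F}$ then gives $U(z)\ge U(z-y)$, which is precisely $U(x+y)\ge U(x)$.

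The key geometric input is that $y\in C^+$ forces $y_1>L|\hat y|$, where $L$ is the common Lipschitz constant of $\Gamma_1,\Gamma_2$ (this comes from the identity $\cot\theta=L$). A one-line Lipschitz calculation then shows the inclusion $\Omega^{c,-}-y\subset \Omega^{c,-}$: translating the lower complement backward by $y\in C^+$ keeps it inside itself. I would then set
\[
v(z):=u(z-y)\vee 0.
\]
Using $u\le 1$ everywhere and the inclusion just noted, $v\le 1$ on $\R^N$ and $v\equiv 0$ on $\Omega^{c,-}$, so the pointwise sign conditions in (\ref{monotonefamily}) are automatic.

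The content of the argument lies in verifying $\I v\ge 0$ inside $\Omega$. I would split into two cases. In \emph{Case A}, where $z_0\in\Omega$ and also $z_0-y\in\Omega$, the function $u(\cdot-y)$ is a subsolution on a neighborhood of $z_0$ by translation invariance of $\I$, the constant $0$ is trivially a subsolution, and Lemma \ref{maxsubissub} yields that $v$ is a subsolution at $z_0$. In \emph{Case B}, where $z_0\in\Omega$ but $z_0-y\notin\Omega$, the point $z_0-y$ must lie in $\Omega^{c,-}$ (it cannot jump across the upper wall since $y_1>0$), so $u(z_0-y)\le 0$ and $v(z_0)=0$, while $v\ge 0$ globally. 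Any $C^{1,1}$ test function $\phi$ touching $v$ from above at $z_0$ satisfies $\phi(z_0)=0$ with $\phi\ge 0$ nearby, so the expansion in Definition \ref{C11defn} forces $\nabla\phi(z_0)=0$. In the zero-gradient branch of Definition \ref{IFLdefn}, the function $\tilde v$ built as in (\ref{subsupcut}) satisfies $\tilde v\ge 0=\tilde v(z_0)$ everywhere; hence every integrand is non-negative in every direction, both the $\sup$ and $\inf$ terms are non-negative, and $\I\tilde v(z_0)\ge 0$ follows at once.

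The main obstacle is precisely Case B. A naive translation argument fails because $\Omega$ is not translation-invariant under $C^+$: $u(\cdot-y)$ is a subsolution only on the strictly smaller set $\Omega\cap(\Omega+y)$. The rescue is that on the missing sliver $\Omega\setminus(\Omega+y)$, the boundary condition forces $u(\cdot-y)\le 0$, so truncating at zero produces a function that attains a local minimum there, and the favorable sign in the zero-gradient formula then gives the subsolution inequality essentially for free.
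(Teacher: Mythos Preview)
Your proof is correct and follows essentially the same strategy as the paper: translate a subsolution by $-y$, modify it so that the result lies in $\mathcal{F}$, and conclude by taking the supremum. The paper normalizes the boundary values of $u^\delta$ before translating, whereas you truncate at $0$ after translating; your explicit treatment of Case~B (where $z_0-y\in\Omega^{c,-}$) via the zero-gradient branch makes precise a step the paper's proof glosses over.
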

Thanks to this result, in the sequel, whenever $y\in S^{N-1}\cap C^+$ (that is, $\cos(\theta)<y\cdot e_1\leq 1$),
we will say that such direction lies ``in the cone of monotonicity for $U$''.

\begin{proof}
Fix $y\in C^+$. We want to show that
$$
U(x+y) \geq U(x)\qquad \forall \, x \in \R^N.
$$
Thanks to \eqref{eq:bdry U} and the $L$-Lipschitz regularity of $\Gamma_1$ and $\Gamma_2$,
it suffices to prove the results for $x\in \Omega$. Fix $x_0\in \Omega$,
and let $u^\delta\in\mathcal{F}$ be a sequence of subsolutions such that $U(x_0)\leq u^\delta(x_0)+\delta$, with $\delta\to 0$.
Since increasing the value of $u^\delta$ outside $\Omega$ increases the value of $\I u^\delta$ inside $\Omega$, we may assume the subsolutions satisfy
\begin{align}
u^\delta(x)=\left\{
\begin{array}{lcl}
1 &\text{if} &x\in \Omega^{c,+},\\
0 &\text{if} &x\in \Omega^{c,-}.
\end{array}\right.\notag
\end{align}

\begin{figure}
\centerline{\epsfysize=2.5truein\epsfbox{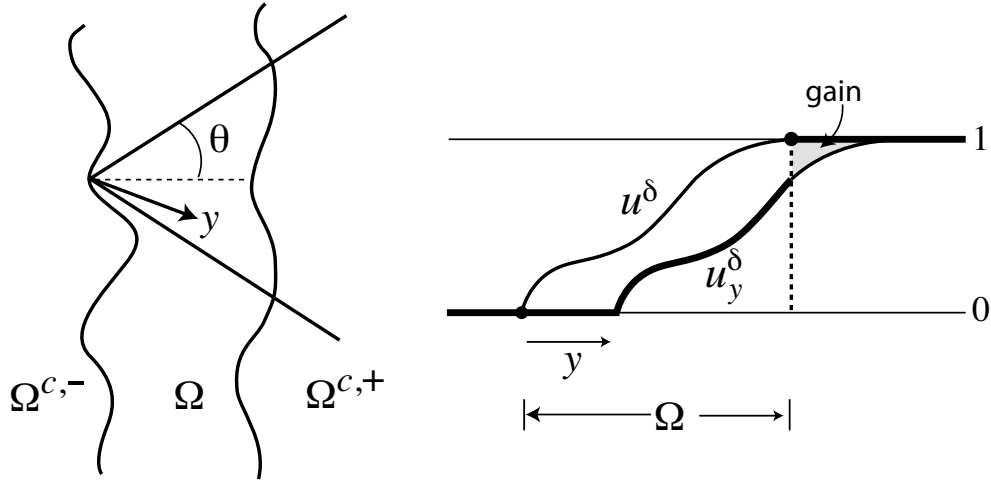}}
\caption{Translation of a subsolution.}
\label{fig3}
\end{figure}

Then, we claim that the function
\begin{align}
u^\delta_y(x)=\left\{
\begin{array}{lcl}
u^\delta(x-y) &\text{if}  &x\in \overline\Omega,\\
1 &\text{if} &x\in \Omega^{c,+},\\
0 &\text{if} &x\in \Omega^{c,-},
\end{array}\right. \label{uhdefn}
\end{align}
is also a subsolution.
To check this, first note $u^\delta(x-y)\leq u^\delta(x)$ for all $x\in \R^N\setminus \Omega$ (since $y \in C^+$ and by the Lipschitz assumption for the boundary),
so $u^\delta_y \geq u^\delta(\cdot-y)$ and they coincide inside $\Omega$. Hence, since
$u^\delta$ is a subsolution and the value of $\I$ increase when increasing the value of the function outside $\Omega$,
we get $\I u^\delta_y (x)\geq \I u^\delta(x)\geq 0$ for all $x\in \Omega$ (see Figure \ref{fig3}).
Hence, $u^\delta_{h}$ is a subsolution as well, so it must be less then $U$ everywhere in $\Omega$. This gives 
\begin{align}
U(x_0)\leq u^\delta(x_0)+\delta=u_h^\delta(x_0+hy)+\delta\leq  U(x_0+hy)+\delta,\notag
\end{align}
and the result follows by letting $\delta \to 0$.
\end{proof}
The monotonicity of $U$ has an immediate implication on the possible values for the gradient of test functions which touch $U$:
if $\phi$ is a test function which touches $U$ from above at a point $x_0\in \Omega$, then either $\nabla \phi(x_0)=0$ or $\cos(\frac{\pi}{2}-\theta)\leq v\cdot e_1\leq 1$,
where $v\in S^{N-1}$ is the direction of $\nabla\phi(x_0)$. To see this, let $y\in S^{N-1}$ be in the cone of monotonicity. Then
\begin{align}
\nabla\phi(x_0)\cdot y=&\lim_{h\rightarrow 0} \frac{\phi(x_0-hy)-\phi(x_0)}{h}\notag\\
&\geq \limsup_{h\rightarrow 0} \frac{U(x_0-hy)-U(x_0)}{h}\geq 0.\notag
\end{align}
This implies the angle between $\nabla\phi(x_0)$ and $y$ is less then $\frac{\pi}{2}$ and in turn, since $y$ was arbitrary in the cone of monotonicity,
the angle between $\nabla\phi(x_0)$ and $e_1$ is less then $\frac{\pi}{2}-\theta$.  A similar argument holds for test functions touching $U$ from below.

The angle $\frac{\pi}{2}-\theta$ is important in our analysis, so from here on we denote
\begin{equation}
\label{eq:Ctheta}
C_\theta=\cos\left(\frac{\pi}{2}-\theta\right)>0.
\end{equation}
In the sequel, we will also use the cones opening in the opposite direction:
\begin{align}
C^-=-C^+=\left\{x\in\mathbb{R}^N: -1\leq \frac{x\cdot e_1}{|x|} < -\cos(\theta)\right\}\notag
\end{align}

\subsection{Barriers and Growth Estimates}
Now, we want to construct suitable barriers to show that $U$ detaches from $\partial \Omega^-$ at least as $d^s(x,\partial\Omega^-)$,
and from  $\partial \Omega^+$ at least as $d^s(x,\partial\Omega^+)$.

This $d^s$ growth is naturally suggested by the fact that the function
$(x_1^+)^{s}=(x_1\vee 0)^s:\mathbb{R}^N\rightarrow \mathbb{R}^+$ solves  $\I(x_1^+)^{s}=0$ for $x_1>0$.
Indeed, this follows immediately from the fact that $\R\ni \eta \mapsto (\eta^+)^s$ solves the $s$-fractional Laplacian on the positive half-line in one dimension:
$\Delta^s (\eta^+)^{s}=0$ on $(0,\infty)$
(see \cite[Propositions 5.4 and 5.5]{MR2367025}).

Our goal is to show the existence of a small constant $\epsilon>0$ such that the function
\begin{align}
g_\epsilon(x)&=\left\{
\begin{array}{lcl}
\epsilon d^{s}(x,\partial\Omega^-) &\text{if}  &x\in \overline\Omega,\\
1 &\text{if} &x\in \Omega^{c,+},\\
0 &\text{if} &x\in \Omega^{c,-}.
\end{array}\right. \label{growth2barriersub}
\end{align}
is a subsolution near $\partial \Omega^-$. The reason why this should be true is that, by the discussion above,
$$
\I\bigl( d^{s}(\cdot,\partial\Omega^-)\bigr)(x_0)=0
$$ near
$\partial \Omega^-$ for $x_0 \in \Omega$ (here we use that $\partial \Omega^-$ is $C^{1,1}$).
Now, when evaluating the integral $\I g_\epsilon(x_0)$  for some $x_0 \in \Omega$,
this integral will differ from $\I(\epsilon d^{s}(\cdot,\partial\Omega^-))(x_0)$ in two terms: if $x \in \Omega^{c,+}$ and $\epsilon d^{s}(x,\partial\Omega^-) \leq 1$
then we ``gain'' inside the integral since $g_\epsilon(x)=1 \geq \epsilon d^{s}(x,\partial\Omega^-)$. On the other hand,
if $x \in \Omega^{c,+}$ and $\epsilon d^{s}(x,\partial\Omega^-) \geq 1$
then we have a ``loss''. So, the goal becomes to show that the gain compensate the loss for $\epsilon$ sufficiently small.

This argument is however not enough to conclude the proof on the growth of $U$, since $d^{s}(x,\partial\Omega^-)$ is a solution only in a neighborhood of $\partial \Omega^-$
(which depends on the $C^{1,1}$ regularity of $\partial \Omega^-$).
In order to handle this problem, we first show $U$ grows like $d^{2s}(x,\partial\Omega^-)$ inside $\Omega$,
so for $\epsilon$ small we will only need to consider (\ref{growth2barriersub}) near the boundary.  This is the content of the next lemma.

\begin{lemma}\label{growth1}
There is a constant $C>0$ such that $Cd^{2s}(x,\partial\Omega^-) \leq U(x)\leq 1-Cd^{2s}(x,\partial\Omega^+)$.
\end{lemma}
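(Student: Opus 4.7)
We would prove only the lower bound $U(x)\ge C d^{2s}(x,\partial\Omega^-)$; the upper bound follows by the parallel Perron argument applied to the supersolution family $1-\mathcal F$ (for which $1-U$ is the Perron infimum), with the roles of $\partial\Omega^-$ and $\partial\Omega^+$ exchanged. So in what follows all effort is spent on the lower bound.

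The plan is to produce an explicit subsolution $v\in\mathcal F$ satisfying $v(x)\ge C d^{2s}(x,\partial\Omega^-)$ in $\Omega$; then the definition of $U$ as the Perron supremum will give $U\ge v$ and the bound will follow. Write $\rho(x):=d(x,\partial\Omega^-)$ and, for a small constant $c>0$ to be fixed, we take as candidate
\begin{equation*}
 v(x)=\begin{cases} 0 & x\in\Omega^{c,-},\\ c\,\rho(x)^{2s}\wedge 1 & \text{otherwise}.\end{cases}
\end{equation*}
The uniform bound $\rho\le M$ on $\Omega$ (from \eqref{eq:m M}) shows that choosing $c\le (2M^{2s})^{-1}$ forces $v\le 1/2$ on $\Omega$ and $v\le 1$ on $\Omega\cup\Omega^{c,+}$; in particular the cap is inactive inside $\Omega$. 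Hence $v$ automatically satisfies the two pointwise inequalities in \eqref{monotonefamily}, and the only task is to verify $\I v(x)\ge 0$ for every $x\in\Omega$ in the viscosity sense of Definition \ref{subsupdefn}.

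The heart of the argument is a one-dimensional computation for the profile $\psi(\eta):=c\,\bigl((\eta^+)^{2s}\wedge 1\bigr)$. We would evaluate the $1$D integral at $\eta_0\in(0,c^{-1/(2s)})$ by splitting into three ranges: on $|\eta|<\eta_0$ it gives, after the scaling $\eta=\eta_0 t$, the positive constant $c\int_0^1[(1+t)^{2s}+(1-t)^{2s}-2]\,t^{-1-2s}\,dt>0$ (positive because $2s>1$, independent of $\eta_0$ by scaling); on $\eta_0<\eta<c^{-1/(2s)}$ the ``rising'' piece contributes a positive term comparable to $c\log\bigl(1/(c\eta_0^{2s})\bigr)$ that blows up as $\eta_0\to 0$; on $\eta\ge c^{-1/(2s)}$ the capped piece contributes a nonnegative amount since $1\ge 2c\eta_0^{2s}=2\psi(\eta_0)$. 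The sum is strictly positive. In the $N$-dimensional setting, at $x_0\in\Omega$ lying in the $r_0$-tubular neighbourhood of $\partial\Omega^-$ guaranteed by the $C^{1,1}$ bound \eqref{lipass}, the function $\rho$ is smooth with $|\nabla\rho|=1$, so $\nabla v(x_0)\ne 0$ and $\I v(x_0)$ is the $1$D integral of the trace $\eta\mapsto v(x_0+\eta\nu)$ along $\nu:=\nabla\rho(x_0)$. Along $-\nu$ this trace crosses into $\Omega^{c,-}$ (where $v\equiv 0$) at $\eta=\rho(x_0)$, reproducing the $\eta\le 0$ behaviour of $\psi$; along $+\nu$ it agrees with $c(\rho(x_0)+\eta)^{2s}$ up to an $O(\eta^2)$ curvature correction inside the tubular neighbourhood, and then transitions through $\Omega^{c,+}$ where $v\le 1$. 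Both perturbations integrate against $\eta^{-1-2s}$ to a bounded error absorbed by the positive leading $1$D contributions for $c$ small enough. For $x_0\in\Omega$ at distance $\ge r_0$ from $\partial\Omega^-$, $d^{2s}\le r_0^{2s}$ is a fixed constant and the required bound is obtained by a compactly supported ball-bump subsolution supported in a ball $B_r(z_0)\Subset\Omega^{c,+}$-accessible through the cone $C^+$, patched with $v$ via Lemma \ref{maxsubissub}.

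The main obstacle is the uniform bookkeeping of the far-field contribution along $x_0+\eta\nu$: this ray can pass through both $\Omega$ and $\Omega^{c,+}$, in each of which $v$ is given by a different formula, and all transitions must be controlled uniformly in $x_0$ using only the geometric constants $m$, $M$, $L$, $C_1$. The one-dimensional model also explains why the natural exponent at this stage is $2s$ rather than the sharp $s$: the one-sided loss $-c/s$ coming from $v\equiv 0$ on $\Omega^{c,-}$ is absorbed by the positive $\log(1/\rho(x_0))$ term only because $v=c\rho^{2s}$ carries one extra power of $\rho$ beyond the critical $c\rho^s$-profile. Getting the sharper $\rho^s$ growth requires the refined near-boundary barrier $g_\epsilon$ of \eqref{growth2barriersub}, which is only a subsolution locally near $\partial\Omega^-$ and is therefore handled in the next step of the section, after the present $d^{2s}$-lower bound has taken care of the interior.
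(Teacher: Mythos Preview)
Your approach differs from the paper's and has two gaps.

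For the lower bound, membership of $v=c\rho^{2s}\wedge1$ in $\mathcal F$ requires $\I v\ge0$ at \emph{every} $x\in\Omega$, but you verify this only inside the $r_0$-tubular neighbourhood of $\partial\Omega^-$. Outside that neighbourhood $\rho$ need not be $C^{1,1}$ (once the strip width $M$ exceeds the curvature radius of $\partial\Omega^-$ the cut locus can enter $\Omega$), yet at such points $v$ can still be touched from above by $C^{1,1}$ test functions and the viscosity condition must be checked there. Your patch via Lemma~\ref{maxsubissub} does not repair this: that lemma requires \emph{each} input to already be a subsolution on all of $\Omega$, so it cannot be used to fix $v$ at points where $v$ itself has not been verified. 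Even within the tubular neighbourhood your one-dimensional transfer is loose: the identity $\rho(x_0+\eta\nu)=\rho(x_0)+\eta$ that feeds the positive ``rising'' term holds only while the ray stays in the neighbourhood; beyond it $\rho$ may grow strictly slower and the claimed $c\log(1/(c\eta_0^{2s}))$ contribution is not justified. For the upper bound, the ``parallel Perron'' symmetry does not yield what you need: a lower bound on $1-U$ amounts to comparing every $u\in\mathcal F$ with a fixed supersolution across the \emph{unbounded} strip $\Omega$, but only the compact comparison principle (Theorem~\ref{comparison}) is available at this stage; the non-compact version (Theorem~\ref{monocompthrm}) is proved later and relies on the very monotonicity built from the present lemma.

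The paper avoids both difficulties with a different barrier family (Lemma~\ref{growth1construction}): for each target point $x_0$ it takes an envelope, over the monotonicity cone $C^+$, of truncated paraboloids. These barriers are globally $C^{1,1}$ by construction (no distance-function regularity enters), and their sub/supersolution property is a direct balance between the negative paraboloid contribution (Lemma~\ref{lemma:I parab}) and the positive mass coming from $\Omega^{c,+}$; choosing opening $A\sim t_0^{2s-2}$ gives height $\sim t_0^{2s}$ at $x_0$. Crucially, for the upper bound the supersolution $P^-$ with $\mathcal S=\{x_0\}$ equals $1$ outside a compact subset of $\Omega$, so every $u\in\mathcal F$ can be compared with it using only Theorem~\ref{comparison}.
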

We will prove the lower bound on $U$ by constructing a subsolution obtained as an envelope of paraboloids. The
construction is contained in the following lemma.  We use the notation
$\partial\Omega^+_{t_0}=\{x-t_0e_1:x\in\partial\Omega^+\}$ and $\partial\Omega^-_{t_0}=\{x+t_0e_1:x\in\partial\Omega^-\}$.

\begin{lemma}\label{growth1construction}
There is a small constant $c_0\in (0,1)$, depending only on $s$ and the geometry of the problem
such that, for any set $\mathcal{S}\subset\mathbb{R}^N$ and constants $A>0$, $t_0 \in (0,M]$ ($M$ as in \eqref{eq:m M}) satisfying $A C_\theta^{2-2s}t_0^{2-2s}\leq c_0$
($C_\theta$ as in \eqref{eq:Ctheta}), the function 
\begin{align}
P^+(x)&=\left\{
\begin{array}{lcl}
P_{\mathcal{S}}^+(x) &\text{if}  &x\in \overline\Omega,\\
1 &\text{if} &x\in \Omega^{c,+},\\
0 &\text{if} &x\in \Omega^{c,-},
\end{array}\right. \label{growth1barriersub}\\
P^+_{\mathcal{S}}(x)&=\sup_{x_0\in \partial\Omega^-_{t_0}\cap\mathcal{S}}\left(\sup_{x' - x_0 + t_0 e_1 \in C^+}\left\{-A(|x-x'|^2-C_\theta^2t_0^2)\vee 0\right\}\right)\notag
\end{align}
is a subsolution. Likewise,
\begin{align}
P^-(x)&=\left\{
\begin{array}{lcl}
P^-_{\mathcal{S}}(x) &\text{if}  &x\in \overline\Omega,\\
1 &\text{if} &x\in \Omega^{c,+},\\
0 &\text{if} &x\in \Omega^{c,-},
\end{array}\right. \label{growth1barriersup}\\
P^-_{\mathcal{S}}(x)&=\inf_{x_0\in \partial\Omega^+_{t_0}\cap \mathcal{S}}\left(\inf_{x'-x_0 - t_0 e_1\in C^-}\left\{1+A(|x-x'|^2-C_\theta^2t_0^2)\wedge 1\right\}\right)\notag
\end{align}
is a supersolution.
\end{lemma}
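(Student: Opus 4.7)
The plan is to reduce to a single-paraboloid building block and then verify the subsolution property by a case analysis. Since the supremum of a family of subsolutions is (after taking the upper semicontinuous envelope) again a subsolution --- a standard strengthening of Lemma \ref{maxsubissub} --- it suffices to show that, for each admissible pair $(x_0,x')$ with $x_0 \in \partial\Omega^-_{t_0}\cap\mathcal{S}$ and $x'-x_0+t_0e_1 \in C^+$, the single-paraboloid function
\[
q(x) := \begin{cases} \bigl(-A(|x-x'|^2 - C_\theta^2 t_0^2)\bigr) \vee 0 & x \in \overline{\Omega},\\ 1 & x \in \Omega^{c,+},\\ 0 & x \in \Omega^{c,-}\end{cases}
\]
is a subsolution of $\I$ in $\Omega$. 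The assertion for $P^-$ then follows by the symmetric argument with convex paraboloids and supersolutions.

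For this single-paraboloid verification, I would split the analysis into three scenarios at a given point $z \in \Omega$. First, if $q(z)=0$ and $q \equiv 0$ in a neighborhood of $z$ inside $\Omega$, any $C^{1,1}$ test function $\phi$ touching $q$ from above at $z$ satisfies $\phi \geq 0$ near $z$ with $\phi(z)=0$, forcing $\nabla \phi(z)=0$; the associated $\tilde u$ from (\ref{subsupcut}) is then nonnegative on $\R^N$ with $\tilde u(z)=0$, so every 1-D integral $\int_0^\infty \eta^{-1-2s}\tilde u(z+\eta y)\,d\eta$ is nonnegative and $\sup_y+\inf_w \geq 0$ holds automatically. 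Second, at a smooth interior point $z \in B_{C_\theta t_0}(x')\cap\Omega$ with $z \neq x'$, $q$ is $C^\infty$ locally with nonzero gradient pointing along $v = (x'-z)/|x'-z|$, and one evaluates $\I q(z)$ directly via the 1-D integral in direction $v$. Third, the vertex $z=x'$ (if $x' \in \Omega$), where $\nabla q(x')=0$, is handled via the sup--inf formula of Definition \ref{IFLdefn}. The kink points $z \in \partial B_{C_\theta t_0}(x')\cap\Omega$ are dealt with by smooth approximation of the truncation combined with the stability Theorem \ref{stability}, or equivalently by observing that the admissible superdifferentials at such $z$ interpolate between the smooth paraboloid gradient and $0$, reducing to the preceding cases.

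The main obstacle is the quantitative balance in the second and third scenarios. The concavity of the paraboloid contributes a negative term of order $A(C_\theta t_0)^{2-2s}$ to the 1-D integral --- coming both from $\int_0^{C_\theta t_0}\eta^{1-2s}\,d\eta$ inside the support and from the tail where $q$ has dropped back to $0$ --- while the positive ``gain'' from the values $q \equiv 1$ on $\Omega^{c,+}$ along $+e_1$ is of order $M^{-2s}$, the strip width $M$ bounding the distance from $x'$ to $\Omega^{c,+}$. The cone condition $x'-x_0+t_0e_1 \in C^+$ ensures $v$ (or $e_1$, in the vertex case) has a uniform positive $e_1$-component and that $x'$ is at controlled distance from $\partial\Omega^+$, so this gain is uniformly quantifiable; the Lipschitz geometry of $\partial\Omega^-$ then bounds the additional loss from the zero values on $\Omega^{c,-}$. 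Choosing $c_0=c_0(s,L,M)$ sufficiently small so that the smallness hypothesis $AC_\theta^{2-2s}t_0^{2-2s} \leq c_0$ forces the concavity loss to be dominated by the long-range gain yields $\I q(z) \geq 0$ at every $z \in \Omega$, completing the verification for $P^+$; the argument for $P^-$ is identical with signs reversed.
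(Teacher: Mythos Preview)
Your reduction to single paraboloids is the step that fails. For the individual building block $q$ centered at $x'$, at a point $z\in B_{C_\theta t_0}(x')\cap\Omega$ with $z\neq x'$ the gradient direction is $v=(x'-z)/|x'-z|$, and this can be \emph{any} unit vector depending on where $z$ sits relative to $x'$. In particular $v\cdot e_1$ may be zero or negative: the cone condition $x'-x_0+t_0e_1\in C^+$ constrains the location of the center $x'$ relative to $x_0$, but says nothing about the direction from an arbitrary $z$ to $x'$. When $|v\cdot e_1|$ is small the line $\{z+\eta v\}_{\eta\in\R}$ may never enter $\Omega^{c,+}$ (or only at distance $\gg M/C_\theta$), so the positive ``gain'' you invoke is not uniformly available, while the concavity loss of order $A(C_\theta t_0)^{2-2s}$ persists. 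Hence a single cut paraboloid (extended by $1$ on $\Omega^{c,+}$ and $0$ on $\Omega^{c,-}$) is \emph{not} a subsolution, and the sup-of-subsolutions argument collapses.

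This is exactly why the paper works directly with the envelope $P^+$ rather than with individual paraboloids: by taking the supremum over the full cone of centers $x'$, one forces the gradient of the envelope to lie in $C^+$ at every point where $P^+>0$ (this is noted explicitly in the paragraph preceding the proof). With $\nabla P^+\in C^+$ in hand, the integration direction $v$ satisfies $v\cdot e_1\geq C_\theta$, which guarantees the line hits $\Omega^{c,+}$ within distance $M/C_\theta$ and yields the uniform gain $\frac{1-Ar_0^2}{2s}(C_\theta/M)^{2s}$. The negative contribution is then bounded by Lemma~\ref{lemma:I parab}, and the smallness of $c_0$ closes the balance. So the envelope is essential to the argument, not merely a device for packaging many subsolutions.
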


\begin{figure}
\centerline{\epsfysize=2.5truein\epsfbox{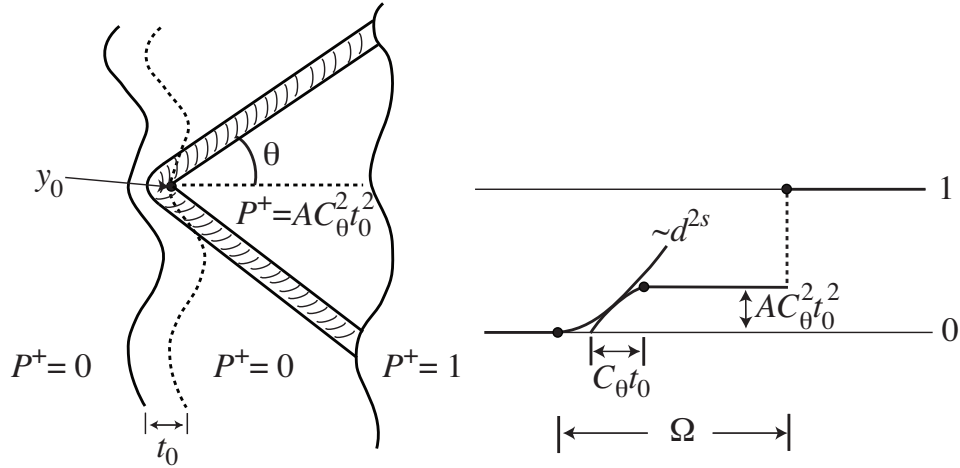}}
\caption{The barrier $P^+$ for $\mathcal{S}=\{y_0\}$.}
\label{fig4}
\end{figure}

The idea in the construction of $P^+$ (see Figure \ref{fig4}) is that we will calculate
how high we can raise a paraboloid $-A(|x-x'|^2-C_\theta^2t_0^2)$
near the boundary $\partial\Omega^-$, and have a subsolution using the boundary value $1$ inside $\Omega^{c,+}$ to compensate the concave shape of the paraboloid when computing the operator.
In order to take advantage of the value inside $\Omega^{c,+}$, we have to ensure that the derivative of our
test function is always inside $C^+$ or $C^-$. This is the reason to construct our barrier as a supremum of these paraboloids in the cone of monotonicity
(indeed, the gradient of $P_{\mathcal{S}}^+$ always points inside $C^+$).
Finally, having a general set $\mathcal{S}$ allows us to construct more general barriers of this form, so we can use them locally or globally, as needed.

Before proving the lemma, we estimate the value of $\I$ for a ``cut'' paraboloid $-A(|x-x_0|^2-r_0^2)\vee 0$.
The following result follows by a simple scaling argument. We leave the details to the interested reader.
\begin{lemma}\label{lemma:I parab}
For any $s \in (1/2,1)$ there exists a constant $C_s$, depending only on $s$, such that
$$
\I [-A(|x-x_0|^2-r_0^2)\vee 0]\geq -C_sAr_0^{2-2s}.
$$
\end{lemma}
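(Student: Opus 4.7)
\emph{Plan.} I would reduce the estimate to a normalized truncated paraboloid by scaling, then bound $\I$ of that normalized paraboloid explicitly. Setting $\psi(y):=(1-|y|^2)^+$, the function in the statement can be written as $\phi(x) = Ar_0^2\,\psi\bigl((x-x_0)/r_0\bigr)$. The change of variable $\eta \mapsto r_0\eta$ in the integrals (resp.\ in the sup/inf) of Definition~\ref{IFLdefn} gives the scaling identity
\begin{equation*}
\I\phi(x) \;=\; A\, r_0^{2-2s}\, \I\psi\bigl((x-x_0)/r_0\bigr),
\end{equation*}
so it suffices to exhibit a constant $C_s$ depending only on $s$ with $\I\psi(y)\geq -C_s$ for every $y$ where $\psi$ is $C^{1,1}$, i.e.\ for $|y|\neq 1$.

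If $|y|>1$, then $\psi(y)=0$, $\nabla\psi(y)=0$, and $\psi\geq 0$ everywhere; in the zero-gradient formula of Definition~\ref{IFLdefn} both one-sided integrands are then non-negative, so $\I\psi(y)\geq 0$. For $|y|<1$, radial symmetry of $\psi$ lets me take $y=re_1$ with $r\in[0,1)$: for $r>0$ the gradient points along $-e_1$, while for $r=0$ the supremum and infimum in the zero-gradient formula coincide by symmetry. In either case one obtains
\begin{equation*}
\I\psi(re_1) \;=\; \int_0^\infty \frac{g(r+\eta)+g(r-\eta)-2g(r)}{\eta^{1+2s}}\,d\eta, \qquad g(\tau):=(1-\tau^2)^+,
\end{equation*}
which is a continuous function of $r$. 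I would split the integration at $\eta=1-r$ (past which $g(r+\eta)$ vanishes) and at $\eta=1+r$ (past which $g(r-\eta)$ also vanishes), compute each of the three resulting elementary integrals in closed form (powers of $\eta$), and verify that the total is bounded below on $[0,1)$.

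\emph{Main obstacle.} The delicate part is the behavior as $r\to 1^-$: a naive $L^\infty$ bound on the numerator yields a divergent estimate of order $(1-r)^{-2s}$, so one must exploit the explicit form of $g$. Collecting the closed-form primitives, the only terms in $\I\psi(re_1)$ potentially divergent in that limit are of order $(1-r)^{1-2s}$, and their total coefficient works out to $\tfrac{r(2s+1)-(2s-1)}{2s(2s-1)}$, which is strictly positive whenever $r> \tfrac{2s-1}{2s+1}$; this is automatic for $r$ close to $1$. Consequently $\I\psi(re_1)\to+\infty$ as $r\to 1^-$, and together with continuity on any $[0,1-\varepsilon]$ this gives a uniform lower bound $-C_s$. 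Unscaling via the first step then produces $\I\phi(x)\geq -C_s A r_0^{2-2s}$, as required.
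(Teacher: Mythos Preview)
Your proposal is correct and is precisely the ``simple scaling argument'' the paper invokes; the paper gives no further details and leaves them to the reader. Your reduction to $\psi(y)=(1-|y|^2)^+$, the scaling identity $\I\phi=A r_0^{2-2s}\I\psi$, and especially the verification that the potentially divergent $(1-r)^{1-2s}$ contributions combine with coefficient $\tfrac{r(2s+1)-(2s-1)}{2s(2s-1)}>0$ near $r=1$ are exactly the details the paper omits.
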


\begin{proof}[Proof of Lemma \ref{growth1construction}]
We will prove $P^+$ is a subsolution.  Showing $P^-$ is a supersolution follows a similar argument.  If $x\in \Omega$ is such that $P^+(x)= 0$, then $\I P^+(x)\geq 0$
since $P^+\geq 0$. So, it remains to check $\I P^+(x)\geq 0$ if $P^+(x)\neq 0$. 
We estimate by computing lower bounds for the positive and negative contributions to the operator.  

Let $r_0=C_\theta t_0$.  For the positive part we estimate from below the contribution from $\Omega^{c,+}$. Since $\nabla P^+ \in C^+$,
the worst case is when the direction $v$ in the integral satisfies $C_\theta= v\cdot e_1$. So, since $d(x,\partial\Omega^+)\leq M$, the contribution from $\Omega^{c,+}$ is greater
or equal than
\begin{align}
\int_{M/C_\theta}^\infty\frac{1-Ar_0^2}{\eta^{1+2s}}\,d\eta = \frac{1-Ar_0^2}{2s}\left(\frac{C_\theta}{M}\right)^{2s}.\notag 
\end{align}
To estimate the negative contribution, we use Lemma \ref{lemma:I parab} to get that
\begin{align}
\inf_{x'\in B_{r_0}(x_0)} \left\{\I[-A(|x-x_0|^2-r_0^2)\vee 0](x')\right\} \geq -C_s Ar_0^{2-2s}.
\end{align}
bounds the negative contribution from below. Hence, all together we have
\begin{align}
\I P(x)\geq \frac{1-Ar_0^2}{2s}\left(\frac{C_\theta}{M}\right)^{2s}-C_s Ar_0^{2-2s}.\notag
\end{align}
Since $r_0 \leq t_0 \leq M$, it is easily seen that there exists a small constant $c_0$, depending only on $s$ and the geometry of the problem,
such that $\I P^+(x)\geq 0$ if $Ar_0^{2-2s} =A C_\theta^{2-2s}t_0^{2-2s}\leq c_0$.
\end{proof}

\begin{proof}[Proof of Lemma \ref{growth1}]
For any point $x_0\in \Omega$ there exists $t_0 \in (0,M]$ such that $x_0\in\partial\Omega^-_{t_0}$.
It is clear that $t_0\geq d(x_0,\partial\Omega^-)$, so it suffices to prove $U(x_0)\geq C t_0^{2s}$ for some constant $C>0$ independent of $x_0$.
Lemma \ref{growth1construction} gives the existence of a constant $c_0$, independent of $x_0$, so that if $AC_\theta^{2-2s}t_0^{2-2s}\leq c_0$
then $P^+(x)$ given by (\ref{growth1barriersub}) with $\mathcal{S}=\{x_0\}$ is a subsolution, and therefore $U(x)\geq P^+(x)$.
In particular $U(x_0)\geq P^+(x_0)=AC_\theta^2t^2_0$, so choosing $A=c_0C_\theta^{2s-2}t_0^{2s-2}$ we get $U(x_0)\geq c_0C_\theta^{2s}t_0^{2s}$, as desired.

Similarly, $P^-(x)$ given by (\ref{growth1barriersup}) with $\mathcal{S}=\{x_0\}$ is a supersolution which is equal to $1$ for all values of $x\in \Omega$ outside of a compact subset.
Using the comparison principle on compact sets (Theorem \ref{comparison}) we conclude $u\leq P^-$ for any $u\in \mathcal{F}$,
so the same must be true of the pointwise supremum of the family $\mathcal{F}$, and we conclude as above.
\end{proof}

\begin{lemma}\label{growth2}
There is a constant $\epsilon>0$ such that $\epsilon d^{s}(x,\partial\Omega^-)\leq U(x)\leq 1-\epsilon d^{s}(x,\partial\Omega^+)$ inside $\Omega$.
\end{lemma}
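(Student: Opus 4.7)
The strategy is to exhibit, for $\epsilon$ and $\delta_0$ sufficiently small, the truncated barrier
\begin{align*}
g_\epsilon(x) = \begin{cases} \epsilon\,\min\bigl(d(x,\partial\Omega^-),\,\delta_0\bigr)^s & x \in \overline\Omega, \\ 0 & x \in \Omega^{c,-}, \\ 1 & x \in \Omega^{c,+}, \end{cases}
\end{align*}
as a member of the family $\mathcal{F}$ of \eqref{monotonefamily}. Once $g_\epsilon \in \mathcal{F}$, the definition of $U$ immediately yields $g_\epsilon \leq U$, so $\epsilon\, d^s(\cdot,\partial\Omega^-) \leq U$ on the strip $\{d(\cdot,\partial\Omega^-) \leq \delta_0\}$; on the complementary interior, Lemma \ref{growth1} gives $U \geq C\, d^{2s}(\cdot,\partial\Omega^-) \geq C\delta_0^s\, d^s(\cdot,\partial\Omega^-)$, so after shrinking $\epsilon$ we obtain the lower bound on all of $\Omega$. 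The upper bound is obtained by the symmetric construction of a supersolution $h_\epsilon = 1 - \tilde g_\epsilon$ built from $\partial\Omega^+$, with the roles of the two boundary components reversed.

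The heart of the argument is checking $\I g_\epsilon(x_0) \geq 0$ for $x_0 \in \Omega$ with $d_0 := d(x_0,\partial\Omega^-) < \delta_0$. Let $z_0$ be the (unique) nearest boundary point and $\nu$ the inward unit normal, which lies in $C^+$ by \eqref{lipass}. The $C^{1,1}$ regularity of $\partial\Omega^-$ yields a fixed $\rho_1 > 0$, depending only on the curvature bound, on which the normal segment from $z_0$ realizes the distance exactly: $d(x_0+\eta\nu,\partial\Omega^-) = d_0+\eta$ for $|\eta| \leq \rho_1$. Since $\nabla g_\epsilon(x_0) = \epsilon s d_0^{s-1}\nu \neq 0$, the operator $\I g_\epsilon(x_0)$ is the 1-D integral along $\nu$, and on $|\eta| \leq \rho_1$ its integrand coincides with that of the 1-D $s$-fractional Laplacian applied to $\eta \mapsto \epsilon\bigl((d_0+\eta)^+\bigr)^s$ at $d_0$, which vanishes by the fundamental-solution fact recalled in Lemma \ref{lemma:fund solution}. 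The residual contribution from $|\eta| > \rho_1$ splits into two pieces: on the portion where $x_0+\eta\nu$ remains in $\Omega$ one has $g_\epsilon \leq \epsilon\delta_0^s$, producing a loss bounded by $C\epsilon\delta_0^s\rho_1^{-2s}$; on the portion $\eta \geq t_+$ where $x_0+\eta\nu \in \Omega^{c,+}$ (with $t_+ \leq M/C_\theta$ controlled via \eqref{eq:m M} and \eqref{eq:Ctheta}) the value $g_\epsilon = 1$ yields a strictly positive gain of order $1/t_+^{2s} - O(\epsilon)$. For $\epsilon$ small the $O(1)$ gain from $\Omega^{c,+}$ dominates the $O(\epsilon)$ curvature-induced loss, giving $\I g_\epsilon(x_0) \geq 0$.

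In the interior $\{d(\cdot,\partial\Omega^-) > \delta_0\}$ the function $g_\epsilon \equiv \epsilon\delta_0^s$ is constant, so $\nabla g_\epsilon = 0$ and we appeal to the sup-inf clause of Definition \ref{IFLdefn}. Choosing $y = e_1$ sends the sup-integrand into $\Omega^{c,+}$ for an $O(1)$ gain $(1-\epsilon\delta_0^s)/(2s M^{2s})$, while the worst inf-direction drives the other ray into $\Omega^{c,-}$ for a loss of order $-\epsilon\delta_0^s$; these balance for $\epsilon$ small. At points on the crease $\{d(\cdot,\partial\Omega^-) = \delta_0\}$ where $g_\epsilon$ has a Lipschitz corner, any $C^{1,1}$ test function $\phi$ touching from above must have $\nabla\phi(x_0)$ in the convex cone spanned by $\{0,\nu\}$, and the two preceding computations close the case by convex combination. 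The main technical obstacle is the middle paragraph: the exact fundamental-solution identity only applies on the bounded normal segment $|\eta| \leq \rho_1$, and the quantitative $O(\epsilon)$-versus-$O(1)$ balance depends on $\rho_1$ being bounded below by the geometry — precisely where the $C^{1,1}$ regularity of $\partial\Omega^-$ is essential.
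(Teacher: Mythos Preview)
Your truncated barrier $g_\epsilon = \epsilon\min\bigl(d(\cdot,\partial\Omega^-),\delta_0\bigr)^s$ fails to be a subsolution near the crease $\{d=\delta_0\}$, and this is a genuine obstruction, not just loose exposition. Along the inward normal the one-dimensional profile of $g_\epsilon$ is $\eta\mapsto h(\eta):=(\eta_+)^s\wedge\delta_0^s$, which has a concave corner at $\eta=\delta_0$. Writing $h=(\eta_+)^s-k$ with $k(\eta)=\bigl((\eta_+)^s-\delta_0^s\bigr)_+$, the identity $\Delta^s(\eta_+)^s\equiv 0$ on $(0,\infty)$ gives $\Delta^s h(d_0)=-\Delta^s k(d_0)$ for $d_0\in(0,\delta_0)$. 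Since $k\geq 0$ and $k(d_0)=0$,
\[
\Delta^s k(d_0)=\int_{\delta_0-d_0}^\infty\frac{(d_0+\tau)^s-\delta_0^s}{\tau^{1+2s}}\,d\tau
\;\geq\; c\,\delta_0^{s-1}\int_{\delta_0-d_0}^{2(\delta_0-d_0)}\frac{\tau-(\delta_0-d_0)}{\tau^{1+2s}}\,d\tau
\;\sim\;(\delta_0-d_0)^{1-2s}\longrightarrow +\infty
\]
as $d_0\to\delta_0^-$ (recall $2s>1$). The corrections coming from the jump to $1$ on $\Omega^{c,+}$ are bounded, so $\I g_\epsilon(x_0)\leq \epsilon\,\Delta^s h(d_0)+O(1)\to -\infty$; no choice of $\epsilon$ makes $g_\epsilon$ a subsolution on the whole layer $\{0<d<\delta_0\}$. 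The error in your accounting is the clause ``on $|\eta|\leq\rho_1$ the integrand coincides with the 1-D profile, which vanishes'': the full-line integral of the 1-D profile vanishes, but its restriction to $|\eta|\leq\rho_1$ does not, and the discarded tail is exactly the divergent piece above. (Your ``loss bounded by $C\epsilon\delta_0^s\rho_1^{-2s}$'' only controls $\int_{|\eta|>\rho_1}|g_\epsilon-g_\epsilon(x_0)|/\eta^{1+2s}$, not the difference with the 1-D profile.)

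The paper avoids this precisely by \emph{not} truncating. It keeps $g_\epsilon=\epsilon\,d^s(\cdot,\partial\Omega^-)$ on all of $\overline\Omega$ and forms the subsolution $w=g_\epsilon\vee P^+$, where $P^+$ is the paraboloid-envelope barrier of Lemma~\ref{growth1construction} with $\mathcal S=\mathbb R^N$. Choosing $\epsilon$ small forces $g_\epsilon\leq P^+$ outside a fixed tubular neighborhood $\mathcal N$ of $\partial\Omega^-$, so there $w=P^+$ is already a subsolution. Inside $\mathcal N$ the untruncated profile matches $(\eta_+)^s$ along the entire normal ray up to $\partial\Omega^+$, so the only discrepancy with the exact 1-D solution is on $\{\eta>t'\}$, where one trades $\epsilon(d_0+\eta)^s$ for the value $1$: an $O(1)$ gain against an $O(\epsilon)$ loss. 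The role your truncation was meant to play---making the barrier manageable away from $\partial\Omega^-$---is handled instead by the max with $P^+$, without introducing a concave kink.
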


\begin{proof}
We will prove the statement on the growth near $\partial\Omega^-$ by constructing a subsolution which behaves like $d^{s}(x,\partial\Omega^-)$ near the boundary. 

The uniform $C^{1,1}$ assumption for the boundary implies the existence of a neighborhood $\N$ of $\partial\Omega^-$ such that:
\begin{enumerate}
\item[(a)] for $t_0>0$ small enough, $x+2t_0e_1\in N$ for all $x\in\partial\Omega^-$;
\item[(b)] for any $x\in \N$ the line with direction $\nabla d^{s}(x,\partial\Omega^-)$  and passing through $x$ intersects $\partial\Omega^-$ orthogonally.
\end{enumerate}
Fix $t_0>0$ small so that (a) holds, and pick $A$ such that $AC_\theta^{2-2s}t_0^{2-2s}\leq c_0$, where $c_0$
is the constant given by Lemma \ref{growth1construction}.
Then the barrier $P^+(x)$ given by (\ref{growth1barriersub}) with $\mathcal{S}=\mathbb{R}^n$ is a subsolution.
Let $g_\epsilon$ be defined by (\ref{growth2barriersub}).  Choosing $\epsilon$ small enough we can guarantee $g_\epsilon(x) \leq P^+(x)$ for all $x\in \Omega\setminus \N$.
Hence, the growth estimate will be established once we show $w(x)=g_\epsilon(x)\vee P^+(x)$ is a subsolution (see Figure \ref{fig5a}).   

\begin{figure}
\centerline{\epsfysize=2truein\epsfbox{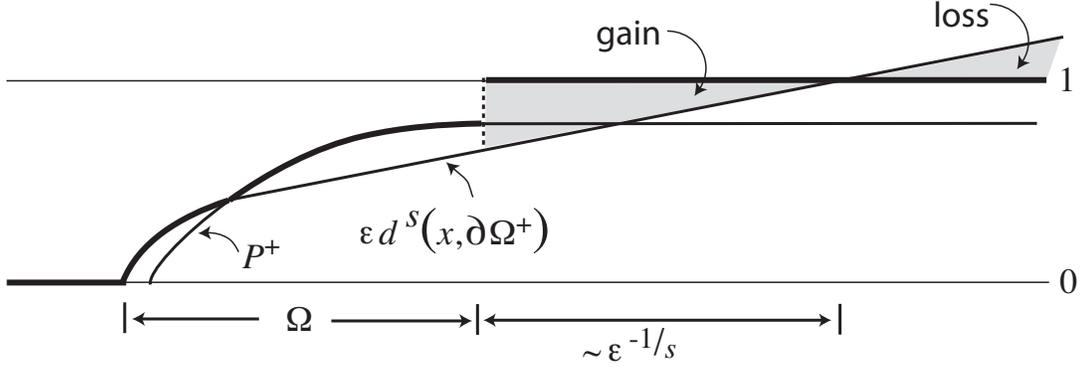}}
\caption{The barrier $w$ in the proof of Lemma \ref{growth1}.}
\label{fig5a}
\end{figure}

Consider any point $x\in \Omega$ and let $\phi\in C^{1,1}(x)$ be a test function touching $w$ from above at $x$.  If $P^+(x)\geq g_\epsilon(x)$ then
$\I\tilde{w}(x)\geq \I\tilde{P}^+(x)\geq 0$, where $\tilde{w}$ and $\tilde{P}^+$ are described by (\ref{subsupcut}). To conclude the proof,
it suffices to show $\I g_\epsilon(x)\geq 0$ when $g_\epsilon(x)>P^+(x)$ (in particular, $x \in \N$), since $\I\tilde{w}(x)\geq \I g_\epsilon(x)$ in this case.

The assumed regularity of the boundary $\partial\Omega^-$ allows us to compute $\I g_\epsilon(x)$ directly without appealing to test functions when $x\in \N$.  
Let $v\in S^{N-1}$ be the direction of $\nabla d(x,\partial\Omega^-)$. As $x\in \N$, by (b) above the line $\{x+\eta v\}_{\eta\in\R}$ intersects $\partial\Omega^-$ orthogonally.
Let $t>0$ [resp. $t'>0$] be the distance between $x$ and $\partial\Omega^-$ [resp. $\partial\Omega^+$] along this line (see Figure \ref{fig5b}.)
Replacing $\N$ with a possibly smaller neighborhood, we may assume $t<t'$.  Since $v$ lies in the cone of monotonicity, the maximum value for $t'$ is
$M/C_\theta$.  We further assume $\epsilon$ is small enough that $\epsilon^{-1/s}>2M/C_\theta$. Using the exact solution $(\eta^+)^{s}$ to the one dimensional
$s$-fractional Laplacian on the positive half-line \cite[Propositions 5.4 and 5.5]{MR2367025}
(that is, for all $\eta>0$,  $\int_{-\infty}^\infty \frac{((\eta+\tau)^+)^s - (\eta^+)^s}{|\tau|^{1+2s}}\,d\tau=0$ in the principal value sense) we have
\begin{align}
\I g_\epsilon(x)&\geq\int_{t'}^{\epsilon^{-\frac{1}{s}}-t}\frac{1-\epsilon(t+\eta)^s}{\eta^{1+2s}}\,d\eta -
\int_{\epsilon^{-\frac{1}{s}}-t}^\infty\frac{\epsilon(t+\eta)^s-1}{\eta^{1+2s}}\,d\eta.\notag
\end{align}
\begin{figure}
\centerline{\epsfysize=2.5truein\epsfbox{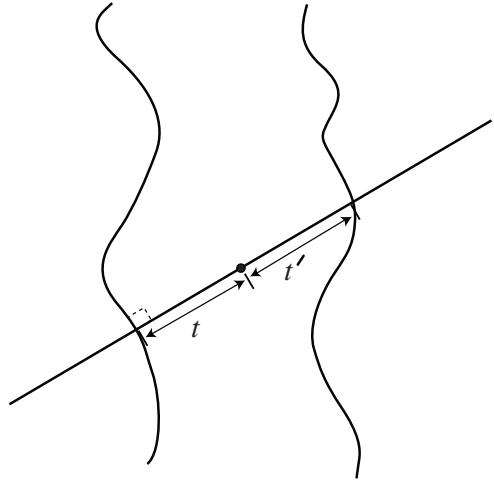}}
\caption{Measure of the distance to the boundary in the proof of Lemma \ref{growth1}.}
\label{fig5b}
\end{figure}
The first integral on the right hand side represents the ``gain'' where $\epsilon(t+\eta)^{s}\leq 1$ while the second integral represents the ``loss''
where $\epsilon(t+\eta)^{s}\geq 1$ (see Figure \ref{fig5a}).  We now show the right hand side is positive if $\epsilon$ is small enough.  For any $\eta>t'>t$
it holds $\frac{t+\eta}{\eta}\leq 2$. So, by choosing $\epsilon<C_\theta^2/(8M^s)$ and recalling that $t'<M/C_\theta<\epsilon^{-1/s}/2$ we get
\begin{align}
\int_{t'}^{\epsilon^{-\frac{1}{s}}-t}\frac{1-\epsilon(t+\eta)^s}{\eta^{1+2s}}\,d\eta&\geq \int_{t'}^{\epsilon^{-\frac{1}{s}}-t}\left(\frac{1}{\eta^{1+2s}}-\frac{\epsilon2^s}{\eta^{1+s}}\right)\,d\eta\notag\\
&\geq \frac{1}{s}\frac{1}{(t')^s}\left(\frac{1}{2}\frac{1}{(t')^s}-\epsilon 2^s\right)
 -\frac{1}{2s}\frac{1}{(\epsilon^{-\frac{1}{s}}-t)^{2s}}\notag\\
&\geq \frac{1}{4s}\frac{C_\theta^{2s}}{M^{2s}}-\frac{2^{2s-1}}{s}\epsilon^2.\notag
\end{align}
Also,
\begin{align}
\int_{\epsilon^{-\frac{1}{s}}-t}^\infty\frac{\epsilon(t+\eta)^s-1}{\eta^{1+2s}}\,d\eta&\leq \int_{\epsilon^{-\frac{1}{s}}-t}^\infty\left(\frac{2^s\epsilon}{\eta^{1+s}}-
\frac{1}{\eta^{1+2s}}\right)\,d\eta\notag\\
&\leq \frac{1}{s}\frac{\epsilon2^s}{(\epsilon^{-\frac{1}{s}}-t)^s}\leq \frac{2^{2s}}{s}\epsilon^2.\notag
\end{align}
Hence, combining all together,
\begin{align}
\I g_\epsilon(x)\geq \frac{1}{4s}\frac{C_\theta^{2s}}{M^{2s}} -\frac{2^{2s}+2^{2s-1}}{s}\epsilon^2.\notag
\end{align}
From this we conclude that for $\epsilon$ sufficiently small (the smallness depending only on  $s$ and the geometry of the problem) we have $\I g_\epsilon(x)\geq 0$.
Hence $w$ is a subsolution, which implies $U \geq w$ and establishes the growth estimate from below.

To deduce the decay of $U$ moving away from the boundary $\partial\Omega^+$, one uses similar techniques as above to construct a supersolution
with the desired decay and then applies the comparison principle on compact sets (Theorem \ref{comparison}),
arguing essentially as in the last paragraph of the proof of Lemma \ref{growth1}.
The key difference is that the comparison needs to be done on a compact set. So, instead of using the equivalent of (\ref{growth2barriersub}) which would require comparison on a non-compact set one should construct barriers like
\begin{align}
g^\epsilon(x)=\left\{
\begin{array}{lcl}
1-\epsilon d^{s}(x,\partial\mathcal{P}) &\text{if}  &x\in \overline\Omega\cap\mathcal{P},\\
1 &\text{if} &x\in \Omega^{c,+}\cup(\Omega\setminus \mathcal{P}),\\
0 &\text{if} &x\in \Omega^{c,-}.
\end{array}\right. 
\end{align}
where $\mathcal{P}$ is a ``parabolic set'' touching $\partial\Omega^+$ from the left.
More precisely, at any point $\hat{x}\in\mathbb{R}^{N-1}$ the regularity of the boundary implies the existence
of a paraboloid $p:\mathbb{R}^{N-1}\rightarrow \mathbb{R}$, with uniform opening, which touches $\Gamma_2$ from below at $(\Gamma_2(\hat x),\hat{x})$.
Define $\mathcal{P}=\{x\in \mathbb{R}^N : x_1\leq p(x_2,..., x_n)\}$, and choose $\mathcal{S}$ in (\ref{growth1barriersup}) to be a large
ball centered at $(\Gamma_2(\hat x),\hat{x})$,
which contains $\Omega \cap \mathcal P$. In this way one constructs a family of supersolutions $g^\epsilon \wedge P^-$ (depending on the point $(\Gamma_2(\hat x),\hat{x})$)
which are equal to $1$ outside a compact subset of $\Omega$,
so that one can apply the comparison on compact sets. Since $\hat x$ is arbitrary, this proves the desired decay.
\end{proof}

\begin{remark}\label{rmk:growth}
{\rm 
Arguing similar to the above proof one can show the $d^s$ growth and decay proved in the previous lemma is optimal: there exists a universal constant $C>0$
such that
$$
1-C d^s(x,\partial \Omega^+)\leq U(x) \leq C d^s(x,\partial \Omega^-).
$$
Indeed, if $\mathcal{P}\subset\Omega^{c,-}$
is a ``parabolic set'' touching $\partial\Omega^-$ from the left, then the function
\begin{align}
g(x)=\left\{
\begin{array}{lcl}
A d^{s}(x,\partial\mathcal{P})\wedge 1 &\text{if}  &x\in \overline\Omega\cup\Omega^{c,+},\\
0 &\text{if} &x\in \Omega^{c,-}.
\end{array}\right. 
\end{align}
is a supersolution for $A>0$ large enough so that $Ad^{s}(x,\partial\mathcal{P})\geq 1$ for all $x\in \Omega^{c,+}$.
Also, $g\geq 1$ outside a compact subset of $\Omega$. So Theorem \ref{comparison} implies that any $u\in\mathcal{F}$ is bounded from above by $g$, and the same is true for $U$.
The decay is argued in an analogous way.}
\end{remark}

\subsection{Existence and Comparison}
We are now in a position to show $U$ is uniformly monotone. This will allow us to apply Perron's method to show that $U$ is a solution, and establish a comparison principle, proving Theorem \ref{PDEmthrm}.
\begin{lemma}\label{unifmonotone}
There exists $\beta>0$ such that
$U$ is uniformly monotone in the $e_1$ direction (away from $\partial\Omega$) in the sense of Definition \ref{unifmonotonedefn} with $\alpha=s+1$.
More precisely, there exists $h_0>0$ such that, for any $x_0 \in \Omega$,
\begin{align}
U(x_0)+\beta h^{1+s}\leq U(x_0+e_1h)\qquad \forall \,h<\min\left\{d(x_0,\partial\Omega), h_0\right\}.\notag
\end{align}
\end{lemma}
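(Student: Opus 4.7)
The strategy is a direct Perron-type construction: for every $\delta > 0$, I will exhibit a subsolution $w_\delta \in \mathcal{F}$ with $w_\delta(x_0 + he_1) \geq U(x_0) - \delta + \beta h^{1+s}$; since $w_\delta \leq U$ on $\mathbb{R}^N$ by definition of $U$, sending $\delta \to 0$ concludes. The candidate $w_\delta$ is obtained as the maximum of a translated near-optimal subsolution (which transports the value $U(x_0)-\delta$ to the point $x_0+he_1$) and a paraboloid barrier supplied by Lemma~\ref{growth1construction} (which provides the quantitative gap $\beta h^{1+s}$).

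Concretely, pick $u^\delta \in \mathcal{F}$ with $u^\delta(x_0) \geq U(x_0) - \delta$ and set $v^\delta(x) := u^\delta(x - he_1)$ on $\overline\Omega$, extended by $0$ on $\Omega^{c,-}$ and by $1$ on $\Omega^{c,+}$. Since $he_1 \in C^+$, the translation argument already used in the proof of Proposition~\ref{prop:monotone} gives $v^\delta \in \mathcal{F}$, and $v^\delta(x_0 + he_1) = u^\delta(x_0)$. Next I apply Lemma~\ref{growth1construction} with $\mathcal{S} = \{q\}$, $q = x_0 + (t_0 - t_{x_0})e_1$, where $t_{x_0} = x_0 \cdot e_1 - \Gamma_1(\hat x_0)$: the point $x_0 + he_1$ lies in the family of admissible centers because $(x_0 + he_1) - (q - t_0 e_1) = (t_{x_0} + h)e_1 \in C^+$. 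Saturating the admissibility constraint $A C_\theta^{2-2s}t_0^{2-2s}=c_0$ and then choosing $t_0$ so that the resulting peak height $c_0 C_\theta^{2s} t_0^{2s}$ equals $u^\delta(x_0) + \beta h^{1+s}$ produces $P \in \mathcal{F}$ with $P(x_0 + he_1) = u^\delta(x_0) + \beta h^{1+s}$. Then $w_\delta := \max(v^\delta, P) \in \mathcal{F}$ by Lemma~\ref{maxsubissub} and the chain
$$
U(x_0 + he_1) \;\geq\; w_\delta(x_0 + he_1) \;\geq\; P(x_0 + he_1) \;=\; u^\delta(x_0) + \beta h^{1+s} \;\geq\; U(x_0) - \delta + \beta h^{1+s}
$$
yields the claim after $\delta \to 0$.

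The delicate point is the calibration of the paraboloid: the prescribed peak $u^\delta(x_0) + \beta h^{1+s}$ must be admissible, in the sense that the corresponding $t_0$ satisfies $t_0 \leq M$ and $P \leq 1$ on $\Omega$ (required to have $P \in \mathcal{F}$). The decay estimate $U(x) \leq 1 - \epsilon d^s(x, \partial\Omega^+)$ from Lemma~\ref{growth2} forces $u^\delta(x_0) \leq 1 - \epsilon h^s$ whenever $h \leq d(x_0, \partial\Omega^+)$, so $u^\delta(x_0) + \beta h^{1+s} \leq 1 - (\epsilon - \beta h) h^s \leq 1$ provided $\beta h_0 \leq \epsilon$. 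This is precisely what dictates both the smallness of $h_0$ and the exponent $1+s$: it is the largest power of $h$ for which the inequality $u^\delta(x_0) + \beta h^{1+s} \leq 1$ follows from the $d^s$-decay of $U$ near $\partial\Omega^+$. The analogous admissibility at $\partial\Omega^-$ is automatic from Lemma~\ref{growth2}, and the remaining constraint $t_0 \leq M$ is ensured by choosing the geometric constant $\beta$ sufficiently small.
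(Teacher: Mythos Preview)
Your argument has a genuine gap at the calibration step. Lemma~\ref{growth1construction} requires $t_0\in(0,M]$ together with $A\,C_\theta^{2-2s}t_0^{2-2s}\le c_0$; saturating the latter gives peak height $c_0\,C_\theta^{2s}t_0^{2s}$, which over all admissible $t_0$ is bounded above by the fixed geometric constant $c_0\,C_\theta^{2s}M^{2s}$. This constant is generically much smaller than $1$ (indeed, from the proof of Lemma~\ref{growth1construction} one sees $c_0$ is of order $(C_\theta/M)^{2s}$). Hence whenever $U(x_0)$ exceeds $c_0\,C_\theta^{2s}M^{2s}$ --- for instance when $x_0$ is near $\partial\Omega^+$ so that $U(x_0)$ is close to $1$ --- there is no admissible $t_0$ for which the paraboloid reaches $u^\delta(x_0)+\beta h^{1+s}$. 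Your ``delicate point'' paragraph checks only that the prescribed peak is $\le 1$, which is a much weaker condition than the actual admissibility constraint of Lemma~\ref{growth1construction}; the appeal to $U\le 1-\epsilon d^s(\cdot,\partial\Omega^+)$ is therefore beside the point. Note also that $v^\delta$ plays no role in your chain of inequalities: you bound $w_\delta(x_0+he_1)$ from below by $P(x_0+he_1)$ alone, so the construction collapses to ``$P\in\mathcal F$ and $P$ is large at $x_0+he_1$'', which fails as explained.

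The paper's proof uses a different mechanism that does not attempt to build an absolute barrier of the required height. It takes the translated-and-lifted function $u^\delta(\cdot-he_1)+\beta h^{1+s}$ itself (capped to the boundary data and to $u^\delta$ near $\partial\Omega$) and verifies directly that it is a subsolution. The point is that when you evaluate $\I$ on this function, the $d^s$ growth/decay estimate of Lemma~\ref{growth2} produces a \emph{gain} in the integral over the strip of width $h$ adjacent to each boundary, of order $\epsilon h^{1+s}$, while the \emph{loss} from modifying the values outside $\Omega$ is only $\beta h^{1+s}$; choosing $\beta$ small relative to $\epsilon$ makes the gain dominate. This is where the exponent $1+s$ genuinely arises --- from integrating the $d^s$ boundary growth across a strip of width $h$ --- not from an ad hoc calibration.
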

\begin{proof}
Fix a point $x_0\in \Omega$, and consider a sequence of subsolutions $u^\delta$ such that $U(x_0)\leq u^\delta(x_0)+\delta$, with $\delta \to 0$.
Arguing as in the proof of Proposition \ref{prop:monotone}, up to replace $u^\delta$ with $\sup_{y \in C^+}u^\delta_y$ with $u^\delta_y$
as in \eqref{uhdefn}, we can assume that $u^\delta$ is monotone in all directions $y\in C^+$.
Moreover, let $u$ be a subsolution such that 
\begin{align}
\epsilon d^{s}(x,\partial\Omega^-)\leq u(x)\leq 1-\epsilon d^{s}(x,\partial\Omega^+)\label{growthestimate1}
\end{align}
for some $\epsilon$ small enough, and  $u$ is monotone in all directions $y\in C^+$.
We know such a subsolution $u$ exists because we constructed one in the proof of Lemma \ref{growth2}.
By possibly taking the maximum of $u^\delta$ and $u$ (and applying Lemmas \ref{maxsubissub} and \ref{growth2}) we can assume (\ref{growthestimate1}) holds for $u^\delta$. 

With this in mind we define, for any $\beta>0$ and $h<d(x_0,\partial\Omega)$,
\begin{align}
u^\delta_h(x)= \left\{
\begin{array}{lclcl}
u^\delta(x)&\text{if} &x\in \Omega&\text{and}&d(x,\partial\Omega)\leq 	h,\\
u^\delta(x)\vee (u^\delta(x-he_1)+\beta h^{1+s})&\text{if} &x\in \Omega&\text{and}&d(x,\partial\Omega)>h,\\
1&\text{if} &x\in \Omega^{c,+},\\
0&\text{if} &x\in \Omega^{c,-}.
\end{array}\right.\notag
\end{align}
(See figure \ref{fig6}.)
We will show that there exists a universal $\beta>0$ such that, for $h$ small (the smallness depending only on the geometry of the problem), $u^\delta_h$ is a subsolution. 
Once we have established $u^\delta_h$ is a subsolution it follows that $U(x)\geq u^\delta_h(x)$ and therefore, if $d(x_0,\partial\Omega)>h$,
\begin{align}
U(x_0)\leq u^\delta(x_0)+\delta =u^\delta_h(x_0+he_1)-\beta h^{1+s}+\delta\leq U(x_0+he_1)-\beta h^{1+s}+\delta.\notag
\end{align}
Since $\delta>0$ is arbitrary this will establish the lemma.

Let $\phi\in C^{1,1}(x)\cap BC(\mathbb{R}^N)$ touch $u^\delta_h$ from above at $x$.
If $u^\delta(x)\geq u^\delta(x-he_1)+\beta h^{1+s}$ then $\I\tilde{u}_h^\delta(x)\geq \I\tilde{u}^\delta(x)\geq 0$ where $\tilde{u}_h^\delta$ and
$\tilde{u}^\delta$ are described by (\ref{subsupcut}).  So, it remains to check the case $u^\delta(x-he_1)+\beta h^{1+s}>u^\delta(x)$.

\begin{figure}
\centerline{\epsfysize=2.7truein\epsfbox{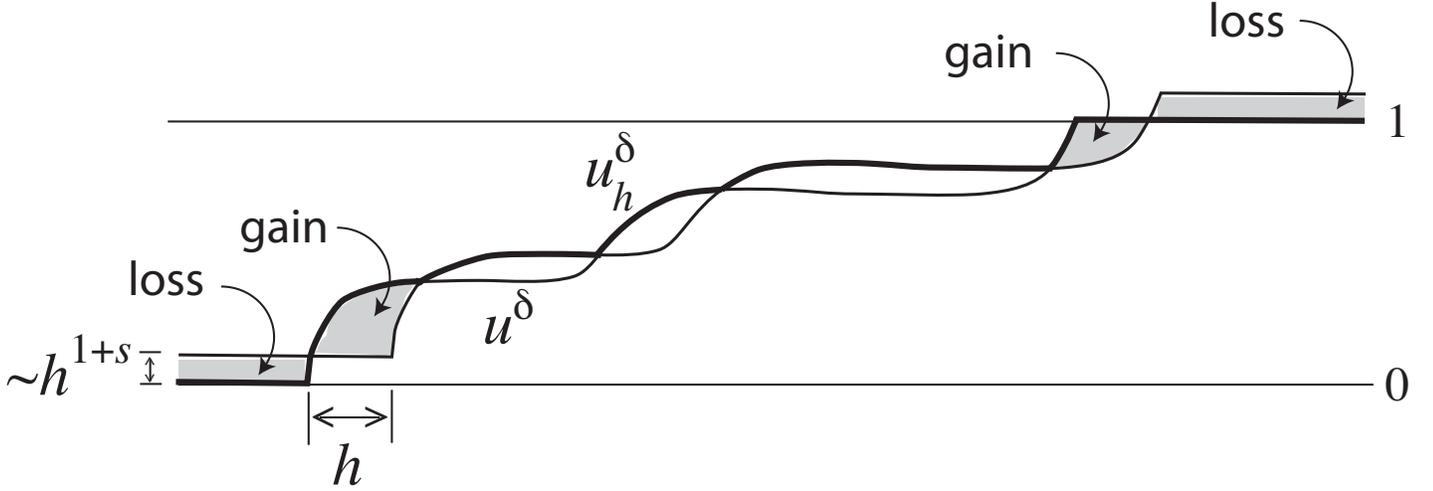}}
\caption{Gain and loss for ${u}^\delta_h$.}
\label{fig6}
\end{figure}

To show $\I\tilde{u}^\delta_h(x)\geq 0$ in this case, we use that $\I(\tilde{u}^\delta(x-he_1)+\beta h^{1+s})\geq 0$,
and then estimate the difference between the operator applied to $u^\delta(\cdot-he_1)+\beta h^{1+s}$ and the operator applied to $u^\delta_h$.
There is a positive contribution from the growth (and decay) near the boundaries (see \eqref{growthestimate1}),
and there is a negative contribution from changing the value of $u^\delta(\cdot-he_1)+\beta h^{1+s}$ inside $\R^N\setminus \Omega$, as depicted in Figure \ref{fig6}.
So, the goal will be to show the gain overpowers the loss for $\beta$ and $h$ small.

Observe that $\phi(\cdot)-\beta h^{1+s}$ touches $u^\delta$ from above at $x-he_1$.
So, by the monotonicity of $u^\delta$ along directions of $C^+$, we know that either $\nabla\phi(x)=0$ or $C_\theta\leq v\cdot e_1$, where $v$ is the direction of $\nabla\phi(x)$.
Consider first the case where $\nabla\phi(x)\neq 0$.  Let $t>0$ denote the distance from $x$ to the set $\{z: d(z,\partial\Omega^-) \leq h\}$ along the line passing through $x$
with direction $\nabla\phi(x)$, and let $t'>0$ denote the distance between $x$ and the set $\{z: d(z,\partial\Omega^+) \leq h\}$ along this same line. Then
\begin{align}
t+h,t'+h<\frac{M}{C_\theta}\label{test}.
\end{align}
Moreover, thanks to the growth estimate \eqref{growthestimate1} on $u^\delta$ near $\partial\Omega$,
we can estimate
\begin{align}
 \I\tilde{u}^\delta_h(x)\geq &\I\tilde{u}^\delta(x-e_1 h)-\int_{-\infty}^{-t-h} \frac{\beta h^{1+s}}{|\eta|^{1+2s}}\,d\eta +\int_{-t-h}^{-t} \frac{[\epsilon(t+h+\eta)^s-\beta h^{1+s}]}{|\eta|^{1+2s}}\,d\eta\notag\\
&+\int_{t'}^{t'+h} \frac{[\epsilon(t'+h-\eta)^s-\beta h^{1+s}]}{|\eta|^{1+2s}}\,d\eta-\int^{\infty}_{t'+h} \frac{\beta h^{1+s}}{|\eta|^{1+2s}}\,d\eta.\notag
\end{align}
The third and fourth terms represent the gain from the growth near the boundary, while in the second and fifth terms
we considered the worst case in which $x+\eta v \in \R^N\setminus \Omega$ for $\eta \in (-\infty,-t-h] \cup [t'+h,\infty)$ so that we have
a loss from the change in $\R^N\setminus \Omega$.
We need to establish
\begin{align}
\frac{1}{2s}\frac{\beta h^{1+s}}{(t+h)^{2s}}=\int_{-\infty}^{-t-h} \frac{\beta h^{1+s}}{|\eta|^{1+2s}}\,d\eta \leq\int_{-t-h}^{-t}
\frac{[\epsilon(t+h+\eta)^s-\beta h^{1+s}]}{|\eta|^{1+2s}}\,d\eta\notag,
\end{align}
and a similar statement replacing $t$ by $t'$.

In that direction, we estimate
\begin{align}
\int_{-t-h}^{-t} \frac{[\epsilon(t+h+\eta)^s-\beta h^{1+s}]}{|\eta|^{1+2s}}\,d\eta&\geq \frac{1}{(t+h)^{1+2s}}\int_{-t-h}^{-t} [\epsilon(t+h+\eta)^s-\beta h^{1+s}]\,d\eta\notag\\
&=\frac{1}{(t+h)^{1+2s}}\left(\frac{\epsilon}{1+s}h^{s+1}-\beta h^{2+s}\right).\notag
\end{align}
The same statement holds replacing $t$ with $t'$. Hence,
\begin{align}
\I\tilde{u}^\delta_h(x_0)\geq &\frac{h^{s+1}}{{(t'+h)}^{2s}}\left(\frac{\epsilon}{1+s}\frac{1}{t'+h} - \frac{\beta}{2s} - \frac{\beta h}{t'+h}\right)\notag\\
&+  \frac{h^{1+s}}{{(t+h)}^{2s}}\left(\frac{\epsilon}{1+s}\frac{1}{t+h} - \frac{\beta}{2s}-\frac{\beta h}{t+h}\right).\notag
\end{align}
Thanks to \eqref{test}, it suffices to choose $\beta=\min\left\{\frac{\epsilon}{2(1+s)},\frac{sC_\theta\epsilon}{(1+s)M}\right\}$
to get $\I\tilde{u}^\delta_h\geq 0$. This concludes the proof when $\nabla\phi(x_0)\neq 0$.  

A nearly identical argument holds for the case $\nabla\phi(x_0)=0$. Indeed, if $y$ is the supremum in the definition of $\I\tilde{u}^\delta_h$ then $C_\theta\leq y\cdot e_1$, a consequence of
the monotonicity of $u^\delta$ in all directions of $C^+$.
Analogously, if $z\in S^{N-1}$ is the direction for the infimum in the definition of $\I\tilde{u}^\delta_h$, then $z \in C^-$.
Let $t'$ is the distance from $x$ to the set $\{z: d(z,\partial\Omega^+) \leq h\}$ along
the line with direction $y$, and let $t$ be the distance from $x$ to $\{z: d(z,\partial\Omega^+) \leq h\}$ along the line with direction $z$.
Then (\ref{test}) holds and we can follow the same argument as above to establish $\I\tilde{u}^\delta_h\geq 0$.
\end{proof}

As we already said before, this uniform monotonicity implies we can only touch $U$ by test
functions which have a non-zero derivative. Indeed the following general lemma holds:
\begin{lemma}\label{notouchlem}
Let $u$ be uniformly monotone in the sense of Definition \ref{unifmonotonedefn} for some $\alpha<2$.  If a test function $\phi\in C^{1,1}(x_0)\cap BC(\mathbb{R}^N)$ touches $u$ from above or from below at $x_0\in \Omega$ then $\nabla\phi(x_0)\neq 0$.
\end{lemma}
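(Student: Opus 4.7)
The plan is to argue by contradiction: assume $\nabla\phi(x_0)=0$ and combine the $C^{1,1}$ bound on $\phi$ with the uniform monotonicity of $u$ to force $\beta h^\alpha \leq M h^2$ for all small $h>0$, which is impossible when $\alpha<2$ and $\beta>0$.

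More precisely, if $\nabla\phi(x_0)=0$, Definition \ref{C11defn} gives constants $M,\eta_0>0$ such that
\begin{align}
|\phi(x_0+x)-\phi(x_0)|\leq M|x|^2\qquad \text{for all }|x|<\eta_0.\notag
\end{align}
Now consider the two cases separately. If $\phi$ touches $u$ from above at $x_0$, then for $h>0$ small one has $u(x_0+he_1)\leq \phi(x_0+he_1)\leq \phi(x_0)+Mh^2=u(x_0)+Mh^2$. Combined with the uniform monotonicity inequality
\begin{align}
u(x_0)+\beta h^\alpha \leq u(x_0+he_1)\notag
\end{align}
(valid for $h\leq h_0$), this yields $\beta h^\alpha \leq M h^2$, and letting $h\to 0$ contradicts $\beta>0$ since $\alpha<2$.

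If instead $\phi$ touches $u$ from below at $x_0$, I would apply the uniform monotonicity at the point $x_0-he_1$ (which lies in the open set $\Omega$ for $h$ small enough) to obtain $u(x_0)-u(x_0-he_1)\geq \beta h^\alpha$. On the other hand, since $\phi\leq u$ near $x_0$ and $\phi(x_0)=u(x_0)$, we get $u(x_0-he_1)\geq \phi(x_0-he_1)\geq \phi(x_0)-Mh^2=u(x_0)-Mh^2$, so $u(x_0)-u(x_0-he_1)\leq Mh^2$. Again $\beta h^\alpha \leq Mh^2$, and the same contradiction closes the argument.

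This is essentially a one-line observation once the definitions are unwound, so there is no real obstacle; the only mild point to keep track of is that in the ``touching from below'' case one must use the monotonicity at a shifted base point $x_0-he_1\in\Omega$ rather than at $x_0$ itself, which is harmless since $\Omega$ is open.
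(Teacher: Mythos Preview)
Your proof is correct and follows essentially the same approach as the paper: argue by contradiction, combine the $C^{1,1}$ estimate (with $p=0$) on $\phi$ with the uniform monotonicity of $u$ to obtain $\beta h^\alpha \leq M h^2$, and conclude from $\alpha<2$. You are in fact slightly more explicit than the paper in the ``touching from below'' case, correctly noting that one should apply the monotonicity hypothesis at the shifted base point $x_0-he_1\in\Omega$; the paper simply says ``a similar argument is made.''
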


\begin{proof}
Say $\phi$ touches $u$ from above at $x_0$ and $\nabla\phi(x_0)=0$.  Applying the uniformly monotone assumption then the $C^{1,1}$ definition we find
\begin{align}
\beta h^{\alpha} \leq u(x_0+hy)-u(x_0) \leq \phi(x_0+hy)-\phi(x_0) \leq M h^2\notag
\end{align}
for some constant $M>0$ and small $h>0$.  As $\alpha <2$, this gives a contradiction for small $h$.  A similar argument is made if $\phi$ touches $U$ from below at $x_0$.
\end{proof}

The above lemma combined with Lemma \ref{notouchlem} immediately gives the following:
\begin{corollary}\label{notouchcor}
If a test function $\phi\in C^{1,1}(x_0)\cap BC(\mathbb{R}^N)$ touches $U$ from above or from below at $x_0\in \Omega$, then $\nabla\phi(x_0)\neq 0$.
\end{corollary}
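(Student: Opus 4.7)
The plan is essentially to combine the two preceding results with no additional work. By Lemma \ref{unifmonotone}, the Perron candidate $U$ is uniformly monotone in the $e_1$ direction in the sense of Definition \ref{unifmonotonedefn} with exponent $\alpha = 1+s$ and some universal constant $\beta>0$ (the point-dependent threshold $h_0$ in Definition \ref{unifmonotonedefn} being supplied at each interior $x_0$ by $\min\{d(x_0,\partial\Omega),h_0\}$, which is strictly positive since $x_0\in\Omega$). Since $s\in(\tfrac{1}{2},1)$, this exponent satisfies $\alpha = 1+s < 2$, which is precisely the hypothesis on $\alpha$ required to invoke Lemma \ref{notouchlem}.

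Once this observation is made, the corollary follows immediately: given any test function $\phi\in C^{1,1}(x_0)\cap BC(\mathbb{R}^N)$ touching $U$ from above or from below at an interior point $x_0\in\Omega$, Lemma \ref{notouchlem} applied to $u=U$ with $\alpha = 1+s$ forces $\nabla\phi(x_0)\neq 0$, which is the desired conclusion. There is no real obstacle in the proof of the corollary itself; the substantive content is packaged into Lemma \ref{unifmonotone}, where the sharp monotonicity rate $h^{1+s}$ -- strictly slower than the quadratic upper bound $M h^{2}$ enjoyed by any $C^{1,1}$ test function with vanishing gradient, hence incompatible with it -- was obtained through a careful balance between the boundary ``gain'' coming from the $d^{s}$ growth/decay estimates of Lemma \ref{growth2} and the ``loss'' created by modifying $u^{\delta}(\cdot - he_1)+\beta h^{1+s}$ outside $\Omega$.
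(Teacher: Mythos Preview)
Your proposal is correct and matches the paper's approach exactly: the corollary is obtained by combining Lemma \ref{unifmonotone} (uniform monotonicity of $U$ with exponent $\alpha=1+s<2$) with Lemma \ref{notouchlem}, and the paper itself simply states that these two lemmas immediately give the result.
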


Thanks to above result, we can take advantage of the fact that the operator $\I$ is stable at non-zero gradient point to show that $U$ is a solution.
We start showing that it is a subsolution.
\begin{lemma}\label{Usubsolnlemma}
$U$ is a subsolution. Moreover $U \in C^{0,2s-1}(\R^n)$.
\end{lemma}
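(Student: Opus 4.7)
The plan is to follow the classical two-step Perron strategy: work first with the upper semicontinuous envelope $U^*(x) := \limsup_{y\to x} U(y) \geq U(x)$, show it is a subsolution, apply the regularity Theorem~\ref{holdcontthrm}, and then identify $U^* = U$.

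First I would verify that $U^*$ inherits the uniform monotonicity of $U$ from Lemma~\ref{unifmonotone}. For $x\in\Omega$ with $d(x,\partial\Omega)$ suitably large, take $y_k\to x+he_1$ with $U(y_k)\to U^*(x+he_1)$ and apply the monotonicity inequality at $y_k-he_1$; using $U(y_k-he_1)\leq U^*(y_k-he_1)$ together with upper semicontinuity of $U^*$, one obtains $U^*(x)+\beta h^{1+s}\leq U^*(x+he_1)$. By Lemma~\ref{notouchlem} (applicable since $\alpha = s+1 < 2$), any $C^{1,1}$ test function touching $U^*$ from above inside $\Omega$ must have non-zero gradient at the contact point.

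The core Perron step shows $U^*$ is a subsolution. Given a test function $\phi$ that strictly touches $U^*$ from above on a ball $B_r(x_0)\subset\Omega$, pick $x_n\to x_0$ with $U(x_n)\to U^*(x_0)$ and subsolutions $u_n\in\mathcal F$ with $u_n(x_n)\to U^*(x_0)$. Since $\phi-U^*\geq c>0$ on $\partial B_r(x_0)$ by strict touching and compactness (using USC of $U^*$), the function $\phi-u_n$ attains an interior minimum at some $y_n\in B_r(x_0)$ with value $\delta_n\to 0$; the one-sided bound $u_n\leq U^*$ together with the USC of $U^*$ and strict touching force $y_n\to x_0$. Since $\phi-\delta_n$ touches $u_n$ from above at $y_n$, we have $\I\tilde u_n(y_n)\geq 0$ on the radius $r_n := r-|y_n-x_0|\to r$. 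The inner integral converges to its $\phi$-counterpart on $B_r(x_0)$ by the $C^{1,1}$ regularity of $\phi$ and because $\nabla\phi(y_n)\to\nabla\phi(x_0)\neq 0$; for the outer integral I would invoke reverse Fatou, using $u_n\leq U^*$, the uniform domination $|u_n|\leq 1$ against the integrable kernel $\eta^{-1-2s}$ on $[r/2,\infty)$, the USC of $U^*$, and $u_n(y_n)\to U^*(x_0)$. The conclusion is $\I\tilde U^*(x_0)\geq 0$.

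With $U^*$ a subsolution, Theorem~\ref{holdcontthrm} gives $U^*\in C^{0,2s-1}(\R^N)$: the data $f$ (equal to $0$ on $\Omega^{c,-}$ and $1$ on $\Omega^{c,+}$) lies in $C^{0,2s-1}(\R^N\setminus\Omega)$ because the two components are separated by distance $\geq m$, and the cusp bound \eqref{eq:control f} holds trivially for $z\in\Omega^{c,-}$ (since $U^*\geq 0$) and for $z\in\Omega^{c,+}$ by combining the sharp decay $U^*(x)\geq 1-Cd^s(x,\partial\Omega^+)$ from Remark~\ref{rmk:growth} with $s>2s-1$ and the boundedness of $d(x,\partial\Omega^+)$. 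Since $U^*$ is now continuous with the correct boundary values, $U^*\in\mathcal F$, hence $U^*\leq U$; combined with $U^*\geq U$ this yields $U=U^*$, completing the proof. The main obstacle is the Perron step, particularly justifying the passage to the limit in the non-local operator via reverse Fatou, which hinges on the one-sided bound $u_n\leq U^*$ and USC of $U^*$ to control the integrand uniformly in $n$ while $r_n$ shrinks to $r$.
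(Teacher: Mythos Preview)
Your approach is essentially correct but differs from the paper's. The paper does not work with the upper semicontinuous envelope $U^*$; instead, it first exploits the observation that the cusps $1 - m_0^{1-2s}|x-x_0|^{2s-1}$ (with $x_0 \in \partial\Omega^+$, $m_0 = d(\partial\Omega^-,\partial\Omega^+)$) already belong to $\mathcal{F}$. Taking the maximum of any $u \in \mathcal{F}$ with all such cusps keeps it in $\mathcal{F}$ (Lemma~\ref{maxsubissub}) and forces the hypothesis \eqref{eq:control f} of Theorem~\ref{holdcontthrm}, so the improved family is uniformly $C^{0,2s-1}$ with a fixed seminorm. Arzel\`a--Ascoli then yields a sequence $u_n \in \mathcal{F}$ converging to $U$ uniformly on compact sets, and the ready-made stability Theorem~\ref{stability} (together with Corollary~\ref{notouchcor}) finishes the job. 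In short: regularity of the whole family first, then compactness, then stability.

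Your route inverts this order: you first run a bare-hands Perron argument on $U^*$ (essentially re-proving a one-sided version of Theorem~\ref{stability} via reverse Fatou, using the bound $u_n\leq U^*$ and upper semicontinuity), then apply Theorem~\ref{holdcontthrm} to $U^*$ alone, and close by identifying $U^*=U$. This works and is more self-contained, but it forfeits the structural economy of invoking Theorem~\ref{stability} directly, and it relies on Remark~\ref{rmk:growth} where the paper uses only the simpler cusp barriers.

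Two small points to patch. First, your monotonicity transfer to $U^*$ is written backwards: you should pick $y_k \to x$ with $U(y_k) \to U^*(x)$, apply $U(y_k) + \beta h^{1+s} \leq U(y_k+he_1)$, and then use upper semicontinuity of $U^*$ at $x+he_1$; the sequence you chose (converging to $x+he_1$) gives an inequality in the wrong direction. Second, before invoking Theorem~\ref{holdcontthrm} you must verify $U^*=f$ on $\R^N\setminus\Omega$, in particular $U^*=0$ on $\partial\Omega^-$; this requires the upper bound $U \leq C d^s(\cdot,\partial\Omega^-)$ from Remark~\ref{rmk:growth}, which you cite only for the lower bound near $\partial\Omega^+$.
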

\begin{proof}
Fix $x_0\in\Omega$ and let $\phi\in C^{1,1}(x_0)$ touch $U$ from above at $x_0$. Corollary \ref{notouchcor} implies $\nabla\phi(x_0)\neq 0$.
Setting $m_0:=d(\partial\Omega^-,\partial\Omega^+) \geq C_\theta m>0$ (here $m$ is as in \eqref{eq:m M}), the cusp $1-m_0^{1-2s}|x-x_0|^{2s-1}$ is a member of the family $\mathcal{F}$ defined by
(\ref{monotonefamily}) when $x_0\in\partial\Omega^+$. So, using Lemma \ref{maxsubissub} and Theorem \ref{holdcontthrm} together with the Arzel\`a-Ascoli theorem,
we can easily construct a sequence of equicontinuous and bounded subsolutions $u_n$ such that:
\begin{itemize}
\item $0\leq u_n(x)\leq 1$ for all $x\in \mathbb{R}^N$,
\item $u_n$ converges uniformly to $U$ in compact subsets of $\Omega$.
\end{itemize}
This sequence satisfies the assumptions of Theorem \ref{stability} and we conclude $U$ is a subsolution.
The H\"older regularity of $U$ follows from the one of $u_n$ (equivalently, once we know $U$ is a subsolution, we can deduce its H\"older regularity using Theorem \ref{holdcontthrm}).
\end{proof}

\begin{theorem}\label{monoextthrm}
 $U(x)$ is a solution. Moreover $U \in C_{0,{\rm loc}}^{0,2s-1}(\Omega)$.
\end{theorem}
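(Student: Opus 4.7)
Lemma~\ref{Usubsolnlemma} already gives that $U$ is a subsolution and $U\in C^{0,2s-1}(\R^N)$, so two things remain: (i) $U$ is a supersolution in $\Omega$, and (ii) $U\in C_{0,\mathrm{loc}}^{0,2s-1}(\Omega)$. Statement (ii) is automatic from (i): once $U$ is known to be a solution in $\Omega$, it is in particular a ``solution at non-zero gradient points,'' and Corollary~\ref{cor:improved holder} upgrades the already established global H\"older regularity to $C^{0,2s-1}_{0,\mathrm{loc}}$ inside $\Omega$. So the real work is to prove (i), which I would do by the classical Perron bump argument.

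Suppose by contradiction that $U$ fails to be a supersolution at some $x_0\in\Omega$. Then there exist $r>0$ and $\phi\in C^{1,1}(x_0)\cap C(\bar B_r(x_0))$ with $\phi(x_0)=U(x_0)$ and $\phi<U$ on $B_r(x_0)\setminus\{x_0\}$, such that the cut function $\tilde U$ obtained by replacing $U$ with $\phi$ inside $B_r(x_0)$ satisfies $\I\tilde U(x_0)>0$. The crucial input is Corollary~\ref{notouchcor}, which forces $\nabla\phi(x_0)\neq 0$: at such points, the operator $\I$ acts as the linear $s$-fractional Laplacian along the gradient direction, so it depends continuously on $C^{1,1}$ perturbations of the test function and on $L^\infty_{\mathrm{loc}}$ perturbations at infinity (bounded by the tail weight $\eta^{-1-2s}$). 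The strict positivity $\I\tilde U(x_0)>0$ will therefore be preserved under sufficiently small perturbations near $x_0$.

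The bump construction then proceeds as follows. For small $\epsilon>0$ and a fixed $c>0$, set $\phi_\epsilon(x):=\phi(x)+\epsilon-c|x-x_0|^2$, so $\phi_\epsilon(x_0)=U(x_0)+\epsilon$ and $\nabla\phi_\epsilon(x_0)=\nabla\phi(x_0)\neq 0$. Since $\phi<U$ on $\partial B_{r/2}(x_0)$ with a uniform gap, for $\epsilon$ small and $r'\in(0,r/2)$ appropriately chosen, $\phi_\epsilon<U$ on $\partial B_{r'}(x_0)$. Define
\[
v(x)=\begin{cases}\phi_\epsilon(x)\vee U(x) & x\in B_{r'}(x_0),\\ U(x) & x\notin B_{r'}(x_0).\end{cases}
\]
Then $v$ is continuous, satisfies $v\geq U$ with $v(x_0)>U(x_0)$, and differs from $U$ only on a compact subset of $\Omega$. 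To check $v$ is a subsolution: at points of $B_{r'}(x_0)$ where $v=U$, monotonicity of $\I$ under pointwise increase of the argument yields $\I\tilde v\geq\I\tilde U\geq 0$; at points where $v=\phi_\epsilon>U$, I would exploit the continuity discussed above together with the margin $\I\tilde U(x_0)>0$ to conclude that $\I\tilde v\geq 0$ throughout a neighborhood, for $\epsilon$ and $r'$ small. Lemma~\ref{growth2} provides the strict upper bound $U<1$ inside $\Omega$ with enough room to keep $v\leq 1$ as well, so $v\in\mathcal{F}$ as defined in \eqref{monotonefamily}. This contradicts $U(x_0)=\sup_{u\in\mathcal{F}}u(x_0)$, completing the proof.

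The main technical obstacle is precisely the second case of the subsolution verification: propagating the strict sign $\I\tilde U(x_0)>0$ from the single point $x_0$ to a full neighborhood $B_{r'}(x_0)$, while at the same time swapping the interior function from $\phi$ to $\phi_\epsilon$ and replacing $U$ by $\phi_\epsilon$ on a small set. This is exactly where the guarantee $\nabla\phi(x_0)\neq 0$ coming from the uniform monotonicity of $U$ (via Corollary~\ref{notouchcor}) is indispensable: at a zero-gradient point the operator $\I$ is discontinuous under $C^0$ perturbations (as highlighted in the introduction via the $\sup$--$\inf$ duality), and no such bump argument could close. Because monotonicity is available, everything reduces to a controlled continuity statement for the linear fractional Laplacian along a fixed direction, which is routine.
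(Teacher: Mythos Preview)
Your proposal is correct and follows essentially the same Perron bump argument as the paper: assume $U$ is not a supersolution at some $x_0$, invoke Corollary~\ref{notouchcor} to guarantee $\nabla\phi(x_0)\neq 0$, use the resulting continuity of $\I$ to propagate the strict sign to a neighborhood, and build a competitor in $\mathcal{F}$ strictly above $U(x_0)$. The only cosmetic differences are that the paper lifts the test function by a pure constant $\psi+\delta$ (your quadratic correction $-c|x-x_0|^2$ is harmless but unnecessary, since the max with $U$ already handles the gluing), and that you explicitly note via Lemma~\ref{growth2} that $U<1$ strictly inside $\Omega$ so the bumped function stays $\leq 1$---a point the paper leaves implicit.
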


\begin{proof}
The $C_{0,{\rm loc}}^{0,2s-1}$ regularity of $U$ will follow immediately from Corollary  \ref{cor:improved holder} once we will know that $U$ is a solution.
Let us prove this.

We will assume by contradiction $U$ is not a supersolution, and we will show there exists a subsolution to \eqref{monotonefamily}
which is strictly grater than $U$ at some point, contradicting the definition of $U$.

If $U$ is not a supersolution then there is a point $x_0\in \Omega$, a test function $\psi\in C^{1,1}(x_0)$ touching $U$ from below at $x_0$, and two constants $r,\rho>0$ such that
$\I\hat{U}(x_0)\geq 2\rho$ where
\begin{align}
\hat{U}(x):=\left\{
\begin{array}{ccl}
\psi(x) &\text{if}  &x\in  B_r(x_0),\notag\\
U(x) &\text{if} &x\in \mathbb{R}^N\setminus B_r(x_0).\notag
\end{array}\right.
\end{align}
By possibly considering a smaller $r$ and $\rho$ we may assume $\psi\in C^2(B_r)$ (for instance, it suffices to replace $\psi$ by a paraboloid).
We will show there are $\delta_0,r_0>0$ small such that, for any $\delta\in(0,\delta_0)$, $w_\delta(x)=U(x)\vee [(\psi(x)+\delta)\chi_{B_{r_0}(x_0)}]$ is a subsolution.
(Here and in the sequel, $\chi_S$ denotes the indicator function for the set $S$.)
Clearly $w(x_0)>U(x_0)$, which will contradict the definition of $U(x)$ and prove that $U$ is a supersolution.

For $x\in B_r(x_0)$ we can evaluate $\I\hat{U}(x)$ directly, without appealing to test functions.
First we note $\I\hat{U}(x)$ is continuous near $x_0$, an immediate consequence
of the continuity of $\psi$, $\nabla \psi$, $U$, along with $\nabla\psi(x_0)\neq 0$
given by Corollary \ref{notouchcor}.  Hence, we deduce the existence of a constant $r_0<\frac{r}{2}$ such that $\I\hat{U}(x)>\rho$ and $\nabla\psi(x)\neq 0$ for $x\in B_{r_0}(x_0)$.
Since by assumption $\psi<U$ on $B_r(x_0)\setminus \{x_0\}$, there exists $\delta_1>0$ small enough that $\psi(x)+\delta_1<U(x)$ for all $x\in B_r(x_0)\setminus B_{r_0}(x_0)$.

Consider now $w_\delta$ for any $\delta<\delta_1$.  If $\phi\in C^{1,1}(x)$ touches $w_\delta$ from above at $x\in \Omega$, then it touches either $U$ or
$\psi+\delta$ from above at $x$. In the first case we use the fact that $U$ is a subsolution and calculate
$\I\tilde{w}_\delta(x)\geq I\tilde{U}(x)\geq 0$.
Here $\tilde{w}$ and $\tilde{U}$ are as described in (\ref{subsupcut}).  If $\phi$ touches $\psi$ from above, then $x \in B_{r_0}(x_0)$ and $\nabla \phi(x)=\nabla\psi(x)\neq 0$. So,
since $\psi(x)+\delta_1<U(x)$ for all $x\in B_r(x_0)\setminus B_{r_0}(x_0)$ and $r-r_0 \geq r_0$, we get
\begin{align}
\I\tilde{w}_\delta(x)&\geq \I\hat{U}(x)-2\delta\int_{r-r_0}^\infty\frac{1}{\eta^{1+2s}}\,ds \geq  \rho-2\delta\int_{r_0}^\infty\frac{1}{\eta^{1+2s}}\,d\eta =
\rho - \frac{\delta r_0^{2s}}{s}.\notag
\end{align}
Hence it suffices to choose $\delta_0=s\rho r_0^{2s}$ to deduce that 
$\I\tilde{w}_\delta\geq 0$ and find the desired contradiction.
\end{proof}

\begin{remark}{\rm
Using Corollary \ref{cor:improved holder} we obtained that  $U \in C_{0,{\rm loc}}^{0,2s-1}(\Omega)$.
However, one can actually show that $U \in C_{0}^{0,2s-1}(\R^N)$. To prove this, one uses a blow-up argument as in the proof of Corollary \ref{cor:improved holder}
together with the fact that $U$ grows at most like $d^s(\cdot,\partial \Omega) \ll d^{2s-1}(\cdot,\partial \Omega)$ near $\partial\Omega$
(see Remark \ref{rmk:growth}) to show that any blow-up profile $u_0$
solves $\I u_0=0$ inside some infinite domain $\tilde \Omega \subset \R^N$, and vanishes outside. Then, arguing as the proof of Lemma \ref{impreg1} one obtains $u_0=0$.
We leave the details to the interested reader.
}
\end{remark}

We finally establish uniqueness of solutions by proving a general comparison principle which does not rely on compactness of $\Omega$
but uses the stability of $\I$ when the limit function cannot be touched by a test function with zero derivative.
\begin{theorem}\label{monocompthrm}
Let $\Omega$ be bounded in the $e_1$ direction (i.e., $\Omega\subset \{-M \leq x_1 \leq M\}$ for some $M>0$). Consider two functions $u,w:\mathbb{R}^N\rightarrow \mathbb{R}$ such that
\begin{itemize}
\item $\I u\geq 0$ and $\I w\leq 0$ ``at non-zero gradient points'' inside $\Omega$ (in the sense of Definition \ref{subsupdefn}),
\item $u\leq w$ on $\mathbb{R}^N\setminus \Omega$,
\item $u,w \in C^{0,2s-1}(\R^N)$,
\item $u$ or $w$ is uniformly monotone along $e_1$ away from $\partial\Omega$ (see Lemma \ref{unifmonotone}) for some $\alpha<2$.
\end{itemize}
Then $u\leq w$ in $\Omega$.
\end{theorem}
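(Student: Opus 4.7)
The strategy is by contradiction along the lines of Theorem~\ref{comparison}, handling the non-compactness of $\Omega$ via transverse translations (to trap a maximising sequence at a finite point) and the quantitative use of uniform monotonicity with $\alpha<2$ (to ensure the common gradient at the contact point has strictly positive $e_1$-component, so that the contradicting line exits the strip $\{|x_1|\leq M\}$ containing $\Omega$).

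Assume $\sigma:=\sup_\Omega(u-w)>0$, and pick a maximising sequence $x_n\in\Omega$. Uniform H\"older continuity of $u,w$ together with $u\leq w$ on $\R^N\setminus\Omega$ forces $d(x_n,\partial\Omega)\geq c>0$ (else $(u-w)(x_n)$ would tend to a non-positive value). Writing $x_n=a_ne_1+b_n$ with $b_n\perp e_1$, the $a_n$ are bounded since $\Omega\subset\{|x_1|\leq M\}$. Translate: $U_n(x):=u(x+b_n)$ and $W_n(x):=w(x+b_n)$ are uniformly H\"older, uniformly bounded, solve the analogous sub/super problem in $\Omega_n:=\Omega-b_n$, $W_n$ remains uniformly monotone along $e_1$, and $U_n\leq W_n$ on $\R^N\setminus\Omega_n$. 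By Arzel\`a--Ascoli and the stability Theorem~\ref{stability}, extract locally uniform limits $U_0,W_0$, still subsolutions/supersolutions at non-zero gradient points in any open set eventually contained in the $\Omega_n$, with $W_0$ uniformly monotone. Letting $\bar a:=\lim a_n$ and $\bar x:=\bar a\,e_1$, one has $(U_0-W_0)(\bar x)=\sigma$, $\bar x$ at distance $\geq c$ from $\partial\Omega_n$ for $n$ large, and $U_0(y)\leq W_0(y)$ whenever $|y_1|>M$ (such $y$ lie in $\R^N\setminus\Omega_n$ for every $n$).

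Next regularise the limits via sup/inf convolutions $U_0^\epsilon,(W_0)_\epsilon$ (Lemma~\ref{infsupconvlemma}): they converge uniformly on $\R^N$, are $C^{1,1}$ from below/above at every point, remain sub/supersolutions at non-zero gradient points near $\bar x$, and $(W_0)_\epsilon$ inherits uniform monotonicity along $e_1$ (with slightly reduced constants). A standard Perron/Ishii attainment argument (e.g.\ adding a small penalty of the form $-\lambda|x-\bar x|^2$ and then letting $\lambda,\epsilon\to 0$) produces a point $\bar x^\epsilon\to\bar x$ and a value $\sigma^\epsilon\to\sigma$ such that $(W_0)_\epsilon+\sigma^\epsilon$ touches $U_0^\epsilon$ from above at $\bar x^\epsilon$; both functions are therefore classically $C^{1,1}$ at $\bar x^\epsilon$ with common gradient $p$. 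Lemma~\ref{notouchlem} applied with the uniform monotonicity of $(W_0)_\epsilon$ yields $p\neq 0$; more strongly, matching the $C^{1,1}$ expansion $(W_0)_\epsilon(\bar x^\epsilon+he_1)=(W_0)_\epsilon(\bar x^\epsilon)+h\,p\cdot e_1+O(h^2)$ with the monotonicity lower bound $\beta h^\alpha$ and using $\alpha<2$ forces $p\cdot e_1>0$ strictly, since the $O(h^2)$ term cannot absorb $\beta h^\alpha$ as $h\to 0^+$.

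With $v:=p/|p|$ (so $v\cdot e_1>0$), Case~I in the proof of Theorem~\ref{comparison} applies verbatim and forces $U_0^\epsilon\equiv(W_0)_\epsilon+\sigma^\epsilon$ along the whole line $\ell:=\{\bar x^\epsilon+tv:t\in\R\}$. Choosing $T$ large enough that $y:=\bar x^\epsilon+Tv$ satisfies $|y_1|>M$ gives $y\notin\Omega_n$ for all $n$, hence $U_0(y)\leq W_0(y)$; but along $\ell$ one has $(U_0^\epsilon-(W_0)_\epsilon)(y)=\sigma^\epsilon\to\sigma>0$, contradicting the uniform convergence $(U_0^\epsilon-(W_0)_\epsilon)(y)\to (U_0-W_0)(y)\leq 0$. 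The main obstacle is exactly the strict positivity $p\cdot e_1>0$: without it, the contradicting line $\ell$ could remain inside $\{|x_1|\leq M\}$ and no contradiction would arise. It is precisely here that the hypothesis $\alpha<2$ is essential, as only then does the $O(h^2)$ error in the $C^{1,1}$ expansion fail to absorb the $\beta h^\alpha$ gain of the uniform monotonicity.
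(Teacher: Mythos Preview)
Your overall strategy coincides with the paper's: argue by contradiction, use sup/inf convolutions for $C^{1,1}$-from-below/above structure, translate transversally to $e_1$ to trap a maximising sequence on a bounded segment, pass to locally uniform limits via Arzel\`a--Ascoli and Theorem~\ref{stability}, and run the Case~I computation of Theorem~\ref{comparison} at the resulting contact point. Your explicit verification that the common gradient satisfies $p\cdot e_1>0$ (not merely $p\neq 0$) is a genuine clarification of something the paper leaves implicit; without it the line $\ell$ need not leave the strip $\{|x_1|\leq M\}$ and no contradiction would follow.

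However, your order of operations differs from the paper's in a way that creates a gap. The paper regularises \emph{first} and translates \emph{afterwards}: the semiconvexity/semiconcavity of $u^\epsilon,w_\epsilon$ (paraboloids of fixed opening $2/\epsilon$ touching from below/above) is preserved under translation and under locally uniform limits, so the limit functions $u_0,w_0$ are themselves $C^{1,1}$ from below/above at every point, and the limit contact point $x_0$ is immediately usable with equal gradients. You instead translate and pass to the limit first, obtaining merely H\"older functions $U_0,W_0$, and regularise afterwards. The difficulty is that $\sup\bigl(U_0^\epsilon-(W_0)_\epsilon\bigr)$ need not be attained on the non-compact strip; your proposed remedy, the penalty $-\lambda|x-\bar x|^2$, produces a maximum at some $\bar x^{\epsilon,\lambda}$, but there the gradients of $U_0^\epsilon$ and $(W_0)_\epsilon$ differ by $2\lambda(\bar x^{\epsilon,\lambda}-\bar x)$, which is generally nonzero. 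Since the Case~I subtraction requires both functions to be evaluated along the \emph{same} direction $v$, this mismatch blocks the argument as written. Moreover, for fixed $\epsilon$ one has $\lambda|\bar x^{\epsilon,\lambda}-\bar x|^2\leq \sigma^\epsilon-\sigma>0$, so $\bar x^{\epsilon,\lambda}$ may escape to infinity as $\lambda\to 0$, and you cannot simply send $\lambda\to 0$ first.

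The cleanest fix is to adopt the paper's order: regularise $u,w$ first (noting that translation and sup/inf-convolution commute, so this is essentially a reshuffling of your steps), then translate and pass to the limit. The $C^{1,1}$-from-below/above structure then survives the limit, the contact point exists with a common gradient, and your argument that $p\cdot e_1>0$ together with Case~I closes the proof.
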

\begin{proof}
By way of contradiction, we assume there is a point $x\in \Omega$ such that $u(x)> w(x)$.
Replacing $u$ and $w$ by $u^\epsilon$ and $w_\epsilon$, 
we have  $u^\epsilon(x)-w_\epsilon(x) \geq c>0$ for $\epsilon$ sufficiently small.
Moreover, since $u,w \in C^{0,2s-1}(\R^N)$, the uniform continuity of $u$ and $w$, together with the assumption $u\leq w$ on $\mathbb{R}^N\setminus \Omega$, implies that
$(u^\epsilon-w_\epsilon)\vee 0 \to 0$ as $\epsilon \to 0$ uniformly on $\R^N\setminus \Omega$ (see Lemma \ref{infsupconvlemma}).
So, assume that $\epsilon$ is small enough that $u^\epsilon-w_\epsilon \leq c/2$ on $\R^N\setminus \Omega$, and define
\begin{align*}
\delta_0&=\inf\{\delta: w_\epsilon(x)+\delta\geq u^\epsilon(x) \ \ \forall \,x\in \mathbb{R}^N\}\\
&=\inf\{\delta: w_\epsilon(x)+\delta\geq u^\epsilon(x) \ \ \forall \,x\in \Omega\}.
\end{align*}
Observe that $\delta_0\geq c>0$.

If there is a point $x_0\in \Omega$ such that $w_\epsilon(x_0)+\delta_0=u^\epsilon(x_0)$ we can proceed as in the proof of the comparison principle for compact sets
(Theorem \ref{comparison}) to obtain a contradiction.
If no such $x_0$ exists, let $x_n\in \Omega$ be a sequence such that $w_\epsilon(x_n)+\delta_0\leq u^\epsilon(x_n)+\frac{1}{n}$.
Let us observe that since  $u\leq w$ in $\mathbb{R}^N\setminus \Omega$, $u,w \in C^{0,2s-1}(\R^N)$, and $u^\epsilon -u,w_\epsilon -w$ are uniformly close to zero,
the points $x_n$ stay at a uniform positive distance from $\partial\Omega$ for $\epsilon$ sufficiently small .
Let $\tau_n\in \{0\}\times\mathbb{R}^{N-1}$ be such that $\tilde{x}_n:=x_n-\tau_n=(e_1\cdot x_n)e_1$, that is $\tau_n$
is the translation for which $\tilde{x}_n\in \mathbb{R}\times \{(0,...,0)\}$.
The sequence $\tilde{x}_n$ lies in a bounded set of $\mathbb{R}^N$ since $\Omega$ is bounded in the $e_1$ direction,
so we may extract a subsequence with a limit $x_0$.
Being (sub/super)solution invariant under translations, $u_n(x)=u^\epsilon(x-\tau_n)$ and $w_n(x)=w_\epsilon(x-\tau_n)$
form a family of ``sub and supersolutions at non-zero gradient points'' which are uniformly equicontinuous and bounded.
Using the Arzel\`a-Ascoli theorem, up to a subsequence we can find two functions $u_0(x)$ and $w_0(x)$ such that $u_n(x)\rightarrow u_0(x)$ and $w_n(x)\rightarrow w_0(x)$
uniformly on compact sets. By the stability Theorem \ref{stability}, $u_0$ [resp. $w_0$] is a ``subsolution [resp. supersolution] at non-zero gradient points''.
Moreover $u_0(x)\leq w_0(x)+\delta_0$ for all $x\in\mathbb{R}^N$, $u_0(x_0)=w_0(x_0)+\delta_0$, and 
\begin{align}
w_0(x)-u_0(x)\geq \delta_0 \qquad \text{on }\mathbb{R}^N\setminus \Omega.\label{wgubdry}
\end{align}
Furthermore, the uniform $C^{1,1}$ bounds from below [resp. above] on $u^\epsilon$ [resp. $w_\epsilon$] (see Lemma \ref{infsupconvlemma})
implies that also $u_0$ [resp. $w_0$] is $C^{1,1}$ from below [resp. above].
Finally, if we assume for instance that $u$ is uniformly monotone along $e_1$ away from $\partial\Omega$ (see Lemma \ref{unifmonotone}) for some $\alpha<2$, then also
$u_0$ is uniformly monotone along $e_1$ away from $\partial\Omega$ for the same value of $\alpha$.

Now, to find the desired contradiction, we can argue as in the proof of Theorem \ref{comparison}: at the point $x_0$ both functions are $C^{1,1}$ at $x_0$,
so Lemma \ref{notouchlem} applied with $u=u_0$ and $\phi=w_0$ implies $\nabla w_0(x_0)\neq 0$,
and the sub and supersolution conditions at $x_0$ give a contradiction. This concludes the proof.
\end{proof}

\section{A Monotone Obstacle Problem}\label{obstacle}
In this section we consider the problem of finding a solution $u:\mathbb{R}^N\rightarrow\mathbb{R}$ of the following dual obstacle problem:
\begin{align}
\left\{\begin{array}{cclcl}
\I u(x)&\geq&0 &\text{if}  &\Gamma^-(x)<u(x),\\
\I u(x)&\leq&0 &\text{if}	  &\Gamma^+(x)>u(x),\\
u(x)&\geq&\Gamma^-(x) &\text{for all}  &x\in \mathbb{R}^N,\\
u(x)&\leq&\Gamma^+(x) &\text{for all}  &x\in \mathbb{R}^N.
\end{array}\right.\label{obstaclePDE}
\end{align}
Here $\Gamma^+$ and $\Gamma^-$ are upper and lower obstacles which confine the solution and we interpret the above definition in the viscosity sense.
The model case one should have in mind is $\Gamma^+=[(x_1)_-]^{-\gamma_1}\wedge 1$ and $\Gamma^-=(1- [(x_1)_+]^{-\gamma_2}) \vee 0$, $\gamma_1,\gamma_2 >0$.
(Here and in the sequel, $x_1$ denotes the component of $x$ in the $e_1$ direction.)

\begin{figure}
\centerline{\epsfysize=2truein\epsfbox{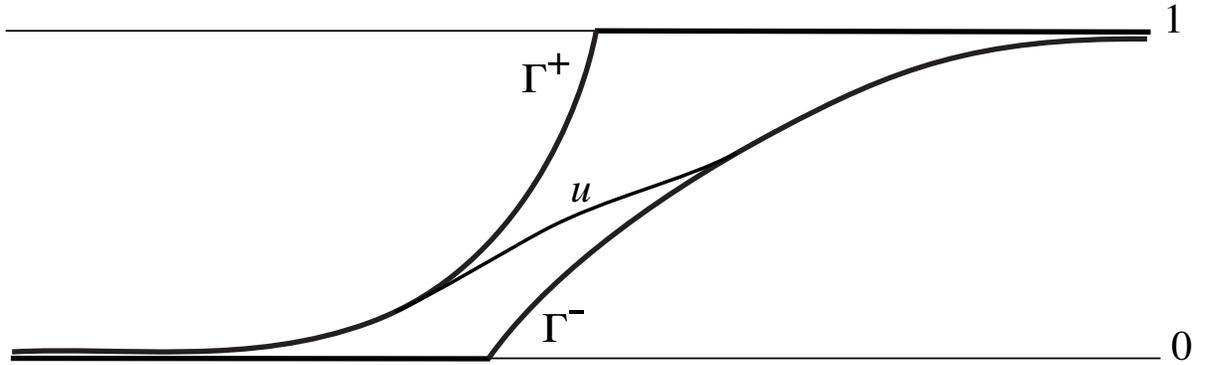}}
\caption{The dual obstacle problem.}
\label{fig7}
\end{figure}

When $u$ does not coincide with $\Gamma^-$ we require it to be a subsolution, and likewise it must be a supersolution when it does not coincide with $\Gamma^+$.
In particular, when it does not coincide with either obstacle it must satisfy $\I u(x)=0$.

We make the following assumptions:
\begin{itemize}
\item $ 0\leq \Gamma^- <\Gamma^+\leq 1$. 
\item $\Gamma^+,\Gamma^-$ are uniformly Lipschitz on $\R^N$ (and we denote by $L_0$ be a Lipschitz constant for both $\Gamma^+$ and $\Gamma^-$).
\item $\Gamma^+$ [resp. $\Gamma^-$] is uniformly $C^{1,1}$ inside the set $\{ \Gamma^+ < 1\}$ [resp. $\{\Gamma^->0\}$].
\item $\Gamma^+(x)\rightarrow 0$ uniformly as $x_1\rightarrow -\infty$ and $\Gamma^-(x)\rightarrow 1$ uniformly as $x_1\rightarrow\infty$. More precisely,
\begin{align}
\lim_{x_1\rightarrow -\infty} \sup_{\bar{x}\in\mathbb{R}^{N-1}} \Gamma^+(x_1,\bar{x}) =0, \label{limasspos}\\
\lim_{x_1\rightarrow +\infty} \sup_{\bar{x}\in\mathbb{R}^{N-1}} \Gamma^-(x_1,\bar{x}) =1. \label{limassneg}
\end{align}
\item $\Gamma^+,\Gamma^-$ are monotone in all directions for some cone of monotonicity $C^+$ with $\theta\in (0,\pi/2)$ (see \eqref{monocone}).
Moreover, the monotonicity is strict away from $0$ and $1$:
for every $M>0$ there exists $l_M>0$ such that
\begin{equation}
\label{eq:bound from below}
\begin{split}
 l_M \leq& \frac{|\Gamma^+(x)-\Gamma^+(y)|}{|x-y|} \qquad \forall\,x,y \in \{ \Gamma^+ < 1\}\cap \{|x_1|\leq M\},\, x-y \in C^+;\\
l_M \leq& \frac{|\Gamma^-(x)-\Gamma^-(y)|}{|x-y|} \qquad \forall\,x,y \in \{ \Gamma^- > 0\}\cap \{|x_1|\leq M\}\, x-y \in C^+.
\end{split}
\end{equation}
Again, we write $C_\theta=\cos(\pi/2-\theta)$. 
\item For each $M>0$ there is a constant $L_M$ such that if $x,y\in \{x:M<|x_1|<2M\}$ then
\begin{align}
\frac{|\Gamma^+(x)-\Gamma^+(y)|}{|x-y|}+\frac{|\Gamma^-(x)-\Gamma^-(y)|}{|x-y|} \leq L_M,\label{lipasspos}
\end{align}
and $L_M\rightarrow 0$ as $M\rightarrow \infty$.
\item There exist $\alpha>2s$ and global constants $M_0>0$, $\rho_0\in (0,1)$ with the following property:  For every $\tilde{x}$ with $\tilde{x}_1>M_0$ there exists $A>0$
(which may depend on $\tilde x$) satisfying 
\begin{align}
C|\tilde{x}_1|^{-\alpha}&\geq A^{s}\left(1-\Gamma^-(\tilde{x})+\frac{|\nabla\Gamma^-(\tilde{x})|^2}{4A}\right)^{1-s},\label{parabassnega}\\
\rho_0&> \left(1-\Gamma^-(\tilde{x})+\frac{|\nabla\Gamma^-(\tilde{x})|^2}{4A}\right),\label{parabassnegb}
\end{align}
such that $\Gamma^-$ can be touched from above by the paraboloid
\begin{align}
p_{\tilde{x}}^-(x)=\Gamma^-(\tilde{x})+\nabla\Gamma^-(\tilde{x})\cdot(x-\tilde{x})+A(x-\tilde{x})^2.\label{parabneg}
\end{align}
For every $\tilde{x}$ with $\tilde{x}_1>M$ there exists $A>0$ satisfying 
\begin{align}
C|\tilde{x}_1|^{-\alpha}&\geq A^{s}\left(\Gamma^+(\tilde{x})+\frac{|\nabla\Gamma^+(\tilde{x})|^2}{4A}\right)^{1-s},\label{parabassposa}\\
\rho_0&> \left(\Gamma^+(\tilde{x})+\frac{|\nabla\Gamma^+(\tilde{x})|^2}{4A}\right),\label{parabassposb}
\end{align}
such that $\Gamma^+$ can be touched from below by the paraboloid
\begin{align}
p_{\tilde{x}}^+(x)=\Gamma^+(\tilde{x})+\Gamma^+(\tilde{x})\cdot(x-\tilde{x})-A(x-\tilde{x})^2.\label{parabpos}
\end{align}
\end{itemize}
Assumption \eqref{eq:bound from below} is used to establish uniform monotonicity of the solutions.
Assumptions (\ref{parabassnega})-(\ref{parabpos}) control the asymptotic behavior of the obstacles and
guarantee the solution coincides with the obstacle in some neighborhood of infinity.
It is important to note that a different $A$ maybe chosen for each $\tilde x$.  These assumptions are realized in the two following general situations:

\begin{itemize}
\item (Polynomial Control of the Obstacles) Let $\gamma_1,\gamma_2>0$, and $\Gamma^+,\Gamma^-$ be such that the following holds: There are global constants $C,M>0$ such that
\begin{itemize}
\item[i.] $\Gamma^+(x)\leq C|x_1|^{-\gamma_1}$ when $x_1<-M$, and $1-\Gamma^-(x)\leq C|x_1|^{-\gamma_2}$ when $x_1>M$.
\item[ii.] $\frac{1}{C}|x_1+1|^{-\gamma_1-1}\leq |\nabla \Gamma^+(x)|\leq C|x_1+1|^{-\gamma_1-1}$  for all $x\in\mathbb{R}^N$.
\item[iii.] $\frac{1}{C}|x_1+1|^{-\gamma_2-1}\leq |\nabla \Gamma^-(x)|\leq C|x_1+1|^{-\gamma_2-1}$  for all $x\in\mathbb{R}^N$.
\item[iv.] The paraboloids (\ref{parabneg}) and (\ref{parabpos}) satisfy  $\frac{1}{C}|x_1|^{-\gamma_1-2}\leq A \leq C|x_1|^{-\gamma_1-2}$ if $x_1<-M$
and   $\frac{1}{C}|x_1|^{-\gamma_2-2}\leq A \leq C|x_1|^{-\gamma_2-2}$ if $x_1>M$, respectively.
\end{itemize}
Then $\Gamma^+$ and $\Gamma^-$ satisfy \eqref{eq:bound from below}-\eqref{parabpos}.
\item If we choose $A\sim |\nabla\Gamma^+|^2$ for (\ref{parabpos}) and $A\sim |\nabla\Gamma^-|^2$ for (\ref{parabneg}) then 
\begin{equation}
\label{convass}
\begin{split}
&\frac{1}{C}|x_1+1|^{-\gamma_1-1}\leq |\nabla \Gamma^+(x)|\leq C|x_1+1|^{-\gamma_1-1},\\
&\frac{1}{C}|x_1+1|^{-\gamma_2-1}\leq |\nabla \Gamma^-(x)|\leq C|x_1+1|^{-\gamma_2-1},
\end{split}
\end{equation}
implies (\ref{parabassnega})-(\ref{parabassnegb}) and (\ref{parabassposa})-(\ref{parabassposb}) for any $\gamma_1,\gamma_2>0$.
In particular, if $\Gamma^+$ and $\Gamma^-$ are concave and convex respectively in a neighborhood of infinity on the $x_1$ axis, then the paraboloids
(\ref{parabneg}) and (\ref{parabpos}) touch for any $A\geq0$, and (\ref{convass}) is enough to ensure \eqref{eq:bound from below}-\eqref{parabpos}.
\end{itemize}
 
To define the family $\mathcal{F}$ of admissible subsolutions, we say $u\in \mathcal{F}$ if it satisfies
\begin{align}
\left\{\begin{array}{cclcl}
\I u(x)&\geq&0 &\text{if}  &\Gamma^-(x)<u(x),\\
u(x)&\geq&\Gamma^- &\text{for all}  &x\in \mathbb{R}^N,\\
u(x)&\leq&\Gamma^+ &\text{for all}  &x\in \mathbb{R}^N.
\end{array}\right.\label{obstaclefamily}
\end{align}
This set in non-empty because it contains $\Gamma^-(x)$.
Again we will use Perron's method to show the supremum of functions in this set solves the problem (\ref{obstaclePDE}).
Our solution candidate is:
\begin{align}
U(x)=\sup\{u(x): u\in\mathcal{F}\}.\notag
\end{align}

We prove the following existence and uniqueness result:
\begin{theorem}\label{obstaclePDEthrm}
Let $\Gamma^+$ and $\Gamma^-$ satisfy the stated assumptions, then $U(x)$ is the unique solution to the problem (\ref{obstaclePDE}).  
\end{theorem}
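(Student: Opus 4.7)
The plan is to adapt the Perron-method argument of Theorem~\ref{PDEmthrm}. Since $\Gamma^-\in\mathcal F$ the family is non-empty, it is closed under pairwise maxima by Lemma~\ref{maxsubissub}, and every element is squeezed between the uniformly Lipschitz obstacles. A uniform H\"older bound on elements of $\mathcal F$ then follows from Theorem~\ref{holdcontthrm}, so a standard upper-semicontinuous envelope argument produces a continuous representative of $U$ in $\mathcal F$: in particular $\Gamma^-\leq U\leq \Gamma^+$ and $\I U\geq 0$ in the viscosity sense wherever $U>\Gamma^-$.

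The central step is to show that $U$ coincides with the obstacles near infinity, i.e., there exists $M_1>0$ with $U=\Gamma^-$ on $\{x_1\geq M_1\}$ and $U=\Gamma^+$ on $\{x_1\leq -M_1\}$. For the first, fix $\tilde x$ with $\tilde x_1$ large and consider the capped paraboloid $V_{\tilde x}=p^-_{\tilde x}\wedge 1$, with $p^-_{\tilde x}$ as in \eqref{parabneg}. By hypothesis $V_{\tilde x}\geq \Gamma^-$ everywhere and $V_{\tilde x}(\tilde x)=\Gamma^-(\tilde x)$. I would compute $\I V_{\tilde x}(\tilde x)$ in the spirit of Lemma~\ref{growth2}: the convex contribution near $\tilde x$ is of order $A^s(1-\Gamma^-(\tilde x)+|\nabla\Gamma^-(\tilde x)|^2/(4A))^{1-s}$, bounded above by $C|\tilde x_1|^{-\alpha}\to 0$ thanks to \eqref{parabassnega}, while the negative contribution from the capped region $\{p^-_{\tilde x}>1\}$ is bounded away from zero by \eqref{parabassnegb}. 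Hence for $\tilde x_1$ large enough $V_{\tilde x}$ is a classical supersolution on a ball around $\tilde x$, and since $V_{\tilde x}\equiv 1\geq u$ outside that ball for every $u\in\mathcal F$, Theorem~\ref{comparison} forces $u(\tilde x)\leq V_{\tilde x}(\tilde x)=\Gamma^-(\tilde x)$, whence $U=\Gamma^-$ at $\tilde x$. The symmetric argument with $p^+_{\tilde x}\vee 0$ treats $\{x_1\leq -M_1\}$. \textbf{This is the main technical obstacle}: assumptions \eqref{parabassnega}--\eqref{parabpos} are calibrated precisely to make this gain/loss accounting close up.

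Granted coincidence at infinity, monotonicity of $U$ in the cone $C^+$ follows from the translation argument of Proposition~\ref{prop:monotone} (the obstacles are themselves monotone in $C^+$, so translates of admissible subsolutions remain admissible after being re-floored by $\Gamma^-$ and re-capped by $\Gamma^+$). Strict uniform monotonicity of $U$ along $e_1$ then comes from combining the lower bound~\eqref{eq:bound from below} on the slopes of $\Gamma^\pm$ (inherited by $U$ on the two coincidence sets) with a barrier propagation through the slab $\{|x_1|\leq M_1\}$ analogous to Lemma~\ref{unifmonotone}. Lemma~\ref{notouchlem} then forces every test function touching $U$ to have non-zero gradient, and the bump construction of Theorem~\ref{monoextthrm} together with the stability Theorem~\ref{stability} upgrades $U$ to a supersolution at every point where $U<\Gamma^+$. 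Thus $U$ solves \eqref{obstaclePDE}.

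For uniqueness, the same paraboloid argument applies to any solution, so two solutions agree outside the slab $\{|x_1|\leq M_1\}$. Inside the slab one employs sup/inf-convolution regularization followed by translation and Arzel\`a--Ascoli extraction exactly as in Theorem~\ref{monocompthrm}; strict uniform monotonicity of one of the solutions rules out a zero-gradient contact point in the limit, and the sub/supersolution conditions at a non-zero-gradient contact point yield the desired contradiction.
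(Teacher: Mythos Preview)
Your overall strategy aligns with the paper's, but the barrier you propose for coincidence with the obstacles is flawed. The capped paraboloid $V_{\tilde x}=p^-_{\tilde x}\wedge 1$ is \emph{not} a supersolution: on the capped region $\{p^-_{\tilde x}>1\}$ one has $V_{\tilde x}=1>\Gamma^-(\tilde x)=V_{\tilde x}(\tilde x)$, so the second difference there equals $2(1-\Gamma^-(\tilde x))>0$; combined with the positive convex contribution near $\tilde x$, there is no negative term of fixed size in $\I V_{\tilde x}(\tilde x)$, and your appeal to \eqref{parabassnegb} does not produce one. The paper's barrier (Lemma~\ref{obsbarriers}) instead takes $P^-=P^-_{\mathcal S}\wedge\Gamma^+$: the crucial $\wedge\,\Gamma^+$ forces the barrier down to nearly $0$ in $\{x_1\ll 0\}$ (since $\Gamma^+\to 0$ there), and this furnishes a negative contribution of order $(|\tilde x_1|+M)^{-2s}$ which beats the positive $O(|\tilde x_1|^{-\alpha})$ term precisely because $\alpha>2s$. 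Moreover, to pick up this negative mass the line of integration must actually reach the region where $\Gamma^+$ is small; this is why the paper sweeps the paraboloid over the cone $C^-$ (the inner infimum in the definition of $P^-_{\mathcal S}$), guaranteeing that the gradient of the barrier stays in $C^-$ throughout the bowl. Your single paraboloid lacks this feature, and in any case you would need the supersolution inequality on the entire set $\{V_{\tilde x}<1\}$, not just at $\tilde x$, before invoking Theorem~\ref{comparison}.

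A secondary point: claiming $U\in\mathcal F$ via a ``standard upper-semicontinuous envelope argument'' before establishing uniform monotonicity is premature, since stability of the subsolution condition under uniform limits (Theorem~\ref{stability}) is only available at non-zero-gradient test points. The paper first proves coincidence with the obstacles and uniform monotonicity (Lemma~\ref{obsbarriers} through Lemma~\ref{unifmonotone2}), then invokes Lemma~\ref{notouchlem} to rule out zero-gradient contacts, and only then concludes $U\in\mathcal F$. Your later steps supply exactly this, so the issue is one of ordering rather than a fatal gap.
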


Arguing the existence and uniqueness given by this theorem is similar to the work in the previous section.
We will build barriers from paraboloids which will show $U$ coincides with the obstacle near infinity along the $e_1$ axis.
From here, we will make use of the structure of the obstacles to show $U$ is uniformly monotone in the sense of Definition \ref{unifmonotonedefn}
so that it may only be touched by test functions with non-zero derivatives (actually, we will show $U$ has locally a uniform linear growth).
Again, this implies stability, allowing us show that $U$ is the (unique) solution.

We also show the solution is Lipschitz, and demonstrate that $U$ approaches the obstacle in a $C^{1,s}$ fashion along the direction of the gradient.
This is the content of the following theorem. Recall that  $L_0$ denotes a Lipschitz constant for both $\Gamma^+$ and $\Gamma^-$. 
\begin{theorem}\label{obstacleregthrm}
There exists a constant $A_\theta\geq 1$, depending only on the opening of the cone $C^+$, such that
$U$ is $(A_\theta L_0)$-Lipschitz. 
Furthermore, $U$ approaches the obstacle in a $C^{1,s-1/2}$ fashion along the direction of the gradient:
If $x$ is such that $\Gamma^+(x)=U(x)$ and $y\in S^{N-1}$ is the direction of $\nabla \Gamma^+(x)$, then
\begin{align}
\Gamma^+(x+r y)-U(x+r y) =O(r^{1+(s-1/2)})\notag
\end{align}
as $r\rightarrow 0$. (Here, the right hand side $O(r^{1+(s-1/2)})$ is uniform with respect to the point $x$.)   Similarly,
if $x$ is such that $\Gamma^-(x)=U(x)$ and $y\in S^{N-1}$ is the direction of $\nabla \Gamma^-(x)$, then
\begin{align}
U(x-r y)-\Gamma^-(x-r y) =O(r^{1+(s-1/2)})\notag
\end{align}
as $r\rightarrow 0$.\end{theorem}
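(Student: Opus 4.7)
The theorem splits into two independent assertions: the Lipschitz bound on $U$ and the $C^{1,s-1/2}$ detachment from the obstacles at contact points. Both rely on the Perron characterization $U=\sup\mathcal{F}$ combined with the $L_0$-Lipschitz regularity of $\Gamma^\pm$.

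\textbf{Lipschitz regularity.} My plan is to use translates of $U$ as competitors. Fix $\tau\in\R^N$ and set
\[
V(x):=U(x)\vee\bigl(U(x-\tau)-L_0|\tau|\bigr).
\]
I will check $V\in\mathcal{F}$. The bound $V\leq\Gamma^+$ follows from $U(x-\tau)-L_0|\tau|\leq\Gamma^+(x-\tau)-L_0|\tau|\leq\Gamma^+(x)$, and $V\geq U\geq\Gamma^-$ is immediate. At any point $x$ with $V(x)>\Gamma^-(x)$, whichever of $U(\cdot)$ or $U(\cdot-\tau)-L_0|\tau|$ realizes the max is a subsolution at $x$: in the second case, $U(x-\tau)-L_0|\tau|>\Gamma^-(x)\geq\Gamma^-(x-\tau)-L_0|\tau|$ implies $U(x-\tau)>\Gamma^-(x-\tau)$, and translation-invariance of $\I$ makes $U(\cdot-\tau)$ a subsolution at $x$. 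Lemma \ref{maxsubissub} then yields $\I V\geq 0$ at $x$, so $V\in\mathcal{F}$ and hence $V\leq U$ by maximality. Applying the resulting inequality $U(x-\tau)\leq U(x)+L_0|\tau|$ with $\pm\tau$ proves that $U$ is $L_0$-Lipschitz; the statement therefore holds with any $A_\theta\geq 1$.

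\textbf{Detachment estimate at upper contact.} Let $x_0$ satisfy $U(x_0)=\Gamma^+(x_0)$. The case $\Gamma^+(x_0)=1$ forces $U\equiv 1$ locally and is trivial; otherwise $\Gamma^+(x_0)<1$ and by hypothesis $\Gamma^+$ is uniformly $C^{1,1}$ near $x_0$. Let $y=\nabla\Gamma^+(x_0)/|\nabla\Gamma^+(x_0)|$. My plan is to construct, for each small $r>0$, an admissible $\phi_r\in\mathcal{F}$ satisfying
\[
\phi_r(x_0+ry)\geq\Gamma^+(x_0+ry)-Cr^{s+1/2}
\]
uniformly in $x_0$; the Perron maximality then yields $U(x_0+ry)\geq\phi_r(x_0+ry)$, which is the claim. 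The ansatz for $\phi_r$ combines the affine tangent plane to $\Gamma^+$ at $x_0$ with a compactly supported cut-paraboloid dip of width $\sim r$ and depth $\sim r^{s+1/2}$ placed in a small neighborhood of the segment joining $x_0$ to $x_0+ry$, and is truncated against $\Gamma^\pm$ outside. The subsolution inequality $\I\phi_r\geq 0$ is checked by a one-dimensional computation along $y$: the affine term is $\I$-harmonic by Lemma \ref{lemma:fund solution}, the cut-paraboloid contributes a controlled negative term estimated by Lemma \ref{lemma:I parab}, and the positive gap between the affine tangent and $\Gamma^+$ (of order $|x-x_0|^2$ near $x_0$ and uniformly positive far away, by the uniform $C^{1,1}$ hypothesis) supplies the compensating positive contribution. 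Matching the scales pins down $r^{s+1/2}$ as the sharp detachment at which the balance can be closed with a universal constant. The construction at a lower-obstacle contact point is entirely analogous, with $\Gamma^-$ in place of $\Gamma^+$.

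\textbf{Main obstacle.} The delicate step is the uniform verification of $\I\phi_r\geq 0$ over all contact points $x_0$ and all small $r$. For the operator $\I$ to reduce to a one-dimensional integral along $y$, the gradient of the barrier must remain nonzero and nearly aligned with $y$ throughout the integration region; this is where the strict monotonicity assumption \eqref{eq:bound from below} enters, providing a uniform lower bound $|\nabla\Gamma^+(x_0)|\geq l_M>0$ over the compact portions of $\{\Gamma^+<1\}$ on which the detachment estimate is nontrivial. Combined with the Lipschitz bound from the first part and the stability of $\I$ at nonzero-gradient points (Theorem \ref{stability}), this yields the uniform $C^{1,s-1/2}$ detachment.
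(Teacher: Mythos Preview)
Your Lipschitz argument is correct and in fact cleaner than the paper's. The paper only shows $W(x)=(U(x+y)-L_0|y|)\vee\Gamma^-(x)\in\mathcal{F}$ for $y\in C^+$, obtaining the one-sided bound $U(x+y)\leq U(x)+L_0|y|$ on the cone, and then closes the full Lipschitz estimate via a geometric detour through a point $w$ with $x-w,z-w\in C^+$ and $|x-w|+|z-w|\leq A_\theta|x-z|$. Your observation that the shifted competitor works for \emph{every} $\tau\in\R^N$ (because $\Gamma^-(x)+L_0|\tau|\geq\Gamma^-(x-\tau)$ holds without any cone restriction) bypasses this detour and yields $A_\theta=1$. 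One minor point: your citation of Lemma~\ref{lemma:fund solution} for the harmonicity of affine functions is misplaced (that lemma is about the cusps $|x|^{2s-1}$), but the fact itself is trivial.

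Your detachment argument, however, has a genuine gap. The barrier you sketch cannot work as described: the affine tangent $\ell(x)=\Gamma^+(x_0)+\nabla\Gamma^+(x_0)\cdot(x-x_0)$ does \emph{not} lie below $\Gamma^+$ in general---the uniform $C^{1,1}$ hypothesis gives only $|\Gamma^+(x)-\ell(x)|\leq M|x-x_0|^2$, not a sign. So your claimed ``positive gap between the affine tangent and $\Gamma^+$'' need not exist, and truncating $\ell$ against $\Gamma^+$ from above can only \emph{decrease} the function, destroying any positive contribution. Moreover, with your scales (dip of depth $\sim r^{s+1/2}$ and width $\sim r$) Lemma~\ref{lemma:I parab} gives a negative contribution of order $r^{s+1/2}\cdot r^{-2s}=r^{1/2-s}\to-\infty$, and you have not identified a compensating term that survives the truncation.

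The paper proceeds entirely differently and much more directly: at a contact point $x_0$ the obstacle $\Gamma^+$ is a $C^{1,1}$ test function touching $U$ from above with $\nabla\Gamma^+(x_0)\neq 0$, so the subsolution inequality $\I U(x_0)\geq 0$ can be evaluated classically along $y$. Splitting the integral at $r$ and using the $C^{1,1}$ bound on $\Gamma^+$ together with boundedness of $U$ yields
\[
\int_0^r\frac{\Gamma^+(x_0+\eta y)-U(x_0+\eta y)}{\eta^{1+2s}}\,d\eta\leq C
\]
uniformly in $x_0$ and $r$. Setting $\delta=\Gamma^+(x_0+ry)-U(x_0+ry)$ and using the Lipschitz bound from the first part, the integrand is at least $\delta/2$ on an interval of length $\delta/(2L_0)$ near $r$, which forces $\delta^2\lesssim r^{1+2s}$. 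No barrier construction is needed.
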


\subsection{Barriers and Uniform Monotonicity}
One may show $U$ is monotone in the directions of $C^+$ by arguing similar to Section \ref{monosec}.
Indeed, given $u \in \mathcal F$ and a direction $y \in C^+$, one can easily show that $u(\cdot+y) \vee \Gamma^-$ still belongs to $\mathcal F$.

Our first goal is to demonstrate how the solution coincides with the obstacle in a neighborhood of infinity.
 We begin with a lemma similar in spirit to Lemma \ref{growth1construction}, constructing barriers with paraboloids.\
Recall that $C^-=-C^+$.

\begin{lemma}\label{obsbarriers}
Let $p^+$ and $p^-$ be given by (\ref{parabneg}) and (\ref{parabpos}) respectively.    For any $\mathcal{S}\subset\mathbb{R}^N$ define
\begin{align}
P^+(x)&=P^+_{\mathcal{S}}(x)\vee \Gamma^-(x),\label{obsbarriersup}\\
P^+_{\mathcal{S}}(x)&=\sup_{x_0\in \mathcal{S}}\left(\sup_{x' - x_0\in C^+}\left\{p_{x_0}^+(x')\vee 0\right\}\right),\notag
\end{align}
and
\begin{align}
P^-(x)&=P^-_{\mathcal{S}}(x)\wedge \Gamma^+(x),\label{obsbarriersub}\\
P^-_{\mathcal{S}}(x)&=\inf_{x_0\in \mathcal{S}}\left(\inf_{x'-x_0\in C^-}\left\{p_{x_0}^-(x')\wedge 1\right\}\right).\notag
\end{align}
Then there exists $\tilde{M}>0$ such that if $\mathcal{S}\subset \{x:x_1<-\tilde{M}\}$ then $P^+  \in \mathcal{F}$, and if $\mathcal{S}\subset \{x:x_1>\tilde{M}\}$ then $u\leq P^-$ for any $u\in \mathcal{F}$.
\end{lemma}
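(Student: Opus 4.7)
\emph{Plan.} The overall approach for both parts is the same: show that each capped paraboloid appearing in the sup/inf defining $P^\pm_{\mathcal{S}}$ is a local sub/supersolution with operator defect controlled by \eqref{parabassposa}--\eqref{parabpos}, then promote to the envelope via Lemma \ref{maxsubissub} and conclude by the comparison principle on compact sets. The threshold $\tilde{M}$ is chosen large enough that the paraboloid centers lie in a region where the defect is $o(|x_{0,1}|^{-2s})$, while the ``background'' contribution from $\Gamma^\pm$ (coming from the limits \eqref{limasspos}--\eqref{limassneg}) stays of unit size.

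\emph{Part 1: $P^+ \in \mathcal{F}$.} I would first verify $\Gamma^- \leq P^+ \leq \Gamma^+$. The lower bound is immediate. For the upper bound, use that $p^+_{x_0}$ is a downward-opening paraboloid touching $\Gamma^+$ from below at $x_0$: concavity of $p^+_{x_0}$ together with $\Gamma^+\geq 0$ forces $p^+_{x_0}(x)\vee 0 \leq \Gamma^+(x)$ globally, and monotonicity of $\Gamma^+$ along $C^+$ preserves this under the inner sup. For the subsolution property at a point $x$ with $P^+(x) > \Gamma^-(x)$, apply Lemma \ref{maxsubissub}: it suffices to show each ``building block'' $(p^+_{x_0}(\cdot)\vee 0)\vee \Gamma^-$ has $\I\geq 0$ at such $x$. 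Direct computation splits the operator into a concave-cap part, bounded below by $-C_s A^s r_0^{2-2s}$ (scaling of Lemma \ref{lemma:I parab}, with effective cap radius $r_0 \sim \sqrt{(\Gamma^+(x_0) + |\nabla\Gamma^+(x_0)|^2/(4A))/A}$), which via \eqref{parabassposa} is at least $-C|x_{0,1}|^{-\alpha}$; and a far-field part where $P^+ = \Gamma^-$, which by \eqref{limassneg} contributes at least $c_0 > 0$ in directions inside $C^+$. Since $\alpha > 2s$, taking $\tilde{M}$ sufficiently large makes the defect negligible compared with the far-field gain.

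\emph{Part 2: $u\leq P^-$ for $u\in\mathcal{F}$.} By a mirror argument, $P^-$ is a supersolution at points where $P^- < \Gamma^+$: the convex-cap defect on $p^-_{x_0}\wedge 1$ is controlled by \eqref{parabassnega}, and the tail sees $\Gamma^+\to 0$ (via \eqref{limasspos}) contributing a dominant negative term. The key auxiliary fact is that $P^- = \Gamma^+$ outside some compact set $K$: the capped paraboloids equal $1$ outside a bounded region (since each $p^-_{x_0}$ grows quadratically), while $\Gamma^+\to 0$ at $-\infty$, so eventually $\Gamma^+ \leq P^-_{\mathcal{S}}$ and the wedge with $\Gamma^+$ wins. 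Hence any $u\in\mathcal{F}$ satisfies $u\leq \Gamma^+ = P^-$ on $\R^N\setminus K$. A comparison argument on $K$ (Theorem \ref{comparison}), with the two-case split $\{u\leq \Gamma^-\}$ (where $u\leq \Gamma^-\leq P^-$ is automatic since $p^-_{x_0}\geq \Gamma^-$ and we wedge above $\Gamma^+\geq \Gamma^-$) vs.\ $\{u > \Gamma^-\}$ (where $u$ is a genuine subsolution and $P^-$ a supersolution), then yields $u\leq P^-$ on all of $\R^N$.

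\emph{Main obstacle.} The technical heart is the $\I$-estimate on a capped paraboloid, showing $A^s r_0^{2-2s}\lesssim |x_{0,1}|^{-\alpha}$; this is exactly what \eqref{parabassposa}--\eqref{parabassposb} encode, the constraint $\rho_0 < 1$ ensuring the cap does not bump into the truncation level so that the far-field gain is not spoiled. Balancing this decaying defect against the uniformly positive tail contribution coming from the strict monotonicity of $\Gamma^-$ to $1$ at $+\infty$ (and from $\Gamma^+$ to $0$ at $-\infty$ for $P^-$), uniformly in $x_{0,1}$, is where the assumption $\alpha > 2s$ is essential: it is the precise threshold at which the paraboloid defect becomes integrable against the $|\eta|^{-(1+2s)}$ kernel.
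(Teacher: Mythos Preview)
Your overall architecture matches the paper's: estimate the capped paraboloid via Lemma \ref{lemma:I parab}, balance against a far-field contribution, then invoke Theorem \ref{comparison} on compact sets for $P^-$. But there is a genuine gap in your accounting of the far-field term.

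You claim the tail ``contributes at least $c_0>0$'' uniformly. It does not. At a point $x$ in the support of the paraboloid $p^+_{\tilde x}$ (so $x_1$ is close to $\tilde x_1<-\tilde M$), the ray in any direction $v\in C^+$ must travel a distance of order $|\tilde x_1|/C_\theta$ before reaching the region where $\Gamma^-$ is close to $1$. The resulting positive contribution to $\I$ is therefore only of order $|\tilde x_1|^{-2s}$, not a fixed constant. On the other side, Lemma \ref{lemma:I parab} gives a defect $C_sAr_0^{2-2s}$, and with $r_0^2=A^{-1}\bigl(\Gamma^+(\tilde x)+|\nabla\Gamma^+(\tilde x)|^2/(4A)\bigr)$ this equals $A^s\bigl(\Gamma^+(\tilde x)+|\nabla\Gamma^+(\tilde x)|^2/(4A)\bigr)^{1-s}$, which \eqref{parabassposa} bounds by $C|\tilde x_1|^{-\alpha}$. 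The condition $\alpha>2s$ is needed precisely so that this $|\tilde x_1|^{-\alpha}$ defect is dominated by the $|\tilde x_1|^{-2s}$ gain as $\tilde x_1\to-\infty$; it has nothing to do with integrability against the kernel, contrary to your ``Main obstacle'' paragraph. If the gain were truly a uniform $c_0>0$, any $\alpha>0$ would suffice.

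The role of $\rho_0<1$ is also slightly different from what you wrote. Condition \eqref{parabassposb} says the maximum height of the cap, $Ar_0^2$, is at most $\rho_0$. The paper then fixes $M$ so that $\Gamma^-\geq\frac{1+\rho_0}{2}$ on $\{x_1>M\}$; the positive numerator in the gain term is $\frac{1+\rho_0}{2}-Ar_0^2\geq\frac{1-\rho_0}{2}>0$. In other words, $\rho_0<1$ guarantees a definite gap between the cap height and the far-field level of $\Gamma^-$, so the tail integral really has a positive integrand. Your phrase ``the cap does not bump into the truncation level'' gestures at this but does not capture that the quantitative gap $\frac{1-\rho_0}{2}$ is what multiplies the $|\tilde x_1|^{-2s}$ factor.
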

\begin{proof}
We will first prove $P^+(x)\in\mathcal{F}$ when $\mathcal{S}\subset \{x:x_1<-\tilde{M}\}$. Let $\rho_0\in (0,1)$ be given by the left hand side of (\ref{parabassposb}),
and choose $M$ so that $\Gamma^-(x)\geq \frac{1+\rho_0}{2}$ when $x_1>M$.  The existence of such $M$ is guaranteed by (\ref{limasspos}).
Instead of working directly with $P^+$ we will instead show that
$$
P(x)=P^+_{\mathcal{S}}(x)\vee \frac{(1-\rho_0)}{2}\chi_{\{x_1>M\}}(x)
$$
is a subsolution for any $x$ such that $x_1<M$. This will imply $P^+\in \mathcal{F}$ since $P^+=P$ whenever $P^+$ does note coincide with $\Gamma^-$.

To begin we rewrite 
\begin{align}
p^+(x)=\Gamma^+(\tilde{x})+\frac{|\nabla\Gamma^-(\tilde{x})|^2}{4A}-A\left(x-\tilde{x}-\frac{\nabla\Gamma^-(\tilde{x})}{2A}\right)^2.\notag
\end{align}
From here, we use Lemma \ref{lemma:I parab} with
\begin{align}
r_0^2=r_0(\tilde x)^2=\frac{1}{A}\left(\Gamma^-(\tilde{x})+\frac{|\nabla\Gamma^-(\tilde{x})|^2}{4A}\right),\notag
\end{align}
and arguing as in the proof of Lemma \ref{growth1construction} yields
\begin{align}
\I P(x)\geq \frac{\frac{1+\rho_0}{2}-Ar_0^2}{2s}\left(\frac{C_\theta}{M-\tilde{x}_1}\right)^{2s}-C_s Ar_0^{2-2s}.\notag
\end{align}
By assumption $Ar_0^2 \leq \rho<1$ and $Ar_0^{2-2s}\leq C|\tilde x_1|^{-\alpha}$ for some $\alpha>2s$ (thanks to 
(\ref{parabassposa})). Hence, there exists a large constant $\tilde{M}>0$ such that if $\tilde{x}_1<-\tilde{M}$ then $\I P(x)\geq 0$.
This implies that $P^+\in\mathcal{F}$ if $\mathcal{S}\subset \{x:x_1<-\tilde{M}\}$, as desired.

Similarly one argues $\I P^-(x)\leq 0$ when $\mathcal{S}\subset \{x:x_1>\tilde{M}\}$.

The final statement in the lemma is now an application of the comparison principle on compact sets (Theorem \ref{comparison}): for any point $\bar x \in \{x:x_1>\tilde{M}\}$,
we choose $\mathcal S=\{\bar{x} \}$. Then, there exists a large ball $B_R(\bar x)$ such that $\Gamma^+ \leq P^-$ outside this ball. 
Since any function $u \in \mathcal F$ lies below $\Gamma^+$, we conclude using Theorem \ref{comparison}.
\end{proof}

\begin{corollary}
\label{cor:touch barriers}
Let $\tilde{M}>0$ be given by the previous lemma. Then $U(x)=\Gamma^-(x)$ when $x_1>\tilde{M}$, and $U(x)=\Gamma^+(x)$ when $x_1<-\tilde{M}$.
\end{corollary}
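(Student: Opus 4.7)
The proof is a direct application of Lemma \ref{obsbarriers}, exploiting the fact that the paraboloids $p^\pm_{x_0}$ from assumptions \eqref{parabneg} and \eqref{parabpos} touch the obstacles exactly at the base point $x_0$.

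First I would handle the region $x_1 < -\tilde{M}$. Fix $\bar{x}$ with $\bar{x}_1 < -\tilde{M}$ and take $\mathcal{S}=\{\bar{x}\}$ in the construction of $P^+$ from Lemma \ref{obsbarriers}. Since $\mathcal{S} \subset \{x_1 < -\tilde{M}\}$, the lemma guarantees that $P^+ \in \mathcal{F}$, whence $U(\bar{x}) \geq P^+(\bar{x})$. To evaluate $P^+(\bar{x})$, observe that $\bar{x}-\bar{x}=0 \in \overline{C^+}$ (or use a limiting argument), so $P^+_{\{\bar{x}\}}(\bar{x}) \geq p^+_{\bar{x}}(\bar{x}) = \Gamma^+(\bar{x})$. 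Combined with the fact that $p^+_{\bar x}$ lies below $\Gamma^+ \leq 1$, we get $P^+_{\{\bar x\}}(\bar x)=\Gamma^+(\bar x)$. Hence $P^+(\bar{x}) = \Gamma^+(\bar{x}) \vee \Gamma^-(\bar{x}) = \Gamma^+(\bar{x})$. Since every $u \in \mathcal F$ satisfies $u \leq \Gamma^+$, we obtain $U(\bar x)\leq \Gamma^+(\bar x)$, and the two inequalities give $U(\bar x) = \Gamma^+(\bar x)$.

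Next I would treat $x_1 > \tilde{M}$ by the dual argument. Fix $\bar{x}$ with $\bar{x}_1 > \tilde{M}$ and take $\mathcal{S}=\{\bar{x}\}$ in the construction of $P^-$. The second part of Lemma \ref{obsbarriers} gives $u \leq P^-$ for every $u \in \mathcal F$, so $U(\bar{x}) \leq P^-(\bar{x})$. At the base point, $P^-_{\{\bar x\}}(\bar{x}) = p^-_{\bar{x}}(\bar{x})\wedge 1 = \Gamma^-(\bar{x})$, and since $\Gamma^- \leq \Gamma^+$ we conclude $P^-(\bar{x}) = \Gamma^-(\bar{x})$. Together with the inclusion $\Gamma^- \leq U$ (which holds by definition of $\mathcal{F}$), this yields $U(\bar{x}) = \Gamma^-(\bar{x})$.

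The only minor subtlety lies in justifying $P^+_{\{\bar x\}}(\bar x) = \Gamma^+(\bar x)$ and the analogous identity for $P^-$: the inner supremum is taken over $x' - x_0 \in C^+$, which is an open cone not containing $0$. However, since $p^+_{\bar x}$ is continuous and $C^+$ contains a sequence converging to $0$, the supremum is attained in the limit $x' \to \bar x$, giving $p^+_{\bar{x}}(\bar{x}) = \Gamma^+(\bar{x})$. (Alternatively, one notices $P^+_{\mathcal{S}}$ is upper semicontinuous and works with a sequence of admissible points converging to $\bar x$.) Once this is in place, no further work is needed: the proof is purely a matching of values at the base point of the paraboloid against the obstacle that is being touched.
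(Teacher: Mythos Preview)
Your proof is correct and follows essentially the same route as the paper: choose $\mathcal{S}=\{\bar x\}$, invoke Lemma~\ref{obsbarriers} to place $P^+$ in $\mathcal F$ (resp.\ to bound every $u\in\mathcal F$ by $P^-$), and evaluate the barrier at the base point to match the obstacle value. Your remark on the open cone $C^+$ not containing $0$ is a fair point that the paper glosses over; the limiting argument you give resolves it cleanly.
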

\begin{proof}
If $\tilde{x}$ is such that $\tilde{x}_1<-\tilde{M}$, choose $\mathcal{S}=\{\tilde{x}\}$ and consider $P^+$ as in the previous lemma.
Since $U$ is the maximal subsolution, it must be greater than or equal to $P^+(x)$, In particular $U(\tilde{x})\geq P^+(\tilde{x})=\Gamma^+(\tilde{x})$,
from which we conclude $U(\tilde{x})=\Gamma^+(\tilde{x})$ (since $U\leq \Gamma^+$).

One argues similarly when $\tilde{x}_1>\tilde{M}$, but using the comparison principle on compact sets as at the end of the proof of the previous lemma.
We leave the details to the reader.
\end{proof}

We will now use the fact that $U$ coincides with the obstacles for large $|x_1|$ to show that $U$ is uniformly monotone (compare with Lemma \ref{unifmonotone}). 
In fact we can do a little better in this situation, and show that the growth is linear.

The strategy of the proof is analogous to the one of Lemma \ref{unifmonotone}: given $u\in \mathcal{F}$ that coincides with the obstacles in a neighborhood of infinity,
we compare it with $u(x-he_1)+\beta h$ for some small $\beta>0$.  When we modify $u(x-he_1)+\beta h$ to take into account the obstacle conditions
there will be a loss coming from changing the function near infinity, and a gain coming from changing the function near the obstacle contact point
(since that point is far from infinity, assumption \eqref{eq:bound from below} implies that the gradient of the obstacle bounded uniformly away from zero).
The goal will be to show that the gain dominates the loss  when $\beta$ is small, so the shifted function is a subsolution as well. 

To estimate the loss, given $h>0$ and $y \in S^{N-1}\cap C^+$, we consider the sets 
\begin{align}
\Sigma^+_{h,y}=\{x: \Gamma^+(x-yh)+\beta h \geq \Gamma^+(x)\},\notag\\
\Sigma^-_{h,y}=\{x: \Gamma^-(x-yh)+\beta h \geq \Gamma^-(x)\}.\notag
\end{align}
We have the following lemma:

\begin{lemma}\label{obssetlem}
For every $\beta>0$ there exist $M_\beta>0$ such that
\begin{align}
\Sigma^+_{h,y}\cap\Sigma^-_{h,y}\subset \left\{x: |x_1|\geq M_\beta\right\} \qquad \forall\,h \in (0,1).\notag
\end{align}
Moreover $M_\beta \to \infty$ as $\beta \to 0$.
\end{lemma}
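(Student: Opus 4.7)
The argument is by contradiction, leveraging the strict monotonicity hypothesis \eqref{eq:bound from below}. Suppose $x\in\Sigma^+_{h,y}\cap\Sigma^-_{h,y}$ and write $z:=x-yh$. Since $y\in C^+\cap S^{N-1}$ and $h\in(0,1)$ one has $yh\in C^+$ with $|yh|=h$, and monotonicity of $\Gamma^\pm$ along $C^+$ turns the defining inequalities of $\Sigma^\pm_{h,y}$ into
\begin{equation*}
0\,\leq\,\Gamma^+(x)-\Gamma^+(z)\,\leq\,\beta h,\qquad 0\,\leq\,\Gamma^-(x)-\Gamma^-(z)\,\leq\,\beta h.
\end{equation*}
Restrict to $|x_1|\leq M$, so that $|z_1|\leq M+1$ since $|y_1|\leq 1$ and $h<1$.

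The heart of the proof is a dichotomy. If $\Gamma^+(x)<1$ then cone-monotonicity propagates this to $\Gamma^+(z)\leq\Gamma^+(x)<1$, so both $x$ and $z$ lie in $\{\Gamma^+<1\}\cap\{|x_1|\leq M+1\}$; applying \eqref{eq:bound from below} to the pair $(x,z)$ (noting $x-z=yh\in C^+$ with $|x-z|=h$) gives $\Gamma^+(x)-\Gamma^+(z)\geq l_{M+1}\,h$, contradicting the displayed estimate as soon as $l_{M+1}>\beta$. A fully symmetric argument, using the $\Gamma^-$ part of \eqref{eq:bound from below}, produces the same contradiction whenever $\Gamma^-(z)>0$ (for then $\Gamma^-(x)\geq\Gamma^-(z)>0$, putting both endpoints in $\{\Gamma^->0\}$). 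Setting
\begin{equation*}
M_\beta\,:=\,\sup\bigl\{M>0\,:\,l_{M+1}>\beta\bigr\},
\end{equation*}
therefore yields $|x_1|\geq M_\beta$ in either favourable regime. Since $l_M>0$ for every fixed $M$, the supremum is strictly positive; since $l_{M+1}>\beta$ holds for arbitrarily large $M$ once $\beta$ is small enough, $M_\beta\to\infty$ as $\beta\to 0$.

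The main obstacle is the residual dead-zone configuration $\Gamma^+(x)=1$ and $\Gamma^-(z)=0$, in which neither strict-monotonicity estimate applies to the pair $(x,z)$ directly because one endpoint lies on the boundary of the relevant strict set. I would dispose of it by a sliding argument along the segment $[z,x]$: continuity of $\Gamma^+$ together with $\Gamma^+(z)\geq 1-\beta h$ forces the level set $\partial\{\Gamma^+<1\}$ to be crossed within an $O(\beta h)$-neighbourhood of $x$, so one may replace $x$ by an interior point $x'\in\{\Gamma^+<1\}$ very close to $x$ and recover the strict-monotonicity lower bound between $z$ and $x'$ up to a controlled loss; a symmetric slide is available near $z$ using $\Gamma^-$. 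The asymptotic paraboloid-tangency hypotheses \eqref{parabassnegb}--\eqref{parabassposb} confine the set $\{\Gamma^+=1\}\cap\{\Gamma^-=0\}$ to a bounded slab, so this reduction can be carried out uniformly in $h$ with a choice of $M_\beta$ that still diverges as $\beta\to 0$. Closing this dead-zone case is the delicate step of the argument.
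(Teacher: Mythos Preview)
Your route differs from the paper's: you lean on the strict-monotonicity lower bound \eqref{eq:bound from below} (the constants $l_M$), while the paper's short argument invokes the vanishing Lipschitz hypothesis \eqref{lipasspos} (the constants $L_M\to 0$), computing $\Gamma^\pm(x-hy)+\beta h-\Gamma^\pm(x)\ge(\beta-L_M)h$ in the far regions. (As written, that computation actually yields the \emph{opposite} inclusion $\{|x_1|\ge M_\beta\}\subset\Sigma^+_{h,y}\cap\Sigma^-_{h,y}$, so the paper's own proof is not clean here either.)

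The real problem is that your dead-zone case cannot be closed, and not because your sliding argument is merely incomplete: the lemma as literally stated is false on that configuration. Take the model obstacles $\Gamma^+(x)=[(x_1)_-]^{-\gamma_1}\wedge 1$ and $\Gamma^-(x)=(1-[(x_1)_+]^{-\gamma_2})\vee 0$. On the slab $\{-1<x_1<1\}$ one has $\Gamma^+\equiv 1$ and $\Gamma^-\equiv 0$ identically, hence $\Gamma^\pm(x)-\Gamma^\pm(x-yh)=0$ for small $h$, and every such $x$ lies in $\Sigma^+_{h,y}\cap\Sigma^-_{h,y}$ for \emph{every} $\beta>0$. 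There is no nearby level set $\partial\{\Gamma^+<1\}$ to slide to---$\Gamma^+$ is flat on a full neighbourhood---so your proposed reduction has nothing to grab onto, and no choice of $M_\beta\to\infty$ can contain this fixed slab. What is actually needed downstream in Lemma~\ref{unifmonotone2} is only that the region where $u^\delta_h$ falls strictly below $u^\delta(\cdot-yh)+\beta h$ is contained in $\{|x_1|\ge M_\beta\}$; that holds directly from the definition of $u^\delta_h$ and does not require the inclusion for $\Sigma^+_{h,y}\cap\Sigma^-_{h,y}$ as stated. Your instinct that the dead zone is the crux was exactly right; the fix is to reformulate the claim, not to patch the argument.
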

\begin{proof}
Let $y\in S^{N-1}$ be any direction in the assumed cone of monotonicity.  Consider first the obstacle $\Gamma^+$,
and for any $M>0$ let $L_M$ be given by (\ref{lipasspos}).  Then, if $x$ is such that $x_1<-M$ we have
\begin{align}
\Gamma^+(x-hy)+\beta h-\Gamma^+(x)&\geq \beta h-L_{M}h\notag
\end{align}
so that if $L_{M}\leq \beta$ the inclusion for $\Sigma^+_{h,y}$ follows.  For the obstacle $\Gamma^-$, if $x_1>2M$ and $h<M$ then
\begin{align}
\Gamma^-(x-hy)+\beta h-\Gamma^-(x)&\geq \beta h-L_{2M}h\notag
\end{align}
so that if $L_{2M}\leq \beta$ we have the inclusion for $\Sigma^-_{h,y}$.
Since $L_M\rightarrow 0$ as $M\rightarrow \infty$, for any fixed $\beta$ we may take $M$ large enough so that $L_M,L_{2M}\leq \beta$. Then the conclusion holds with $M_\beta=2M$.
\end{proof}

\begin{lemma}\label{unifmonotone2}
Let $\tilde{M}$ be given by Lemma \ref{obsbarriers}. Then $U$ is uniformly monotone in any direction of $C^+$ inside $\{|x_1|\leq \tilde{M}\}$. More precisely, there is a $\beta>0$ such that,
for any $x \in \{|x_1|\leq \tilde{M}\}$, there exists $h_x>0$ such that 
\begin{align}
U(x)+\beta h\leq U(x+yh)\qquad \forall \,h \in (0,h_x),\, y\in S^{N-1}\cap C^+.\notag
\end{align}
Hence, in the sense of distributions,
$$
D_y U(x)\geq \beta>0 \qquad \forall\,x \in \{|x_1|\leq \tilde{M}\},\, y\in S^{N-1}\cap C^+.
$$
\end{lemma}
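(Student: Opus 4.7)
The plan is to mimic the proof of Lemma \ref{unifmonotone}, with the boundary-layer growth estimates there replaced here by the strict monotonicity of the obstacles \eqref{eq:bound from below} and the localization provided by Lemma \ref{obssetlem}. Fix $x_0$ with $|x_{0,1}|\leq \tilde M$ and $y\in S^{N-1}\cap C^+$, and pick $u^\delta\in\mathcal{F}$ with $u^\delta(x_0)\geq U(x_0)-\delta$. Using Lemmas \ref{maxsubissub} and \ref{obsbarriers} (replacing $u^\delta$ by $u^\delta\vee P^+$ with $\mathcal{S}=\{x_1<-\tilde M\}$), Corollary \ref{cor:touch barriers}, and the translation argument of Proposition \ref{prop:monotone}, I may assume that $u^\delta\equiv \Gamma^+$ on $\{x_1<-\tilde M\}$, $u^\delta\equiv \Gamma^-$ on $\{x_1>\tilde M\}$, and $u^\delta$ is monotone along every direction of $C^+$.

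I would then introduce the shifted-and-clipped competitor
$$v^\delta(z)=u^\delta(z)\vee\Bigl[\bigl(u^\delta(z-yh)+\beta h\bigr)\wedge\Gamma^+(z)\Bigr],$$
which automatically satisfies $\Gamma^-\leq v^\delta\leq \Gamma^+$. The central claim is that for some universal $\beta>0$ depending only on $l_{2\tilde M}$ from \eqref{eq:bound from below}, and for $h$ small enough (depending on $\beta$ through Lemma \ref{obssetlem}), $v^\delta\in\mathcal{F}$. Granting this, $U\geq v^\delta$ gives
$$U(x_0+yh)\geq v^\delta(x_0+yh)\geq u^\delta(x_0)+\beta h\geq U(x_0)+\beta h-\delta,$$
and letting $\delta\to 0$ yields the pointwise monotonicity; the distributional bound $D_yU\geq \beta$ then follows at once.

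The verification that $v^\delta\in\mathcal{F}$ proceeds by case analysis on a test function $\phi$ touching $v^\delta$ from above at some $z_*$ with $v^\delta(z_*)>\Gamma^-(z_*)$. If $v^\delta(z_*)=u^\delta(z_*)$, the subsolution property of $u^\delta$ together with $v^\delta\geq u^\delta$ gives $\I\tilde v^\delta(z_*)\geq \I\tilde u^\delta(z_*)\geq 0$. Otherwise $v^\delta(z_*)=(u^\delta(z_*-yh)+\beta h)\wedge\Gamma^+(z_*)$; when the min is realized by $\Gamma^+(z_*)$, I would exploit that $\Gamma^+$ is $C^{1,1}$ on $\{\Gamma^+<1\}$ and is touched from below by paraboloids via \eqref{parabassposa}--\eqref{parabpos} to estimate $\I\tilde v^\delta(z_*)$ directly; when the min is realized by the shift, $\phi(\cdot+yh)-\beta h$ touches $u^\delta$ from above at $z_*-yh$, and strict monotonicity \eqref{eq:bound from below} with $\beta<l_{2\tilde M}$ guarantees $u^\delta(z_*-yh)>\Gamma^-(z_*-yh)$, so $u^\delta$ is a genuine subsolution there and
$$\I\tilde v^\delta(z_*)=\I\bigl[\tilde u^\delta(\cdot-yh)+\beta h\bigr](z_*)+\mathrm{Gain}-\mathrm{Loss}.$$

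The hard part is the gain--loss balance in this last subcase. The loss is supported in $\Sigma^+_{h,y}$, where the monotonicity of $\Gamma^+$ yields the pointwise bound $0\leq u^\delta(z-yh)+\beta h-\Gamma^+(z)\leq \beta h$; by Lemma \ref{obssetlem}, shrinking $\beta$ drives $\Sigma^+_{h,y}\cap\Sigma^-_{h,y}$ into $\{|x_1|\geq M_\beta\}$ with $M_\beta\to\infty$, so for $z_*$ in the strip the tail estimate gives $\mathrm{Loss}=O\bigl(\beta h(M_\beta-\tilde M)^{-2s}\bigr)$. The gain comes from the coincidence set $\{u^\delta=\Gamma^-\}$ (which covers a half-space by our normalization of $u^\delta$), where strict monotonicity \eqref{eq:bound from below} forces $u^\delta(z)-u^\delta(z-yh)\geq l_{2\tilde M}h$, contributing at least $(l_{2\tilde M}-\beta)h$ per point on a set of positive angular measure around any admissible direction. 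Choosing $\beta<l_{2\tilde M}/2$ and $h$ small, the gain dominates and $\I\tilde v^\delta(z_*)\geq 0$. The degenerate sub-cases (where $\Gamma^-$ vanishes near $z_*-yh$, or $\Gamma^+$ saturates at $1$) are handled by replacing the missing strict monotonicity with the paraboloidal touching assumptions \eqref{parabassposa}--\eqref{parabpos}, but the structure of the argument is unchanged.
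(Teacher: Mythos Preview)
Your overall strategy matches the paper's: normalize $u^\delta$ to coincide with $\Gamma^\pm$ outside $\{|x_1|\leq \tilde M\}$, form a shifted competitor, and show it remains in $\mathcal{F}$ by balancing a gain (from the strict obstacle monotonicity \eqref{eq:bound from below} on a slab where $u^\delta$ coincides with an obstacle) against a loss pushed far out via Lemma~\ref{obssetlem}. However, two steps in your execution do not go through as written.

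\textbf{Case 2a is not handled.} When the $\Gamma^+$-clip is active at the test point $z_*$ you appeal to \eqref{parabassposa}--\eqref{parabpos}, but those hypotheses are stated only for $|\tilde x_1|>M_0$ and say nothing about points in the central strip; moreover, even granting a touching paraboloid from below, Lemma~\ref{lemma:I parab} yields a \emph{negative} lower bound on $\I$, not a nonnegative one, so it does not give $\I\tilde v^\delta(z_*)\geq 0$. The paper avoids this case entirely by using a different competitor: on $\{|x_1|\leq M_\beta\}$ it takes the \emph{unclipped} maximum $u^\delta\vee\bigl(u^\delta(\cdot-yh)+\beta h\bigr)$, and forces the competitor to equal $\Gamma^\pm$ only on $\{|x_1|>M_\beta\}$. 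With that definition the case analysis at a test point reduces to ``$u^\delta$ wins the max'' or ``the shift wins the max,'' and no separate $\Gamma^+$-clip case ever appears.

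\textbf{The loss localization is mismatched.} You correctly observe that the loss is supported where the $\Gamma^+$-clip is active, hence in $\Sigma^+_{h,y}$, but you then quote Lemma~\ref{obssetlem}, which localizes only the \emph{intersection} $\Sigma^+_{h,y}\cap\Sigma^-_{h,y}$. The set $\Sigma^+_{h,y}$ alone contains, for instance, all of $\{\Gamma^+=1\}$, which need not lie in $\{|x_1|\geq M_\beta\}$, so your tail bound $O\bigl(\beta h(M_\beta-\tilde M)^{-2s}\bigr)$ is not justified. In the paper's piecewise competitor the modification relative to the pure shift occurs only on $\{|x_1|>M_\beta\}$ \emph{by construction}, so the loss is automatically supported there and the tail estimate is immediate.

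A harmless variation: you draw the gain from the right (where $u^\delta=\Gamma^-$), whereas the paper uses the slab $\{-\tilde M-1\leq x_1\leq -\tilde M\}$ where $u^\delta=\Gamma^+$; either choice works, since \eqref{eq:bound from below} applies symmetrically to both obstacles.
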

\begin{proof}
Thanks to assumption \eqref{eq:bound from below} it suffices to prove the result when $\Gamma^- < U < \Gamma^+$.

The proof is similar in spirit to the one of Lemma \ref{unifmonotone}.
Fix a point $x$ such that $\Gamma^-(x)<U(x)<\Gamma^+(x)$, and for any $\delta>0$ let $u^\delta\in\mathcal{F}$ be such that $U(x)\leq u^\delta(x)+\delta$.
Using Lemmas \ref{obsbarriers} and \ref{maxsubissub} (see also Corollary \ref{cor:touch barriers})
and by considering the maximum of two subsolutions, we may assume that $u^\delta$ coincides with $\Gamma^+$ for all $x$ such that $x_1<-\tilde{M}$.
Moreover, since $u^\delta \leq U$, by Corollary \ref{cor:touch barriers} it coincides with $\Gamma^-$ for all $x$ such that $x_1>\tilde{M}$.
Furthermore, as in the proof of Lemma \ref{unifmonotone}, up to replacing $u^\delta$ by $\sup_{y \in C^+} u^\delta (\cdot -y)\vee \Gamma^-$, we can assume 
that $u^\delta$ is monotone in all directions of $C^+$.
For any $\beta>0$, let $M_\beta$ be given by Lemma \ref{obssetlem}. We assume $\beta$ is sufficiently small so that the inclusion in Lemma \ref{obssetlem} holds with $M_\beta>\tilde{M}+1$.  

For $0<h<1$ and $y\in S^{N-1}$ in the assumed cone of monotonicity, consider
\begin{align}
u^\delta_h(x)=\left\{
\begin{array}{ll}
\Gamma^-(x) &\text{if } x\in \{x:x_1>M_\beta\},\\
\Gamma^+(x) &\text{if } x\in \{x:x_1<-M_\beta\},\\
u^\delta(x)\vee (u^\delta(x-yh)+\beta h) &\text{otherwise}.
\end{array}\right.\notag
\end{align}
This is the maximum of $u^\delta$ and ``$u^\delta$ shifted and raised'', then pushed above $\Gamma^-$ and below $\Gamma^+$.
We wish to show $u^\delta_h\in \mathcal{F}$ so we need to check $\I u^\delta_h(x)\geq 0$ when $u^\delta_h(x)\neq \Gamma^-(x)$.
Fix $x\in \mathbb{R}^N$ such that $\Gamma^-(x)<u^\delta_h(x)$ and let $\phi\in C^{1,1}(x)$ touch $u^\delta_h$ from above at $x$.
If $\phi$ touches $u^\delta$ from above, we use that $u^\delta$ is a subsolution and $u^\delta_h\geq u$ to get
$\I\tilde{u}^\delta_h(x)\geq \I\tilde{u}^\delta(x)\geq 0$ (here $\tilde{u}^\delta_h$ is the usual modification given by (\ref{subsupcut})).
In the other case, we assume $\nabla\phi(x)\neq 0$ (as indicated in the proof of Lemma \ref{unifmonotone} the case $\nabla\phi(x)=0$ is handled in a nearly identical way).
Then, 
\begin{align}
\I\tilde{u}^\delta_h(x)\geq \I\tilde{u}^\delta&(x-hy)-\int_{x+v\eta\in \Sigma^+_{h,y}\cap\Sigma^-_{h,y}} \frac{\beta h}{\eta^{1+2s}}\,d\eta\notag\\
&+\int_{x+v\eta\in \Lambda} \frac{u^\delta(x+v\eta)-u^\delta(x+v\eta-h)-\beta h}{\eta^{1+2s}}\,d\eta.\notag
\end{align}
Here $v\in S^{N-1}$ is the direction of $\nabla\phi(x)$ (which satisfies $v\cdot e_1 \geq C_\theta$, thanks to the monotonicity of $u^\delta$),
and $\Lambda$ is the set of values where $u^\delta(\cdot)\geq u^\delta(\cdot-h)-\beta h$.  

First, $\I\tilde{u}^\delta(x-hy)\geq 0$ since $u^\delta$ is a subsolution. So, we need to work on the remaining terms.
To estimate the loss term (second term on the right hand side) we use Lemma \ref{obssetlem} to see 
\begin{align}
\int_{x+v\eta\in \Sigma^+_{h,y}\cap\Sigma^-_{h,y}} \frac{\beta h}{\eta^{1+2s}}\,d\eta&\leq 2\beta h \int^\infty_{|x_1+M_\beta|/C_\theta}\eta^{-2s-1}\,d\eta
=\frac{C_\theta^{2s}\beta h |x_1+M_\beta|^{-2s}}{s}.\notag
\end{align}
To estimate the gain term (third term on right hand side), we note that, if $\beta \leq l_{\tilde{M}}$ and $x+v\eta \in \{ -\tilde{M}-1 \leq x_1 \leq -\tilde{M}\}$, then
\begin{align*}
u^\delta(x+v\eta)-u^\delta(x+v\eta-h)-\beta h &= \Gamma^+(x+v\eta)-\Gamma^+(x+v\eta-h)-\beta h\\
& \geq (l_{\tilde{M}} - \beta) h >0.
\end{align*}
Hence $\{ -\tilde{M}-1 \leq x_1 \leq -\tilde{M}\} \subset \Lambda$, and we get
\begin{align}
\int_{x+v\eta\in \Lambda} &\frac{u^\delta(x+v\eta)-u^\delta(x+v\eta-h)-\beta h}{\eta^{1+2s}}\,d\eta\notag\\
&\geq\int_{x+v\eta\in \{ -\tilde{M}-1 \leq x_1 \leq -\tilde{M}\}} \frac{(l_{\tilde{M}}-\beta) h}{\eta^{1+2s}}\,d\eta\notag\\
&\geq (l_{\tilde{M}}-\beta) h\left(\frac{C_\theta}{|x_1|+\tilde{M}+1}\right)^{1+2s}.\notag
\end{align}
Since $|x_1|<M$, it is clear that $\I\tilde{u}^\delta_h(x)\geq 0$ for $\beta$ sufficiently small.
\end{proof}


\subsection{Existence and Regularity}
This subsection contains the proofs of Theorems \ref{obstaclePDEthrm} and \ref{obstacleregthrm}.

\begin{proof}[Proof of Theorem \ref{obstaclePDEthrm}]
The first step is to check $U\in \mathcal{F}$.  Since $U$ may only be touched from below by a test function with non-zero derivative (as a consequence of Lemma \ref{unifmonotone2})
this is an immediate consequence of the stability Theorem \ref{stability}, arguing as in the proof of Theorem \ref{Usubsolnlemma}.

Next one checks $\I U(x)\leq 0$ whenever $U(x)<\Gamma^+(x)$.  This is argued similar to the proof of Theorem \ref{monoextthrm}: arguing by contradiction,
one touches $U$ from below at any point where it is not a supersolution, then raising the test function one finds another member of the set $\mathcal{F}$ which is strictly greater
than $U$ at some point.
Thus $U$ is a solution to (\ref{obstaclePDE}).

To prove the uniqueness of the solutions we first notice that, if $W$ is any other solution, then $W$ must also coincide with the obstacles for all $x$ with $|x_1|>\tilde{M}$,
where $\tilde{M}$ is given by Lemma \ref{obsbarriers}.  This follows through comparison on compact sets, using the barriers constructed in Lemma \ref{obsbarriers} with $\mathcal{S}=\{x\}$
(see the proof of Lemma \ref{obsbarriers} and Corollary \ref{cor:touch barriers}). From there, we can apply the monotone comparison principle in Theorem \ref{monocompthrm} to conclude uniqueness of the solution.
\end{proof}

\begin{proof}[Proof of Theorem \ref{obstacleregthrm}]
To prove the Lipschitz regularity it suffices to show that $W(x)=(U(x+y)-L_0|y|)\vee \Gamma^-(x)$ is a subsolution for any $y\in C^+$.
Indeed, recalling that both $\Gamma^+$ and $\Gamma^-$ are $L_0$-Lipschitz, this would imply
\begin{equation}
\label{eq:Lip}
U(x+y) \leq U(x)+L_0|y| \qquad \forall \,x \in \R^N, \, y \in C^+.
\end{equation}
Now, given any two points $x,z \in \R^N$, by a simple geometric construction one can always find a third point $w$ such that
$$
x-w,z-w \in C^+,\qquad |x-w|+|z-w| \leq A_\theta |x-z|,
$$
where $A_\theta\geq 1$ is a constant depending only on the opening of the cone $C^+$. (The point $w$ can be found as
the (unique) point of $(x+C^-)\cap(x+C^-)$ closest to both $x$ and $z$.)
So, by \eqref{eq:Lip} and the monotonicity of $U$ in directions of $C^+$ we get
$$
U(z) \leq U(w) + L_0|z-w| \leq U(x)+ A_\theta L_0  |x-z| \qquad \forall\, x,z\in \R^N,
$$
and the Lipschitz continuity follows.

The fact that $W \in \mathcal F$ follows as in many previous arguments by using that $U(x+y)-L_0h$ is a subsolution and that
$U(x+yh)-L_0h\leq\Gamma^+(x)$ for all $x\in\mathbb{R}^N$ (since $U \leq \Gamma^+$ and $\Gamma^+$ is $L_0$ Lipschitz).

We now examine how $U$ leaves the obstacle near the free boundary $\partial \{U=\Gamma^+\}$ (the case of $\Gamma^-$ is similar).
Consider a point $x\in\mathbb{R}^N$ such that $U(x)=\Gamma^+(x)$. Since $U$ solves \eqref{obstaclePDE} inside the set $\{U>\Gamma^-\}\supset \{U=\Gamma^+\}$, we know $\I U(x)\geq 0$.
Moreover, $\Gamma^+$ is a $C^{1,1}$ function touching $U$ from above at $x$, so $\I U(x)$ may be evaluated classically in the direction of $\nabla\Gamma^+(x)$
(see, for instance, \cite[Lemma 3.3]{MR2494809}).  Let $y\in S^{N-1}$ denote this direction. For any $r>0$, we have
\begin{align}
0&\leq\int_0^\infty\frac{U(x+\eta y)+U(x-\eta y)-2U(x)}{\eta^{1+2s}}\,d\eta\notag\\
&=\int_0^r \frac{U(x+\eta y)-\Gamma^+(x+\eta y)+U(x-\eta y)-\Gamma^+(x-\eta y)}{\eta^{1+2s}}\,dx\notag\\
&\ \ \ \ \ \ \ +\int_0^r\frac{\Gamma^+(x+\eta y)+\Gamma^+(x-\eta y)-2\Gamma^+(x)}{\eta^{1+2s}}\,d\eta\notag\\ 
&\ \ \ \ \ \ \ \ \ \ \ \ +\int_r^\infty\frac{U(x+\eta y)+U(x-\eta y)-2U(x)}{\eta^{1+2s}}\,d\eta.\notag
\end{align}
Since $U$ is bounded, $\Gamma^+$ is uniformly $C^{1,1}$ and monotone inside the set $\{\Gamma^+<1\}$, and $U\leq \Gamma^+$, we get
\begin{align}
0\leq \int_0^r \frac{\Gamma^+(x+\eta y)-U(x+\eta y)}{\eta^{1+2s}}\,dx \leq C\notag
\end{align}
for some constant $C<\infty$ independent of $r$.  Set $\delta= \Gamma^+(x+r y)-U(x+r y)$.
Since $\Gamma^+$ is $L_0$-Lipschitz, we have $\Gamma^+(x+\eta y)-U(x+ry)\geq \frac{\delta}{2}$ for all $\eta\in (r-\delta/(2L_0),r)$.  Then,
\begin{align}
\frac{\delta^2}{4L_0}\frac{1}{(r-\delta/(2L_0))^{1+2s}} &\leq \frac{\delta}{2} \int_{r-\delta/(2L_0)}^r \frac{1}{\eta^{1+2s}}\,d\eta \notag\\
&\leq \int_0^r \frac{\Gamma^+(x+\eta y)-U(x+\eta y)}{\eta^{1+2s}}\,dx\leq C.\notag
\end{align}
This implies $\delta^2\leq C'r^{1+2s}$ for some universal constant $C'$. Hence
\begin{align}
\Gamma^+(x+r y)-U(x+r y) =O(r^{1+(s-1/2)})\notag
\end{align}
as $r\rightarrow 0$, as desired.
\end{proof}

\section{Appendix}
\begin{figure}
\centerline{\epsfysize=4truein\epsfbox{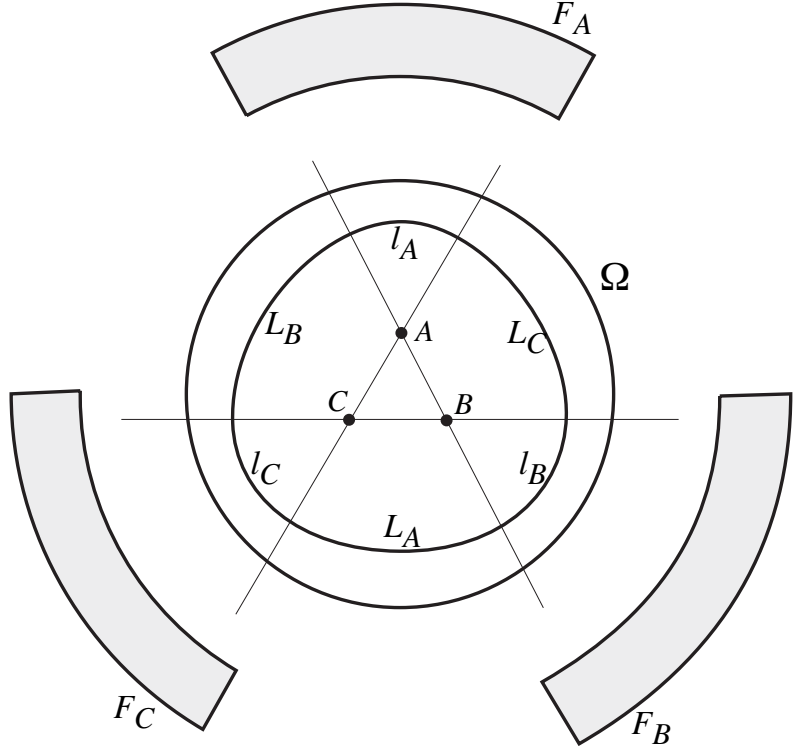}}
\caption{Geometry for non-uniqueness example.}
\label{figss}
\end{figure}
Figure \ref{figss} describes a simple bi-dimensional geometry for which $u\equiv 0$ is
a solution in the sense of
\eqref{altsub}-\eqref{altsup}, and we can also exhibit a positive subsolution.

In this figure, $A$, $B$, and $C$ form an equilateral triangle centered at the origin.
The curves $l_A$ and $L_A$ are both sections of a circle centered at $A$.
Likewise $l_B$, $L_B$, $l_C$ and $L_C$ are sections of a circle centered at $B$ and $C$ respectively.
The sets $F_A$, $F_B$, and $F_C$ are obtained by intersecting an annulus centered at the origin
with sectors of angle $\pi/3$, and they contain the support of the boundary data.
Notice that, for $\alpha=A,B,C$, any line perpendicular to either $L_\alpha$ or $l_\alpha$ passes through the interior of $F_\alpha$.
Finally, $\Omega$ is the unit disc centered at the origin.  

To define the data consider the smaller set $F_A^\delta= \{x\in F_A| d(x,\partial F_A)>\delta\}$.
For some small $\delta>0$ let $f_A$ be a smooth function equal to $1$ in
$F_A^{2\delta}$, and equal to $0$ in $\R^2 \setminus F_A^\delta$.  With a similar definition for $f_B$ and $f_C$, let $f=f_A+f_B+f_C$.  We choose $\delta$
small enough that any line perpendicular to $l_\alpha$ or $L_\alpha$ intersects $\{x|f_\alpha(x)=1\}$ ($\alpha=A,B,C$).  

Consider now the problem (\ref{PDE}).  We first note that $u\equiv 0$ is a solution in the sense of \eqref{altsub}-\eqref{altsup}.
Indeed, \eqref{altsub} trivially holds. Moreover, 
for any point $x\in \Omega$ there is a line passing through the point which does not intersect the support of $f$,
so also \eqref{altsup} is satisfied. We now construct a subsolution which is larger than this solution, showing that a comparison
principle cannot hold.

Let $S$ be the compact set whose boundary is given by the $C^{1,1}$ curve
$\bigcup_{\alpha\in \{A,B,C\}} \bigl(L_\alpha\cup l_\alpha\bigr)$.  For $\rho>0$ small, let
$\phi_\rho\in C_c^\infty(\mathbb{R})$ be such that $\phi(t)=0$ for $t\leq 0$ and $t=1$ for $t>\rho$.
Finally, define
\begin{align}
u(x)=\left\{\begin{array}{lrl}
\phi_\rho\bigl(d(x,\partial S)\bigr) &\text{if} &x\in S,\\ 
0&\text{if} &x\in S^C.
\end{array}\right.\notag
\end{align}

Since $\partial S$ is $C^{1,1}$, by choosing $\rho$ sufficiently small we can guarantee $u$ is $C^{1,1}$ as well.

Define now $u_\epsilon=\epsilon u$.
For any $x\in S$ such that $\nabla u_\epsilon(x)\neq 0$, the line with direction $\nabla u_\epsilon(x)$
passing through the point $x$ will intersect the set $\{x|f(x)=1\}$, and thus there will be a uniform
positive contribution to the integral defining the operator.
Analogously, for any point $x\in S$ such with $\nabla u(x)=0$
there is a line which intersects the set $\{x|f(x)=1\}$, so there will be again a uniform positive
contribution to the operator.

Hence, by choosing $\epsilon$ sufficiently small, we can guarantee
that this positive contribution outweighs any negative contribution coming from the local shape of $u_\epsilon$
(since this contribution will be of order $\epsilon$), and therefore $u$ is a subsolution both in the sense of
Definition \ref{subsupdefn} and of \eqref{altsub}-\eqref{altsup}.

\bibliographystyle{plain}
\bibliography{non-local-tug}
\end{document}